\numberwithin{equation}{section}
\numberwithin{equation}{section}
\theoremstyle{plain}
\newtheorem{theorem}{Theorem}[section]
\newtheorem{lemma}[theorem]{Lemma}
\theoremstyle{definition}
\theoremstyle{remark}
\newtheorem{remark}[theorem]{\bf{Remark}}
\def\DataSig{DataS{\i}g }
\newcommand{\cO}{\ensuremath{{\cal O}}}
\newcommand{\cU}{\ensuremath{{\cal U}}}
\newcommand{\m}{\ensuremath{{\cal M}}}
\newcommand{\n}{\ensuremath{{\cal N}}}
\newcommand{\cc}{\ensuremath{{\cal C}}}
\newcommand{\cd}{\ensuremath{{\cal D}}}
\newcommand{\cf}{\ensuremath{{\cal F}}}
\newcommand{\cj}{\ensuremath{{\cal J}}}
\newcommand{\cl}{\ensuremath{{\cal L}}}
\newcommand{\cH}{\ensuremath{{\cal H}}}
\newcommand{\cX}{\ensuremath{{\cal X}}}
\newcommand{\al}{\alpha}
\newcommand{\be}{\beta}
\newcommand{\de}{\delta}
\renewcommand{\th}{\theta}
\newcommand{\vph}{\varphi}
\newcommand{\ep}{\varepsilon}
\newcommand{\R}{\ensuremath{{\mathbb R}}}
\newcommand{\N}{\ensuremath{{\mathbb N}}}
\newcommand{\B}{\ensuremath{{\mathbb B}}}
\newcommand{\Z}{\ensuremath{{\mathbb Z}}}
\newcommand{\bbP}{\ensuremath{{\mathbb P}}}
\newcommand{\ovB}{\ensuremath{{ \overline{ \mathbb B }}}}
\newcommand{\bx}{{\bf x}}
\newcommand{\by}{{\bf y}}
\newcommand{\bolde}{{\bf e}}
\newcommand{\bA}{{\bf A }}
\newcommand{\bI}{{\bf I}}
\newcommand{\bII}{{\bf II}}
\DeclareMathOperator{\dist}{dist}
\DeclareMathOperator{\Span}{Span}
\DeclareMathOperator{\ovSpan}{\overline{Span}}
\DeclareMathOperator{\Reach}{Reach}
\DeclareMathOperator{\argmax}{argmax}
\DeclareMathOperator{\argmin}{argmin}
\DeclareMathOperator{\pack}{pack}
\DeclareMathOperator{\cov}{cov}
\newcommand{\support}{{\rm support}}
\newcommand{\beq}{\begin{equation}}
\newcommand{\eeq}{\end{equation}}
\newcommand{\beqa}{\begin{equation}\begin{aligned}}
\newcommand{\eeqa}{\end{aligned}\end{equation}}
\newcommand{\brmk}{\begin{rmk}}
\newcommand{\ermk}{\end{rmk}}
\newcommand{\partref}[1]{\hbox{(\csname @roman\endcsname{\ref{#1}})}}
\newcommand{\conv}{{\mathrm{conv}}}
\newcommand{\train}{{\mathrm{train}}}
\newcommand{\test}{{\mathrm{test}}}
\newcommand{\f}{\ensuremath{{\cal F}}}
\newcommand{\twopartdef}[4]
{\setstretch{1.5}
	\left\{
		\begin{array}{ll}
			#1 & \mbox{if } #2 \\ 
			#3 & \mbox{if } #4
		\end{array}
	\right.
}
\def\dual#1{\expandafter\dual@aux#1\@nil}
\def\dual@aux#1/#2\@nil{\begin{tabular}{@{}c@{}}#1\\#2\end{tabular}}
\def\three#1{\expandafter\three@aux#1\@nil}
\def\three@aux#1/#2/#3\@nil{\begin{tabular}{@{}c@{}c@{}}#1\\#2\\#3\end{tabular}}
\def\four#1{\expandafter\four@aux#1\@nil}
\def\four@aux#1/#2/#3/#4\@nil{\begin{tabular}{@{}c@{}c@{}c@{}}#1\\#2\\#3\\#4\end{tabular}}
\title{Greedy Recombination Interpolation Method (GRIM)}
\author{Terry Lyons and Andrew D. McLeod}
\date{\today}
\begin{document}
\usetagform{red}
\maketitle

\begin{abstract}
In this paper we develop the \emph{Greedy 
Recombination Interpolation Method} (GRIM) for 
finding sparse approximations of functions initially 
given as linear combinations of some (large) number
of simpler functions. In a similar spirit to the 
\emph{CoSaMP} algorithm, GRIM combines
dynamic growth-based interpolation techniques and 
thinning-based reduction techniques.
The dynamic growth-based aspect is a modification of the 
greedy growth utilised in the 
\emph{Generalised Empirical Interpolation Method} (GEIM). 
A consequence of the modification is that our growth is not
restricted to being one-per-step as it is in GEIM.
The thinning-based aspect is carried out by \emph{recombination},
which is the crucial component of the recent ground-breaking 
\emph{convex kernel quadrature} method.
GRIM provides the first use of recombination outside the 
setting of reducing the support of a measure.
The sparsity of the approximation found by GRIM is controlled
by the geometric concentration of the data in a sense that is 
related to a particular \emph{packing number} of the data.
We apply GRIM to a kernel quadrature task 
for the radial basis function kernel, 
and verify that its performance matches that of other
contemporary kernel quadrature techniques.
\end{abstract}

{\small \tableofcontents}

\section{Introduction}
\label{intro}
This article considers finding sparse approximations of functions with the aim of reducing computational 
complexity. Applications of sparse representations are wide ranging and include the theory of compressed 
sensing \cite{DE03,CT05,Don06,CRT06}, image processing \cite{EMS08,BMPSZ08,BMPSZ09}, facial recognition 
\cite{GMSWY09}, data assimilation \cite{MM13}, explainability \cite{FMMT14,DF15}, 
sensor placement in nuclear reactors \cite{ABGMM16,ABCGMM18}, reinforcement learning 
\cite{BS18,HS19}, DNA denoising \cite{KK20}, and inference acceleration within machine learning \cite{ABDHP21,NPS22}.

Loosely speaking, the commonly shared aim of sparse
approximation problems is to approximate a 
complex system using only a few elementary features.
In this generality, the problem is commonly tackled 
via \emph{Least Absolute Shrinkage and 
Selection Operator (LASSO)} regression, 
which involves solving a minimisation problem under 
$l^1$-norm constraints. 
Constraining the $l^1$-norm is a convex 
relaxation of constraining the $l^0$-pseudo-norm, which 
counts the number of non-zero components of a vector.
Since $p=1$ is the smallest positive real number for which
the $l^p$-norm is convex, constraining the $l^1$-norm can be
viewed as the best convex approximation of 
constraining the $l^0$-pseudo-norm (i.e. constraring the 
number of non-zero components).
Implementations of LASSO techniques
can lead to near optimally 
sparse solutions in certain contexts \cite{DET06,Tro06,Ela10}.
The terminology "LASSO" 
originates in \cite{Tib96}, though imposing $l^1$-norm
constraints is considered in the earlier works
\cite{CM73,SS86}. More recent LASSO-type techniques 
may be found in, for example,
\cite{LY07,DGOY08,GO09,XZ16,TW19}.

Alternatives to LASSO include the
\emph{Matching Pursuit} (MP) sparse approximation 
algorithm introduced in \cite{MZ93} and the 
\emph{CoSaMP} algorithm introduced in \cite{NT09}. 
The MP algorithm greedily 
grows a collection of non-zero weights one-by-one 
that are used to construct the approximation.
The CoSaMP algorithm operates in a similar greedy manner
but with the addition of two key properties. 
The first is that the non-zero weights are no longer 
found one-by-one; groups of several non-zero weights
may be added at each step. 
Secondly, CoSaMP incorporates a naive prunning procedure
at each step. After adding in the new weights at a step, 
the collection is then pruned down by retaining only the
$m$-largest weights for a chosen integer $m \in \Z_{\geq 1}$.
The ethos of CoSaMP is particularly notworthy for our
purposes.

In this paper we assume that the system 
of interest is known to be a linear combination of some 
(large) number of features. Within this setting the 
goal is to identify a linear combination of a strict 
sub-collection of the features (i.e. not \emph{all} 
the features) that gives, in some sense, a good 
approximation of the system (i.e. of the original
given linear combination of all the features).
Depending on the particular context considered, 
techniques for finding such sparse approximations 
include the pioneering \emph{Empirical Interpolation Method} 
\cite{BMNP04,GMNP07,MNPP09}, its subsequent generalisation 
the \emph{Generalised Empirical Interpolation Method} (GEIM)
\cite{MM13,MMT14}, \emph{Pruning} 
\cite{Ree93,AK13,CHXZ20,GLSWZ21}, \emph{Kernel Herding}
\cite{Wel09a,Wel09b,CSW10,BLL15,BCGMO18,TT21,PTT22}, 
\emph{Convex Kernel Quadrature} \cite{HLO21}, and
\emph{Kernel Thinning} \cite{DM21a,DM21b,DMS21}.
The LASSO approach based on $l^1$-regularisation can
still be used within this framework.

It is convenient for our purposes to consider the 
following loose categorisation of techniques for finding 
sparse approximations in this setting;
those that are \emph{growth-based}, and those that 
are \emph{thinning-based}.  
Growth-based methods seek to inductively increase
the size of a sub-collection of features, until 
the sub-collection is rich enough to well-approximate
the entire collection of features.
Thinning-based methods seek to inductively identify
features that may be discarded without significantly 
affecting how well the remaining features can 
approximate the original entire collection.
Of the techniques mentioned above, MP, EIM, GEIM and 
Kernel Herding are growth-based, whilst LASSO, 
Convex Kernel Quadrature and 
Kernel Thinning are thinning-based.
An important observation is that the CoSaMP algorithm 
combines both growth-based and thinning-based techniques.

In this paper we work in the setting considered by Maday et 
al. when introducing GEIM \cite{MM13,MMT14,MMPY15}.
Namely, we let $X$ be a real Banach space and 
$\n \in \Z_{\geq 1}$ be a (large) positive integer. Assume that 
$\cf = \{ f_1 , \ldots , f_{\n} \} \subset X$ is a collection
of non-zero elements, and that $a_1 , \ldots , a_{\n} \in \R 
\setminus \{0\}$. Consider the element $\vph \in X$ defined 
by $\vph := \sum_{i=1}^{\n} a_i f_i$. 
Let $X^{\ast}$ denote the dual of $X$ and suppose that
$\Sigma \subset X^{\ast}$ is a finite subset with
cardinality $\Lambda \in \Z_{\geq 1}$. 
We use the terminology that the set $\cf$ consists of the
\emph{features} whilst the set $\Sigma$ consists of \emph{data}.
Then we consider the following sparse approximation problem.
Given $\ep > 0$, find an element 
$u = \sum_{i=1}^{\n} b_i f_i\in \Span(\cf) \subset X$ 
such that the cardinality of the set 
$\{ i \in \{1, \ldots , \n \} : b_i \neq 0 \}$ is 
\emph{less} than $\n$ and that $u$ is close to $\vph$ 
throughout $\Sigma$ in the sense that, for every 
$\sigma \in \Sigma$, we have $|\sigma(\vph-u)| \leq \ep$.

The tasks of approximating 
sums of continuous functions on finite subsets of Euclidean 
space, cubature for empirical measures, and kernel quadrature 
for kernels defined on finite sets are included as 
particular examples within this general mathematical 
framework (see Section \ref{Motivation}) for full details).
The inclusion of approximating a linear combination of continuous
functions within this general framework ensures that several 
machine learning related tasks are included.
For example, each layer of a neural network is typically 
given by a linear combination of continuous functions.
Hence the task of approximating a layer within a neural network
by a sparse layer is covered within this general framework.
Observe that there is no requirement that the original layer
is fully connected; in particular, it may itself already be 
a sparse layer.
Consequently, any approach to the approximation problem 
covered in the general framework above could be used to carry 
out the reduction step (in which a fixed proportion of a models 
weights are reduced to zero) in the recently proposed 
\emph{sparse training} algorithms 
\cite{GLMNS18,EJOPR20,LMPY21,JLPST21,FHLLMMPSWY23}.

The GEIM approach developed by Maday et al. 
\cite{MM13,MMT14,MMPY15} involves dynamically growing 
a subset $F \subset \cf$ of the features and a subset 
$L \subset \Sigma$ of the date. At each step a new 
feature from $\cf$ is added to $F$, and a new piece of 
data from $\Sigma$ is added to $L$. An approximation of 
$\vph$ is then constructed via the following interpolation 
problem. Find a linear combination of the 
elements in $F$ that coincides with $\vph$
throughout the subset $L \subset \Sigma$.
This dynamic growth is \emph{feature-driven} in the following 
sense. The new feature to be added to $F$ is determined, 
and this new feature is subsequently used to determine 
how to extend the collection $L$ of linear functionals on 
which an approximation will be required to match the target.

The growth is greedy in the sense that the element chosen
to be added to $F$ is, in some sense, the one worst 
approximated by the current collection $F$. 
If we let $f \in \cf$ be the newly selected feature 
and $J[f]$ be the linear combination of the previously 
selected features in $F$ that coincides with $f$ on $L$, 
then the element to be added to $L$ is the one achieving the 
maximum absolute value when acting on $f - J[f]$.
A more detailed overview of GEIM can be found in 
Section \ref{Motivation} of this paper; full details of 
GEIM may be found in \cite{MM13,MMT14,MMPY15}.

Momentarily restricting to the particular task of kernel 
quadrature, the recent thinning-based 
\emph{Convex Kernel Quadrature} approach proposed by 
Satoshi Hayakawa, the first author, and Harald Oberhauser 
in \cite{HLO21} achieves out performs existing techniques 
such as Monte Carlo, Kernel Herding 
\cite{Wel09a,Wel09b,CSW10,BLL15,BCGMO18,PTT22}, 
Kernel Thinning \cite{DM21a,DM21b,DMS21}
to obtain new state-of-the-art results.
Central to this approach is the \emph{recombination}
algorithm \cite{LL12,Tch15,ACO20}.
Originating in \cite{LL12} as a technique 
for reducing the support of a probability 
measure whilst preserving a specified list
of moments, at its core recombination is a method of
reducing the number of non-zero components in a 
solution of a system of linear equations whilst preserving 
convexity. 

An improved implementation of recombination is 
given in \cite{Tch15} that is significantly more efficient
than the original implementation in \cite{LL12}.
A novel implementation of the method from \cite{Tch15} was 
provided in \cite{ACO20}. A modified variant of the 
implementation from \cite{ACO20} is used in the convex
kernel quadrature approach developed in \cite{HLO21}.
A method with the same complexity as the original 
implementation in \cite{LL12} was introduced in 
the works \cite{FJM19,FJM22}.

Returning to our general framework, we develop the 
\emph{Greedy Recombination Interpolation Method} (GRIM)
which is a novel hybrid combination of the dynamic growth of 
a greedy selection algorithm, in a similar spirit to GEIM 
\cite{MM13,MMT14,MMPY15},
with the thinning reduction of recombination that underpins 
the successful convex kernel quadrature approach of \cite{HLO21}.
GRIM dynamically grows a collection of linear functionals 
$L \subset \Sigma$. After each extension of $L$, we apply 
recombination to find an approximation of $\vph$ that 
coincides with $\vph$ throughout $L$ (cf. 
the \emph{recombination thinning} Lemma 
\ref{Banach_recombination_lemma}). 
Subsequently, for a chosen integer $m \in \Z_{\geq 1}$, 
we extend $L$ by adding the $m$ linear functionals from 
$\Sigma$ achieving the $m$ largest absolute values 
when applied to the difference between $\vph$ and the 
current approximation of $\vph$ 
(cf. Section \ref{Banach_GRIM_alg}).
We inductively repeat these steps a set number of times.
Evidently GRIM is 
a hybrid of growth-based and thinning-based techniques 
in a similar spirit to the CoSaMP algorithm \cite{NT09}. 

Whilst the dynamic growth of GRIM is in a similar spirit to 
that of GEIM, there are several important distinctions between
GRIM and GEIM.
The growth in GRIM is \emph{data-driven} rather than
\emph{feature-driven}. 
The extension of the data to be interpolated with respect to 
in GRIM does \emph{not} involve making any choices of features
from $\cf$. The new information to be matched is determined
by examining where in $\Sigma$ the current approximation is 
furthest from the target $\vph$
(cf. Section \ref{Banach_GRIM_alg}).

Expanding on this point, 
we only dynamically grow a subset $L \subset \Sigma$ 
of the data and do \emph{not} grow a subset $F \subset \cf$ of 
the features. 
In particular, we do \emph{not} pre-determine the features 
that an approximation will be a linear combination of.
Instead, the features to be used are determined by 
recombination (cf. the \emph{recombination thinning}
Lemma \ref{Banach_recombination_lemma}).
Indeed, besides an upper bound on the number of features 
being used to construct an approximation 
(cf. Section \ref{Banach_GRIM_alg}), we have \emph{no}
control over the features used. 
Allowing recombination and the data $\Sigma$ to determine 
which features are used means there is no requirement to 
use the features from one step at subsequent steps.
There is no requirement that \emph{any} of the features
used at one specific step must be used in any of the subsequent
steps.

GRIM is \emph{not} 
limited to extending the subset $L \subset \Sigma$ by a single
linear functional at each step. 
GRIM is capable of extending $L$ by $m$ linear functionals 
for any given integer $m \in \Z_{\geq 1}$ (modulo 
restrictions to avoid adding more linear functionals 
than there are in the original subset $\Sigma \subset X^{\ast}$
itself).
The number of new linear functionals to be added at a step 
is a hyperparameter that can be optimised during numerical
implementations.

Unlike \cite{HLO21}, our use of recombination is not 
restricted to the setting of reducing the support of a 
discrete measure. 
After each extension of $L \subset \Sigma$, 
we use recombination 
\cite{LL12,Tch15,ACO20} to find an element $u \in \Span(\cf)$ 
satisfying that $u \equiv \vph$ throughout $L$.
Recombination is applied to the linear system determined 
by the combination of the 
set $\{ \sigma(\vph) : \sigma \in L \}$, for a given 
$\sigma \in L$ we get the equation
$\sum_{i=1}^{\n} a_i \sigma(f_i) = \sigma(\vph)$, 
and the sum of the coefficients $a_1 , \ldots , a_{\n}$ 
(i.e. the trivial equation 
$a_1 + \ldots + a_{\n} = \sum_{i=1}^{\n} a_i$) 
(cf. the \emph{recombination thinning}
Lemma \ref{Banach_recombination_lemma}).

Our use of recombination means, in particular, that 
GRIM can be used for cubature and kernel quadrature 
(cf. Sections \ref{Motivation} and \ref{Banach_GRIM_alg}).
Since recombination preserves convexity \cite{LL12},
the benefits of convex weights enjoyed by the convex kernel 
quadrature approach in \cite{HLO21} are inherited 
by GRIM (cf. Section \ref{Motivation}).

Moreover, at each step we optimise our use of recombination
over multiple permutations of the orderings of the 
equations determining the linear system to which 
recombination is applied (cf. the 
\textbf{Banach Recombination Step} in Section 
\ref{Banach_GRIM_alg}). 
The number of permutations to be considered at each step 
gives a parameter that may be optimised during applications 
of GRIM.

Whilst we analyse the complexity cost of the 
\textbf{Banach GRIM} algorithm 
(cf. Section \ref{complexity_cost_subseq}), computational 
efficiency is not our top priorities. 
GRIM is designed to be a one-time tool; it is applied a 
single time to try and find a sparse approximation of the
target system $\vph \in X$.
The cost of its implementation is then recouped through 
the repeated use of the resulting approximation for 
inference via new inputs.
Thus GRIM is ideally suited for use in cases where the 
model will be repeatedly computed on new inputs for the 
purpose of inference/prediction.
Examples of such models include, in particular, those 
trained for character recognition, medical diagnosis, 
and action recognition.

With this in mind, the primary aim of our complexity cost 
considerations is to verify that implementing GRIM is 
feasible. We verify this by proving that, at worst, the 
complexity cost of GRIM is 
$\cO(\Lambda^2 \n + s\Lambda^4 \log(\n) )$ where 
$\n$ is the number of features in $\cf$, $\Lambda$ is the 
number of linear functionals forming the data $\Sigma$, 
and $s$ is the maximum number of shuffles considered during 
each application of recombination
(cf. Lemma \ref{GRIM_cost_lemma}).

The remainder of the paper is structured as follows. 
In Section \ref{Motivation} we fix the mathematical 
framework that will be used throughout the 
article and motivate its consideration. In particular, 
we illustrate some specific examples covered by our framework.
Additionally, we summarise the GEIM approach of Maday et al. 
\cite{MM13,MMT14,MMPY15} and highlight the fundamental 
differences in the philosophies underlying GEIM and GRIM.

An explanation of the recombination algorithm and its use 
within our setting is provided in subsection \ref{recomb_thin}.
In particular, we prove the 
\emph{Recombination Thinning} Lemma 
\ref{Banach_recombination_lemma} detailing our use of 
recombination to find $u \in \Span(\cf)$ coinciding with 
$\vph$ on a given finite subset $L \subset X^{\ast}$.

The \textbf{Banach GRIM} algorithm is both presented and 
discussed in Section \ref{Banach_GRIM_alg}. 
We formulate the \textbf{Banach Extension Step}
governing how we extend an existing collection of linear
functionals $L \subset \Sigma$, 
the \textbf{Banach Recombination Step} specifying how we 
use the \emph{recombination thinning} Lemma 
\ref{Banach_recombination_lemma} in the \textbf{Banach GRIM} 
algorithm, 
and provide an upper bounds for the number of 
features used to construct the approximation at each step.

The complexity cost of the \textbf{Banach GRIM} algorithm 
is considered in Section \ref{complexity_cost_subseq}. 
We prove Lemma \ref{GRIM_cost_lemma} establishing the 
complexity cost of any implementation of the 
\textbf{Banach GRIM} algorithm, and subsequently establish 
an upper bound complexity cost for the most expensive 
implementation.

The theoretical performance of the \textbf{Banach GRIM} 
algorithm is considered in Section \ref{conv_gen}.
Theoretical guarantees in terms of a specific 
geometric property
of $\Sigma$ are established for the \textbf{Banach GRIM} 
algorithm in which a single new linear functional is chosen 
at each step (cf. the \textbf{Banach GRIM Convergence} 
Theorem \ref{banach_conv}).
The specific geometric property is related to a particular
\emph{packing number} of $\Sigma$ in $X^{\ast}$ 
(cf. Subsection \ref{Main_Theoretical_Result}).
The packing number of a subset of a Banach space is closely
related to the covering number of the subset.
Covering and packing numbers, first studied by Kolmogorov
\cite{Kol56}, arise in a variety of contexts including 
eigenvalue estimation \cite{Car81,CS90,ET96}, 
Gaussian Processes \cite{LL99,LP04}, and machine learning 
\cite{EPP00,SSW01,Zho02,Ste03,SS07,Kuh11,MRT12,FS21}

In Section \ref{numerical} we compare the performance of GRIM
against other reduction techniques on three tasks.
The first is an $L^2$-approximation 
task motivated by an example in \cite{MMPY15}. 
The second is a kernel 
quadrature task using machine learning datasets as 
considered in \cite{HLO21}.
In particular, we illustrate that GRIM matches the 
performance of the tailor-made \emph{convex kernel quadrature}
technique developed in \cite{HLO21}.
The third task is approximating the action recognition model from 
\cite{JLNSY17} for the purpose of inference acceleration.
In particular, this task involves approximating a function 
in a pointwise sense that is outside the Hilbert space 
framework of the proceeding two examples.
\vskip 4pt
\noindent 
\emph{Acknowledgements}: This work was supported by 
the \DataSig Program under the EPSRC grant 
ES/S026347/1, the Alan Turing Institute under the
EPSRC grant EP/N510129/1, the Data Centric Engineering 
Programme (under Lloyd's Register Foundation grant G0095),
the Defence and Security Programme (funded by the UK 
Government) and the Hong Kong Innovation and Technology 
Commission (InnoHK Project CIMDA). 
This work was funded by the Defence and Security
Programme (funded by the UK Government).

\section{Mathematical Framework \& Motivation}
\label{Motivation}
In this section we rigorously formulate the sparse 
approximation problem that GRIM will be designed to tackle. 
Further, we briefly summarise the 
\emph{Generalised Empirical Interpolation Method} (GEIM) 
of Maday et al. \cite{MM13,MMT14,MMPY15} 
to both highlight some of the ideas we utilise in GRIM, 
and to additionally highlight the key novel properties 
\emph{not} satisfied by GEIM that \emph{will} be 
satisfied by GRIM. 
We first fix the mathematical framework in which 
we will work for the remainder of this paper.
 
Let $X$ be a Banach space and 
$\n \in \Z_{>0}$ be a (large) positive integer.
Assume that $\cf = \{ f_1 , \ldots , f_{\n} \}
\subset X$ is a collection of non-zero elements, 
and that $a_1 , \ldots , a_{\n} \in \R \setminus \{0\}$.
Consider the element $\vph \in X$ defined by
\beq
    \label{intro_varphi_def}
        \vph = \sum_{i=1}^{\n} a_i f_i.
\eeq
Let $X^{\ast}$ denote the dual of $X$ and  
suppose that $\Sigma \subset X^{\ast}$ is a finite 
subset of cardinality $\Lambda \in \Z_{\geq 1}$. 
We use the terminology that 
the set $\cf$ consists of \emph{features} whilst
the set $\Sigma$ consists of \emph{data}.
We consider the following sparse 
approximation problem. Given $\ep > 0$, find an 
element $u = \sum_{i=1}^{\n} b_i f_i \in \Span(\cf)$ 
such that the cardinality of the set 
$\left\{ i \in \{1 ,\ldots ,\n\} ~:~ b_i \neq 0\right\}$
is \emph{less} than $\n$ and that $u$ 
is close to $\vph$ throughout $\Sigma$ in the sense 
that, for every $\sigma \in \Sigma$, we have
$|\sigma(\vph-u)| \leq \ep$.

The task of finding a sparse approximation 
of a sum of continuous functions, defined on a  finite set 
of Euclidean space, is within this framework.
More precisely, let $N , d , E \in \Z_{\geq 1}$,
$a_1 , \ldots , a_N \in \R$,
$\Omega \subset \R^d$ be a finite subset,
and, for each $i \in \{1 , \ldots , N\}$, 
$f_i \in C^{0}\left(\Omega;\R^E\right)$ be a continuous 
function $\Omega \to \R^E$.
Then finding a sparse approximation of the continuous 
function $F := \sum_{i=1}^N a_i f_i$ is within this framework.
To see this, first let $e_1 , \ldots , e_E \in \R^E$ be the standard basis of 
$\R^E$ that is orthonormal with respect to the Euclidean dot product 
$\left< \cdot , \cdot \right>_{\R^E}$ on $\R^E$.
Then note that for each point $p \in \Omega$ and every $j \in \{1, \ldots , E\}$ 
the mapping $f \mapsto \left< f(p) , e_j \right>_{\R^E}$ determines a linear 
functional $C^0(\Omega;\R^E) \to \R$ that is in the dual space $C^0(\Omega;\R^E)^{\ast}$.
Denote this linear functional by $\de_{p,j} : C^0(\Omega;\R^E) \to \R$.
Therefore by choosing $X := C^0(\Omega;\R^E)$, $\n := N$, and 
$\Sigma := \left\{ \de_{p,j} : p \in \Omega ~\text{and}~ j \in \{1, \ldots , E\} \right\} \subset X^{\ast}$,
we see that this problem is within our framework.
Here we are also using the observation that if $f,h \in C^0(\Omega;\R^E)$ satisfy, for every 
$p \in \Omega$ and every $j \in \{1, \ldots , E\}$, that $|\de_{p,j}[f-h]| \leq \ep$, then 
we have $||f-h||_{C^0(\Omega;\R^E)} \leq C \ep$ for a constant $C \geq 1$ depending  
on the particular norm chosen on $\R^E$.
Thus finding an approximation $u$ of $F$ that satisfies, for every $\sigma \in \Sigma$, that 
$|\sigma(F-u)| \leq \ep/C$ allows us to conclude that $||F-u||_{C^0(\Omega;\R^E)} \leq \ep$.

The \emph{cubature problem} \cite{Str71} for empirical measures,
which may be combined with sampling to offer an approach to 
the \emph{cubature problem} for general signed measures,
is within this framework. 
More precisely, let $N \in \Z_{\geq 1}$, $a_1 , \ldots , a_N > 0$, 
$\Omega = \{x_1 , \ldots , x_N\} \subset \R^d$, 
and $\m[\Omega]$ denote the collection of finite 
signed measures on $\Omega$. 
Recall that $\m[\Omega]$ can be viewed as a subset of 
$C^0(\Omega)^{\ast}$ by defining, for $\nu \in \m[\Omega]$
and $\psi \in C^0(\Omega)$, 
$\nu[\psi] := \int_{\Omega} \psi(x) d\nu(x)$.
Consider the empirical measure
$\mu := \sum_{i=1}^N a_i \de_{x_i}$ and, for 
$e = (e_1 , \ldots , e_d) \in \Z_{\geq 0}^d$, 
define $p_{e} : \R^d \to \R$ by 
$p_{e}(x_1 , \ldots , x_d) := x_1^{e_1} \ldots x_d^{e_d}$.
Then the choices that $X := \m[\Omega]$, 
$\n := N$, and, for a given $K \in \Z_{\geq 0}$, 
$\Sigma := \left\{ p_e : e = (e_1 , \ldots , e_d) \text{ with } 
e_1 + \ldots + e_d \leq K \right\} 
\subset C^{0}(\Omega) \subset \m[\Omega]^{\ast}$ 
illustrate that the \emph{cubature problem} of 
reducing the support of $\mu$
whilst preserving its moments of order no greater than $K$ 
is within our framework. 

Moreover the \emph{Kernel Quadrature} problem for empirical
probability measures is within our framework.
To be more precise, let $\cX$ be a finite set and $\cH_{k}$ is 
a \emph{Reproducing Kernel Hilbert Space} (RKHS) associated 
to a positive semi-definite symmetric kernel function
$k : \cX \times \cX \to \R$
(appropriate definitions can be found, for example, in 
\cite{BT11}).
In this case $\cH_k = \Span \left( 
\left\{ k_x : x \in \cX \right\} \right)$ 
where, for $x \in \cX$, the function $k_x : \cX \to \R$ 
is defined by $k_x(z) := k(x,z)$ (see, for example, 
\cite{BT11}). 

In this context one can consider the \emph{Kernel Quadrature}
problem, for which the Kernel Herding 
\cite{Wel09a,Wel09b,CSW10,BLL15,BCGMO18,TT21,PTT22}, 
Convex Kernel Quadrature \cite{HLO21} and Kernel Thinning 
\cite{DM21a,DM21b,DMS21} methods have been developed.
Given a probability measure $\mu \in \bbP[\cX]$ the 
\emph{Kernel Quadrature} problem 
involves finding, for some $n \in \Z_{\geq 1}$, 
points $z_1 , \ldots , z_n \in \cX$ and weights 
$w_1 , \ldots , w_n \in \R$ and such that the measure 
$\mu_n := \sum_{j=1}^n w_j \de_{z_j}$ approximates 
$\mu$ in the sense that, for every $f \in \cH_k$, we have
$\mu_n(f) \approx \mu(f)$. 
Additionally requiring the weights $w_1 , \ldots , w_n$
to be \emph{convex} in the sense that they are all positive 
and sum to one 
(i.e. $w_1 , \ldots , w_n > 0$ and $w_1 + \ldots + w_n = 1$)
ensures both better robustness properties for the 
approximation $\mu_n \approx \mu$ and better estimates 
when the $m$-fold product of quadrature formulas is 
used to approximate $\mu^{\otimes m}$ on $\cX^{\otimes m}$; 
see \cite{HLO21}.

A consequence of 
$\cH_k = \Span \left( \left\{ k_x : x \in \cX \right\} \right)$ 
is that
linearity ensures that this approximate equality will be 
valid for all $f \in \cH_k$ provided it is true for every
$f \in \{ k_x : x \in \cX \}$.
The inclusion
$\cH_k \subset C^0(\cX)$ means that the subset 
$\{ k_x : x \in \cX \} \subset \cH_k$ can be viewed as a 
finite subset of the dual space $\m[\cX]^{\ast}$. 
The choice of $X := \m[\cX]$ and
$\Sigma := \{ k_x : x \in \cX \}$ illustrates that the 
\emph{Kernel Quadrature} problem for empirical probability
distributions $\mu \in \bbP[\cX]$ is within the framework 
we consider.

The framework we consider is the same as the setup for which 
GEIM was developed by Maday et al. \cite{MM13,MMT14,MMPY15}. 
It is useful to briefly recall this method.
\vskip 4pt
\noindent
\textbf{GEIM}

\begin{enumerate}[label=(\Alph*)]
    \item\label{GEIM_step_1} 
        \begin{itemize}
            \item Find $h_1 := \argmax \left\{ 
            ||f||_X : f \in \cf \right\}$ and 
            $\sigma_1 := \argmax \left\{ 
            | \sigma(h_1) | : \sigma \in \Sigma \right\}$.
            \item Define 
            $q_1 := h_1 / \sigma_1(h_1)$, 
            $S_1 := \{ q_1 \}$, 
            $L_1 := \{\sigma_1 \}$, and an operator
            $\cj_1 : \Span(\cf) \to \Span(S_1)$ by setting
            $\cj_1[w] := \sigma_1(w)q_1$ for every 
            $w \in \Span(\cf)$.
            \item Observe that, given any $w \in \Span(\cf)$, 
            we have $\sigma_1 ( w - \cj_1[w]) = 0$.
        \end{itemize}
    \item\label{GEIM_step_2} 
    Proceed recursively via the follwing inductive 
    step for $n \geq 2$.
        \begin{itemize}
            \item Suppose we have found subsets
            $S_{n-1} = \{ q_1 , \ldots , q_{n-1} \} 
            \subset \Span(\cf)$ and 
            $L_{n-1} = \{ \sigma_1 , \ldots , \sigma_{n-1} \} 
            \subset \Sigma$, and an operator 
            $\cj_{n-1} : \Span(\cf) \to \Span(S_{n-1})$
            satisfying, for every $w \in \Span(\cf)$, that
            $\sigma(w - \cj_{n-1}[w]) = 0$
            for every $\sigma \in L_{n-1}$.
            \item Find 
            $h_n := \argmax \left\{ ||f - \cj_{n-1}[f]||_X 
            : f \in \cf \right\}$ and 
            $\sigma_n := \argmax \left\{ 
            | \sigma(h_n - \cj_{n-1}[h_n]) | : 
            \sigma \in \Sigma \right\}$.
            \item Define
            $$ q_n := \frac{h_n - \cj_{n-1}[h_n]}
            {\sigma_n(h_n - \cj_{n-1}[h_n])}, $$
            $S_n := S_{n-1} \cup \{q_n\} \subset \Span(\cf)$ 
            and 
            $L_n := L_{n-1} \cup \{\sigma_n\} \subset \Sigma$.
            \item Construct operator 
            $\cj_n : \Span(\cf) \to \Span(S_n)$ by defining
            $$ \cj_n[w] := \cj_{n-1}[w] + 
            \sigma_n( w - \cj_{n-1}[w])q_n $$
            for $w \in \Span(\cf)$.
            Direct computation verifies that, 
            whenever $w \in \Span(\cf)$ and 
            $\sigma \in L_{n}$, we have 
            $\sigma(w - \cj_n[w]) = 0$.
        \end{itemize}
\end{enumerate}
\noindent 
The algorithm provides a sequence 
$\cj_1[\vph]$, $\cj_2[\vph], \ldots$ of approximations of $\vph$.
However, GEIM is intended to control the $X$-norm of the 
difference between 
$\vph$ and its approximation, i.e. to have 
$||\vph - \cj_n[\vph]||_X$ be small for a suitable integer $n$.
This aim requires additional assumptions to be made 
regarding the data $\Sigma \subset X^{\ast}$ which we 
do \emph{not} impose (see \cite{MMT14} for details).
Recall that we aim only to approximate $\vph$ over 
the data $\Sigma$,
i.e. we seek an approximation $u$ such that 
$|\sigma(\vph-u)|$ is small for every $\sigma \in \Sigma$. 
A consequence of this difference is that 
certain aspects of GEIM are 
not necessarily ideal for our task.

Firstly, the growth of the subset $L_{n-1}$ to $L_n$ is 
\emph{feature-driven}. That is, a new feature from $\cf$ 
to be used by the next approximation is selected, 
and then this new feature is used to 
determine the new functional from $\Sigma$ to be added
to the collection on which we require the approximation 
to coincide with the target $\vph$
(cf. \textbf{GEIM} \ref{GEIM_step_2}).
Since we only seek to approximate $\vph$ over the 
data $\Sigma$, \emph{data-driven} growth would be 
preferable. That is, we would rather select the new 
information from $\Sigma$ to be matched by the next 
approximation \emph{before} any consideration is 
given to determining which features from $\cf$ will 
be used to construct the new approximation.

Secondly, related to the first aspect, the features 
from $\cf$ to be used to construct $\cj_n[\vph]$ are
predetermined. Further, the features
used to construct $\cj_n[\vph]$ are forced to be included
in the features used
to construct $\cj_m[\vph]$ for any $m > n$.
This restriction is \emph{not} guaranteed to be sensible; 
it is conceivable that the features that work well at one 
stage are disjoint from the features that work well at 
another.
We would prefer that the features used to construct an 
approximation be determined by the target $\vph$ and the 
data selected from $\Sigma$ on which we require the 
approximation to match $\vph$.
This would avoid retaining features used at one 
step that become less effective at later steps, and could 
offer insight regarding which of the features in $\cf$ are
sufficient 
to capture the behaviour of $\vph$ on $\Sigma$.

Thirdly, at each step GEIM can provide an approximation for
\emph{any} $w \in \Span(\cf)$. That is, for $n \in \Z_{\geq 1}$,
the element $\cj_n[w]$ is a linear combination
of the elements in $S_n$ that coincides with $w$ on $L_n$.
Requiring the operator $\cj_n : \Span(\cf) \to \Span(S_n)$
to provide such an approximation for every $w \in \Span(\cf)$ 
stops the method from exploiting any advantages
available for the particular choice of $\vph \in \Span(\cf)$.
As we only aim to approximate $\vph$ itself, we would prefer
to remove this requirement and allow the method the 
opportunity to exploit advantages resulting from the 
specific choice of $\vph \in \Span(\cf)$.

All three aspects are addressed in GRIM.
The greedy growth of a subset $L \subset \Sigma$, 
determining the functionals in $\Sigma$ at which an 
approximation is required to agree with $\vph$ 
is data-driven.
At each step the desired approximation is found using 
recombination \cite{LL12,Tch15}
so that the features used to construct the 
approximation are determined by $\vph$ and the subset 
$L$ and, in particular, are \emph{not} predetermined.
This use of recombination ensures both that GRIM only 
produces approximations of $\vph$, and that GRIM can 
exploit advantages resulting from the specific choice of 
$\vph \in \Span(\cf)$.

\section{Recombination Thinning}
\label{recomb_thin}
In this section we illustrate how, 
given a finite collection $L \subset X^{\ast}$ of 
linear functionals, recombination \cite{LL12,Tch15} can be 
used to find an element
$u \in \Span(\cf) \subset X$ that coincides with 
$\vph$ throughout $L$, provided we can compute 
the values $\sigma(f_i)$ for every 
$i \in \{1, \ldots , \n\}$ and every linear functional 
$\sigma \in L$.
The recombination algorithm was initially introduced by 
Christian Litterer and the first author in 
\cite{LL12}; a substantially improved implementation was 
provided in the PhD thesis of Maria Tchernychova \cite{Tch15}.
A novel implementation of the method from \cite{Tch15} 
was provided in \cite{ACO20}.
A method with the same complexity as the original 
implementation in \cite{LL12} was introduced in 
the works \cite{FJM19,FJM22}.
Recombination has been applied in a number of contexts 
including particle filtering \cite{LL16}, 
kernel quadrature \cite{HLO21}, and mathematical 
finance \cite{NS21}.

For the readers convenience we briefly overview the ideas 
involved in the recombination algorithm. For this illustrative 
purpose consider a linear system of equations 
$\bA \bx = \by$ where $\bA \in \R^{m \times k}$, 
$\bx \in \R^m$, $\by \in \R^k$, and $k,m \in \Z_{\geq 1}$ 
with $m \geq k$.
We assume, for every $i \in \{1 , \ldots , k\}$, that $\bx_i > 0$.
Recombination relies on the simple observation that this linear
system is preserved under translations of $\bx$ 
by elements in the kernel of the matrix $\bA$.
To be more precise, if $\bx$ satisfies that $\bA \bx = \by$, 
and if $\bolde \in \ker(\bA)$ 
and $\theta \in \R$, then $\bx + \theta \bolde$ also satisfies
that $\bA \left( \bx + \theta \bolde \right) = \by$. 
The recombination algorithm makes use of linearly independent
elements in the $\ker(\bA)$ to reduce the number of 
non-zero entries in the solution vector $\bx$.

As outlined in \cite{LL12}, this could in principle be done
as follows. First, 
we choose a basis for the kernel of $\bA$. Computing the
\emph{Singular Value Decomposition} (SVD) of $\bA$ gives a method
of finding such a basis that is well-suited to dealing with 
numerical instabilities. 
Supposing that $\ker(\bA) \neq \{0\}$, 
let $\bolde_1 , \ldots , \bolde_l$ be a basis of $\ker(\bA)$
found via SVD.
For each $j \in \{1 , \ldots , l\}$ denote the coefficients of 
$\bolde_j$ by 
$\bolde_{j,1} , \ldots , \bolde_{j,m} \in \R$.

Consider the element $\bolde_1 \in \R^m$.
Choose $i \in \{1 , \ldots , m\}$ such that
\beq
    \label{recomb_scale_fac}
        \frac{\bx_i}{\bolde_{1,i}} 
        =
        \min \left\{ \frac{\bx_j}{\bolde_{1,j}} :
        \bolde_{1,j} > 0 \right\}.
\eeq
Replace $\bx$ by the vector
$\bx - \left( \bx_i / \bolde_{1,i} \right) \bolde_1$.
The new $\bx$ remains a solution to $\bA \bx = \by$, and now 
additionally satisfies that $\bx_i = 0$.
Moreover, for every $j \in \{1, \ldots , m\}$ such that
$j \neq i$, our choice of $i$ in \eqref{recomb_scale_fac} 
ensures that $\bx_j \geq 0$.
Finally, for $j \in \{2 , \ldots , l\}$, replace
$\bolde_j$ by $\bolde_j - ( \bolde_{j,i}/\bolde_{1,i} ) \bolde_1$
to ensure that $\bolde_{j,i} = 0$.
This final alteration allows us to repeat the process for the new 
$\bx$ using the new $\bolde_2$ in place of $\bolde_1$ since 
the addition of any scalar multiple of $\bolde_2$ to $\bx$ 
will \emph{not} change the fact that $\bx_i =0$.

After iteratively repeating this procedure for $j=2, \ldots , l$,
we obtain a vector $\bx' \in \R^m$ whose coefficients are all 
non-negative, with at most $m-l$ of the coefficients being 
non-zero, and still satisfying that $\bA \bx' = \by$.
That is, the original solution $\bx$ has been reduced to a new
solution $\bx'$ with at least $l$ fewer non-zero coefficients 
than the original solution $\bx$.

Observe that the positivity of the original coefficients
is weakly preserved. That is, if we let 
$x'_1 , \ldots , x'_m \in \R$ denote the coefficients of the
vector $\bx' \in \R^m$, 
then for each $i \in \{1,\ldots,m\}$
we have that $x'_i \geq 0$.
One consequence of this preservation is that recombination
can be used to reduce the support of an empirical measure
whilst preserving a given finite collection of moments
\cite{LL12}. Moreover, this property is essential to 
the ground-breaking state-of-the-art
\emph{convex kernel quadrature} method developed by 
Satoshi Hayakawa, the first author, and Harald Oberhauser 
in \cite{HLO21}.

The implementation of recombination proposed in 
\cite{LL12} iteratively makes use of the method outlined above,
for $m = 2k$, applied to linear systems arising via sub-dividing 
the original system. 
The improved \emph{tree-based} method developed by 
Maria Tchernychova in \cite{Tch15} provides a significantly 
more efficient implementation.
A \emph{geometrically greedy algorithm} is proposed in 
\cite{ACO20} to implement the algorithm of \cite{Tch15}.
In the setting of our example above, it follows from 
\cite{Tch15} that the complexity of the recombination algorithm 
is $O( mk + k^3 \log ( m/k ) )$.

Having outlined the recombination method, we turn our attention 
to establishing the following result regarding the use of 
recombination in our setting.

\begin{lemma}[\textbf{Recombination Thinning}]
\label{Banach_recombination_lemma}
Assume $X$ is a Banach space with dual space 
$X^{\ast}$, that $\n \in \Z_{\geq 1}$, and that 
$m \in \Z_{\geq 0}$.
Define $\m := \min \{ \n , m + 1 \}$.
Let $\cf := \{ f_1 , \ldots , f_{\n} \} \subset X$
be a collection of non-zero elements and 
$L = \{ \sigma_1 , \ldots , \sigma_m \} \subset X^{\ast}$. 
Suppose $a_1 , \ldots , a_{\n} > 0$ and consider the 
element $\vph \in \Span(\cf) \subset X$ defined
by $\vph := \sum_{i=1}^{\n} a_i f_i$.
Then recombination 
can be applied to find non-negative coefficients
$b_1 , \ldots , b_{\m} \geq 0$ and indices
$e(1) , \ldots , e(\m) \in \{1 , \ldots , \n \}$
satisfying that 
\beq
    \label{Banach_recomb_coeff_sum}
        \sum_{j=1}^{\m} b_j 
        =
        \sum_{i=1}^{\n} a_i,
\eeq
and such that the element $u \in \Span(\cf) \subset X$
defined by
\beq
    \label{Banach_recomb_approx}
        u := \sum_{j=1}^{\m} b_j f_{e(j)}
        \quad \text{satisfies, for every }
        \sigma \in L, \text{ that} \quad
        \sigma(\vph - u) = 0.
\eeq
Finally, the complexity of this use of recombination is
$O( \n m + m^3 \log (\n/m))$. 
\end{lemma}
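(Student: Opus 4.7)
The plan is to set up an appropriate linear system and then directly apply the recombination machinery reviewed earlier in the section. First, I would form the augmented matrix $A \in \R^{(m+1) \times \n}$ whose first $m$ rows encode the pairing of each functional with each feature, namely $A_{j,i} = \sigma_j(f_i)$ for $j \in \{1,\ldots,m\}$ and $i \in \{1,\ldots,\n\}$, and whose final row consists entirely of ones. As the target vector I would take $y \in \R^{m+1}$ with $y_j = \sigma_j(\vph)$ for $j \leq m$ and $y_{m+1} = \sum_{i=1}^{\n} a_i$. By construction, the vector $x = (a_1,\ldots,a_{\n})^{\top} \in \R^{\n}$ has strictly positive entries and satisfies $Ax = y$, because by linearity $\sigma_j(\vph) = \sum_i a_i \sigma_j(f_i)$ and the last row reproduces the trivial identity for the coefficient sum.

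Next, I would invoke the recombination procedure recalled above (in its efficient tree-based form from \cite{Tch15}) applied to this system. Recombination produces a new non-negative solution $x' \in \R^{\n}$ to $Ax' = y$ whose support has size at most $\mathrm{rank}(A) \leq m+1$. When $\n \leq m+1$ there is nothing to reduce and one simply takes $x' = x$; otherwise recombination strips away at least $\n - (m+1)$ non-zero coordinates while preserving non-negativity and the linear system. Either way the number of non-zero coordinates is at most $\bar{\m} = \min\{\n, m+1\}$, and relabelling these coordinates as $e(1),\ldots,e(\bar{\m}) \in \{1,\ldots,\n\}$ with associated weights $b_1,\ldots,b_{\bar{\m}} \geq 0$ (padding by zeros if necessary so that exactly $\bar{\m}$ indices are used) produces the desired data.

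The verification of the conclusions is then routine. The final row of $Ax' = y$ is precisely $\sum_{j=1}^{\bar{\m}} b_j = \sum_{i=1}^{\n} a_i$, which is \eqref{Banach_recomb_coeff_sum}. For each $\sigma_k \in L$, the $k$-th row of $Ax' = y$ reads $\sum_{j=1}^{\bar{\m}} b_j \sigma_k(f_{e(j)}) = \sigma_k(\vph)$, which by linearity of $\sigma_k$ is exactly $\sigma_k(u) = \sigma_k(\vph)$, giving \eqref{Banach_recomb_approx}. For the complexity bound, the recombination cost recalled earlier reads $O(Nk + k^3 \log(N/k))$ for an $N \times k$-style system with $N$ columns and $k$ rows; applied here with $N = \n$ and $k = m+1$ this yields $O(\n(m+1) + (m+1)^3 \log(\n/(m+1)))$, which simplifies to $O(\n m + m^3 \log(\n/m))$ (absorbing constants and handling the degenerate case $m=0$ separately, where the claim reduces to taking $u = \vph$ itself, i.e.\ $\bar{\m} = \n$ and $b_i = a_i$).

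The only step requiring a little care is the edge-case bookkeeping: one must confirm that $\mathrm{rank}(A) \leq m+1$ regardless of potential linear dependencies among the rows (so that recombination genuinely terminates at the claimed sparsity $\bar{\m}$), and that when $\n \leq m+1$ no reduction occurs so the original positive weights $a_i$ are returned unchanged. Neither point is a genuine obstacle — recombination halts as soon as the current kernel basis is exhausted, and the $\min$ in the definition of $\bar{\m}$ exactly accounts for this — but they should be flagged to make the statement hold uniformly in the relative sizes of $\n$ and $m$.
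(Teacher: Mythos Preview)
Your proposal is correct and follows essentially the same route as the paper: form the $(m+1)\times\n$ linear system by adjoining the all-ones row to the rows $(\sigma_j(f_i))_i$, apply recombination to the positive solution $(a_1,\ldots,a_{\n})$, read off the reduced support, pad with zeros to reach exactly $\m$ indices, and quote the Tchernychova complexity bound. The only cosmetic difference is that the paper places the all-ones row first rather than last.

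One small slip to correct: in your parenthetical about $m=0$ you write that $\bar{\m}=\n$ and $u=\vph$, but in fact $\m=\min\{\n,1\}=1$, so the claim is that a \emph{single} feature suffices (e.g.\ $u=(\sum_i a_i)f_1$, which trivially satisfies the empty set of functional constraints and the coefficient-sum identity). This does not affect the main argument.
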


\begin{proof}[Proof of Lemma
\ref{Banach_recombination_lemma}]
Assume $X$ is a Banach space with dual space 
$X^{\ast}$, that $\n \in \Z_{\geq 1}$, and that 
$m \in \Z_{\geq 0}$.
Define $\m := \min \{ \n , m + 1 \}$.
Let $\cf := \{ f_1 , \ldots , f_{\n} \} \subset X$
be a collection of non-zero elements and 
$L = \{ \sigma_1 , \ldots , \sigma_m \} \subset X^{\ast}$. 
Suppose $a_1 , \ldots , a_{\n} > 0$ and Consider the 
element $\vph \in \Span(\cf) \subset X$ defined
by $\vph := \sum_{i=1}^{\n} a_i f_i$.

The values 
$\sigma_1 (\varphi) , \ldots , \sigma_m(\varphi)$
and the sum of the coefficients 
$\sum_{i=1}^{\n} a_i$ give rise to
the linear system of equations 
\beq
	\label{Banach_recomb_lin_sys}
		\begin{pmatrix}
		    1 & 1 & \dots & 1 \\
			\sigma_1 (f_1) & \sigma_1 (f_2) 
			& \dots & \sigma_1 (f_{\n}) \\
			\vdots & \vdots & \ddots & \vdots \\
			\sigma_{m} (f_1) & \sigma_{m} (f_2) 
			& \dots & \sigma_{m} (f_{\n})
		\end{pmatrix}
		\begin{pmatrix} 
			a_1 \\ a_2 \\ \vdots \\ \vdots  \\ a_{\n} 
		\end{pmatrix}
		=
		\begin{pmatrix}
		    \sum_{i=1}^{\n} a_i \\
			\sigma_1(\vph)  \\ \vdots \\ 
			\sigma_{m} (\vph) 
		\end{pmatrix}
\eeq
which we denote more succinctly by $\bA \bx = \by$.

Since the coefficients $a_1 , \ldots , a_{\n} > 0$ are 
positive, we are able to apply recombination 
\cite{LL12,Tch15} to this linear system.
Combined with the observation that 
$\dim \left( \ker (\bA) \right) \geq \n - \m$, 
an application of recombination returns an element
$\bx' = \left( \bx'_1 , \ldots , \bx'_{\n} \right) \in \R^{\n}$ 
satisfying the following properties. 
Firstly, for each $j \in \{1, \ldots , \n\}$ the coefficient 
$\bx'_j \geq 0$ is non-negative.
Secondly, there are at most $\m$ indices $i \in \{1,\ldots ,\n\}$
for which $\bx'_i > 0$.

Let $D := \dim \left( \ker (\bA) \right) \geq \n - \m$.
Take 
$e(1) , \ldots , e(\n-D) \in \{1, \ldots , \n\}$ 
to be the indices 
$i \in \{1,\ldots ,\n\}$ for which $\bx'_i > 0$.
Then, for each $j \in \{1 , \ldots , \n - D\}$, we set 
$b_j := \bx'_{e(j)} > 0$.
Define an element $u \in \Span(\cf)$ by
$u:= \sum_{j=1}^{\n - D} b_j f_{e(j)}$ 
(cf. \eqref{Banach_recomb_approx}).
Recall that recombination ensures that $\bx'$ is
a solution to the linear system \eqref{Banach_recomb_lin_sys}.
Hence the equation corresponding to the top row of the matrix
$\bA$ tells us that
\beq
    \label{Banach_recomb_D_coeff_sum}
        \sum_{j=1}^{\n - D} b_j 
        = 
        \sum_{j=1}^{\n} \bx'_j 
        = 
        \sum_{j=1}^{\n} a_j.
\eeq
Moreover, given any $l \in \{1, \ldots , m\}$, the 
equation corresponding to row $l+1$ of the matrix $\bA$
tells us that
\beq    
    \label{Banach_recomb_D_lin_funcs_equal}
        \sigma(u) =
        \sum_{j=1}^{\n - D} b_j \sigma_l 
        \left( f_{e(j)} \right) 
        = 
        \sum_{j=1}^{\n} \bx'_j \sigma_l 
        \left( f_{e(j)} \right) 
        =
        \sigma_l(\vph) 
\eeq
If $D = \n - \m$ then \eqref{Banach_recomb_D_coeff_sum} and 
\eqref{Banach_recomb_D_lin_funcs_equal} 
are precisely the equalities claimed in 
\eqref{Banach_recomb_coeff_sum} and 
\eqref{Banach_recomb_approx}. 
If $D > \n - \m$ then $\n - D < \m$. Set
$b_{\n - D + 1} = \ldots = b_{\m} = 0$ and choose
\emph{any} indices 
$e(\n-D+1) , \ldots , e(\m) \in \{1, \ldots , \n\}$.
Evidently we have that 
$\sum_{j=1}^{\m} b_j = \sum_{j=1}^{\n - D} b_j$ and, 
for each $l \in \{1, \ldots , m\}$, that 
$\sum_{j=1}^{\m} b_j \sigma_l \left( f_{e(j)} \right) 
= \sum_{j=1}^{\n - D} b_j \sigma_l \left( f_{e(j)} \right) $.
Consequently, \eqref{Banach_recomb_D_coeff_sum} and 
\eqref{Banach_recomb_D_lin_funcs_equal} 
are once again precisely the equalities claimed in 
\eqref{Banach_recomb_coeff_sum} and 
\eqref{Banach_recomb_approx}.

It remains only to verify the claimed complexity of this 
application of recombination.
For this purpose, we observe that recombination is applied
to a linear system of $m+1$ equations in $\n$ variables. 
Hence from \cite{Tch15} we have that the complexity is 
$O \left( \n(m+1) + (m+1)^3 \log( \n / m+1) \right) 
= O \left( \n m + m^3 \log( \n / m ) \right)$ as claimed.
This completes the proof of Lemma 
\ref{Banach_recombination_lemma}.
\end{proof}

\section{The Banach GRIM Algorithm}
\label{Banach_GRIM_alg}
In this section we detail the \textbf{Banach GRIM} algorithm. 
The following \textbf{Banach Extension Step} is used
to grow a collection of linear functionals from $\Sigma$ 
at which we require our next approximation of $\vph$ to 
agree with $\vph$.
\vskip 4pt
\noindent
\textbf{Banach Extension Step}

\noindent
Assume $L' \subset \Sigma$. Let $u \in \Span(\cf)$.
Let $m \in \Z_{\geq 1}$ such that 
$\# \left( L' \right) + m \leq \Lambda := 
\# \left( \Sigma \right)$.
Take
\beq
    \label{banach_ext_step_one}
        \sigma_1 := \argmax \left\{ 
        |\sigma(\varphi-u)| ~:~ 
        \sigma \in \Sigma \right\}.
\eeq
Inductively for $j=2,3,\ldots,m$ take
\beq
    \label{banach_ext_step_j}
        \sigma_j := \argmax \left\{
        |\sigma(\varphi - u)| ~:~
        \sigma \in \Sigma \setminus 
        \{ \sigma_1 , \ldots , \sigma_{j-1}\}
        \right\}. 
\eeq
Once $\sigma_1 , \ldots , \sigma_m \in \Sigma$
have been defined, we extend $L'$ to 
$L := L' \cup \{ \sigma_1 , \ldots , \sigma_m\}$.
\vskip 4pt
\noindent
For each choice of subset $L \subset \Sigma$, we
use recombination
(cf. Lemma \ref{Banach_recombination_lemma})
to find $u \in \Span(\cf)$ 
agreeing with $\vph$ throughout $L$. 
Theoretically, Lemma \ref{Banach_recombination_lemma}
verifies that recombination can be used to 
find an approximation $u$ of $\varphi$ for 
which $\sigma(\varphi - u) = 0$ for all linear
functionals $\sigma \in L$ for a given
subset $L\subset \Sigma \subset X^{\ast}$.
However, implementations of recombination
inevitably result in numerical errors. 
That is, the returned coefficients will only solve the 
equations modulo some (ideally) small error
term. To account for this in our analysis, 
whenever we apply Lemma 
\ref{Banach_recombination_lemma} we will only 
assume that the resulting approximation
$u \in \Span(\cf)$ is \emph{close} to $\varphi$ at 
each functional $\sigma \in L$. That is, for
each $\sigma \in L$, we have that
$|\sigma(\varphi - u)| \leq \ep_0$ for some
(small) constant $\ep_0 \geq 0$.

Recall (cf. Section \ref{recomb_thin}) that if recombination
is applied to a linear system corresponding to a matrix $A$,
then a \emph{Singular Value Decomposition} SVD of the matrix 
$A$ is used to find a basis for $\ker(A)$.
Re-ordering the rows of the matrix $A$ (i.e. changing the 
order in which the equations are considered) can potentially
result in a different basis for $\ker(A)$ being selected.
Thus shuffling the order of the equations can affect the 
approximation returned by recombination via the 
\emph{recombination thinning} Lemma
\ref{Banach_recombination_lemma}.
We exploit this by optimising the approximation returned
by recombination over a chosen number of \emph{shuffles} of 
the equations forming the linear system. 
This is made precise in the following 
\textbf{Banach Recombination Step} detailing how, for a given 
subset $L \subset \Sigma$, we use 
recombination to find an element $u \in \Span(\cf)$ that is 
within $\ep_0$ of $\vph$ throughout $\Sigma$.
\vskip 4pt
\noindent
\textbf{Banach Recombination Step}

\noindent
Assume $L \subset \Sigma$. Let $s \in \Z_{\geq 1}$. 
For each $j \in \{1, \ldots , s\}$ we do the following. 
\begin{enumerate}[label=(\Alph*)]
    \item\label{banach_recomb_step_A} 
    Let $L_j \subset \Sigma$ be the subset resulting
    from applying a random permutation to the ordering of 
    the elements in $L$.
    \item\label{banach_recomb_step_B}  
    Apply recombination 
    (cf. the \emph{recombination thinning} Lemma 
    \ref{Banach_recombination_lemma}) to find 
    an element $u_j \in \Span(\cf)$ satisfying, for 
    every $\sigma \in L_j$, that 
    $|\sigma(\vph-u_j)| \leq \ep_0$.
    \item\label{banach_recomb_step_C}  
    Compute $E[u_j] := 
    \max \left\{ |\sigma(\vph-u_j)| 
    : \sigma \in \Sigma \right\}$.
\end{enumerate}
After obtaining the elements $u_1 , \ldots , u_s \in \Span(\cf)$,
define $u \in \Span(\cf)$ by 
\beq
    \label{banach_recomb_step_output}
        u := \argmin \left\{ 
        E[w] : w \in \{u_1 , \ldots , u_s\} \right\}.
\eeq
Then $u \in \Span(\cf)$ is returned as our approximation of 
$\vph$ that satisfies, for every $\sigma \in \Sigma$, that 
$|\sigma(\vph-u)| \leq \ep_0$.
\vskip 4pt
\noindent
We now detail our proposed GRIM 
algorithm to find an approximation $u \in \Span(\cf)$
of $\vph \in \Span(\cf)$ that is close to $\varphi$ 
at every linear functional in $\Sigma$.
\vskip 4pt
\noindent
\textbf{Banach GRIM}
\begin{enumerate}[label=(\Alph*)]
    \item\label{Banach_GRIM_i} 
    Fix $\ep > 0$ as the \emph{target accuracy
    threshold}, $\ep_0 \in [0,\ep)$
    as the \emph{acceptable recombination error}, 
    and $M \in \Z_{\geq 1}$ as the 
    \emph{maximum number of steps}.
    Choose integers 
    $s_1 , \ldots , s_M \in \Z_{\geq 1}$ as the 
    \emph{shuffle numbers}, and 
    integers $k_1 , \ldots , k_M \in \Z_{\geq 1}$
    with 
    \beq
        \label{kappa_constraint}
            \kappa := k_1 + \ldots + k_M \leq 
            \min \left\{ \n - 1 , \Lambda \right\}.
    \eeq
    \item\label{Banach_GRIM_ii} 
    For each $i \in \{1, \ldots , \n\}$, 
    if $a_i < 0$ then replace $a_i$ and 
    $f_i$ by $-a_i$ and $-f_i$ respectively.
    This ensures that $a_1 , \ldots , a_{\n} > 0$
    whilst leaving the expansion 
    $\vph = \sum_{i=1}^{\n} a_i f_i$ unaltered.
    Additionally, rescale each $f_i$ to have unit 
    $X$ norm.
    That is, for each $i \in \N$ we replace 
    $f_i$ by $h_i := \frac{f_i}{||f_i||_X}$.
    Then $\vph = \sum_{i=1}^{\n} \al_i h_i$ 
    where, for each $i \in \{1 , \ldots , \n \}$, 
    we have $\al_i := a_i || f_i ||_X > 0$.
    \item\label{Banach_GRIM_iii} 
    Apply the 
    \textbf{Banach Extension Step}, with 
    $L' := \varnothing$, $u := 0$ and 
    $m := k_1$, to obtain a subset 
    $\Sigma_1 = \{ \sigma_{1,1} , \ldots , 
    \sigma_{1,k_1} \} \subset \Sigma$.
    Apply the \textbf{Banach Recombination Step}, 
    with $L := \Sigma_1$ and $s:=s_1$, to find 
    an element $u_1 \in \Span(\cf)$ satisfying, 
    for every $\sigma \in \Sigma_1$, that
    $|\sigma(\vph - u)| \leq \ep_0$.
    
    If $M = 1$ then the algorithm terminates here
    and returns $u_1$ as the final approximation 
    of $\vph$
    \item\label{Banach_GRIM_iv} 
    If $M \geq 2$ then we proceed inductively 
    for $t \geq 2$ as follows.
    If $|\sigma(\vph - u_{t-1}) | \leq \ep$
    for every $\sigma \in \Sigma$
    then we stop and $u_{t-1}$ is an 
    approximation of $\varphi$ possessing the 
    desired level of accuracy.
    Otherwise, we choose $k_t \in \Z_{\geq 1}$
    and apply the \textbf{Banach Extension Step},
    with $L' = \Sigma_t$, $u := u_{t-1}$ and 
    $m := k_t$, to obtain a subset 
    $\Sigma_t := \Sigma_{t-1} \cup 
    \left\{ \sigma_{t,1} , \ldots ,\sigma_{t,k_t} 
    \right\} \subset \Sigma$. Apply the 
    \textbf{Banach Recombination Step}, with $L := \Sigma_t$
    and $s := s_M$, to find an element $u \in \Span(\cf)$
    satisfying, for every $\sigma \in \Sigma_t$,
    that $|\sigma(\vph - u)| \leq \ep_0$.
    
    The algorithm ends either by returning 
    $u_{t-1}$ for the $t \in \{2, \ldots , M\}$ 
    for which the stopping criterion was triggered
    as the final approximation of $\vph$,
    or by returning $u_M$ as the final approximation of 
    $\vph$ if the stopping criterion 
    is never triggered.
\end{enumerate}
\vskip 2mm
\noindent
If the algorithm terminates as a result of one of the 
early stopping criterion being triggered then we are 
guaranteed that the returned approximation $u$ satisfies, 
for every $\sigma \in \Sigma$, that 
$| \sigma( \vph - u) | \leq \ep$. 
In Subsection \ref{conv_gen} we establish estimates for 
how large $M$ is required to be in order to guarantee 
that the algorithm returns an approximation of 
$\vph$ possessing this level of accuracy throughout
$\Sigma$ 
(cf. the \textbf{Banach GRIM Convergence Theorem}
\ref{banach_conv}).

GRIM uses \emph{data-driven} growth.
For each $m \in \{2 , \ldots , M\}$, GRIM first determines 
the new linear functionals in $\Sigma$ to be added to 
$\Sigma_{m-1}$ to form $\Sigma_m \subset \Sigma$ before
using recombination to find an approximation 
coinciding with $\vph$ on $\Sigma_m$. That is, 
we first choose the new information 
that we want our approximation to match \emph{before} 
using recombination to both select the elements from
$\cf$ and use them to construct our approximation.

Evidently we have the nesting property that
$\Sigma_{t_1} \subset \Sigma_{t_2}$ for integers
$t_1 \leq t_2$, ensuring that at each step we are 
increasing the amount of information that we 
require our approximation to match.
For each integer $m \in \{1, \ldots , M \}$ let 
$S_m \subset \cf$ denote the sub-collection of
elements from $\cf$ used to form the approximation
$u_m$.
Recombination is applied to a system of
$1+k_1 + \ldots + k_m$ linear equations when 
finding $u_m$, hence we may conclude that
$\#(S_m) \leq \min \left\{1 + k_1 + \ldots + k_m , \n \right\}$ 
(cf. the \emph{recombination thinning}
Lemma \ref{Banach_recombination_lemma}).
Besides this upper bound for $\#(S_m)$, 
we have \emph{no} control on the sets $S_m$.
We impose only that the linear functionals
are greedily chosen; the selection of the elements 
from $\cf$ to form the approximation $u_m$ is left 
up to recombination and determined by the data.
In contrast to GEIM, there is \emph{no} requirement that 
elements from $\cf$ used to form $u_m$ must also be 
used for $u_l$ for $l > m$.

The restriction on $\kappa := k_1 + \ldots + k_M $ 
in \eqref{kappa_constraint} is for the following reasons.
As observed above, for $m \in \{1, \ldots , M\}$, $k_s$ is 
the number of new linear functionals to be 
chosen at the $m^{\text{th}}$-step. 
Hence, at the $m^{\text{th}}$-step, recombination is 
used to find an approximation $u$ that is within 
$\ep_0$ of $\vph$ on a collection of 
$\kappa_s := k_1 + \ldots + k_m$ linear functionals from 
$\Sigma$. Evidently we have, for every $m \in \{1, \ldots , M\}$,
that $\kappa_s \leq \kappa$.

A first consequence of the restriction in 
\eqref{kappa_constraint} ensures is that, for every 
$m \in \{1, \ldots , M \}$, we have $\kappa_m \leq \n -1$.
Since the linear system that recombination is applied to at 
step $s$ consists of $1 + \kappa_m$ equations (cf. 
the \emph{recombination thinning} Lemma 
\ref{Banach_recombination_lemma}), this prevents 
the number of equations from exceeding $\n$.
When the number of equations is at least $\n$, recombination
returns the original coefficients without reducing the 
number that are non-zero (see Subsection \ref{recomb_thin}).
Consequently, $\kappa_m \geq \n - 1$ guarantees that step 
$s$ returns $\vph$ itself as the approximation of $\vph$. 
The restriction in \eqref{kappa_constraint} ensures that
the algorithm ends if this (essentially useless) stage is 
reached.

A second consequence of \eqref{kappa_constraint} is, 
for every $m \in \{1, \ldots , M \}$, that 
$\kappa_m \leq \Lambda$. Note the collection 
$\Sigma_m \subset \Lambda$ has cardinality 
$\kappa_m$.
If $\kappa_m = \Lambda$ then we necessarily have that 
$\Sigma_m = \Sigma$, and so recombination is used to 
find an approximation of $\vph$ that is within $\ep_0$
of $\vph$ at every $\sigma \in \Sigma$.
The restriction \eqref{kappa_constraint} ensures that
if this happens then the algorithm terminates without
carrying out additional steps.

\section{Complexity Cost}
\label{complexity_cost_subseq}
In this subsection we consider the complexity cost of the
\textbf{Banach GRIM} algorithm presented in Section 
\ref{Banach_GRIM_alg}. We begin by recording the 
complexity cost of the \textbf{Banach Extension Step}.

\begin{lemma}[\textbf{Banach Extension Step} Complexity Cost]
\label{banach_ext_step_cost_lemma}
Let $\n, \Lambda, m  , t \in \Z_{\geq 1}$ and $X$ be a 
Banach space with dual space $X^{\ast}$. 
Assume $\cf = \left\{ f_1 , \ldots , f_{\n} \right\} 
\subset X \setminus \{0\}$ 
and that $\Sigma \subset X^{\ast}$ has cardinality 
$\Lambda$.
Let $a_1 , \ldots , a_{\n} \in \R \setminus \{0\}$
and define $\vph := \sum_{i=1}^{\n} a_i f_i$.
Assume that the set $\{ \sigma(\vph) : \sigma \in \Sigma \} $
and, for every $i \in \{1, \ldots , \n\}$, the set 
$\{ \sigma(f_i) : \sigma \in \Sigma \}$ have already been
computed.
Then for any choices of $L' \subset \Sigma$ with 
$\# \left( L' \right) \leq \Lambda - m$ and any
$u \in \Span(\cf)$ with $\# \support(u) = t$, 
the complexity cost of applying the 
\textbf{Banach Extension Step}, with the $L'$, $u$ and 
$m$ there as the $L'$, $u$ and $m$ here respectively, is
$\cO \left( (m+t) \Lambda \right)$. 
\end{lemma}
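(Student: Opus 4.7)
The plan is to reduce the extension step to two elementary bookkeeping tasks: (i) evaluating the quantity $\sigma(\vph - u)$ at every $\sigma \in \Sigma$, and (ii) extracting the $m$ largest values in absolute value from the resulting list. The hypothesis that $\{\sigma(\vph) : \sigma \in \Sigma\}$ and $\{\sigma(f_i) : \sigma \in \Sigma\}$ have been precomputed will collapse task (i) to linear-combination arithmetic and task (ii) to simple scans, so the complexity will follow directly.

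For task (i), I would first write $u = \sum_{i \in I} b_i f_i$ where $I \subset \{1, \ldots, \n\}$ is the support of $u$ and $|I| = t$. By linearity, for each $\sigma \in \Sigma$ we have
\[
    \sigma(\vph - u) = \sigma(\vph) - \sum_{i \in I} b_i \sigma(f_i),
\]
and every quantity on the right-hand side is already available from the precomputed data. Evaluating the sum on the right costs $\cO(t)$ operations, so evaluating the full list $\{\sigma(\vph - u) : \sigma \in \Sigma\}$ costs $\cO(t \Lambda)$.

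For task (ii), once the list of $\Lambda$ real numbers $\{|\sigma(\vph - u)| : \sigma \in \Sigma\}$ is in hand, the linear functional $\sigma_1$ defined in \eqref{banach_ext_step_one} can be identified by a single linear scan at cost $\cO(\Lambda)$. Inductively, for each $j \in \{2, \ldots, m\}$, the functional $\sigma_j$ given by \eqref{banach_ext_step_j} is likewise identified by a linear scan over $\Sigma$, skipping the (at most $j-1$) indices already selected, which again costs $\cO(\Lambda)$. Summing over the $m$ iterations yields $\cO(m\Lambda)$. Forming $L := L' \cup \{\sigma_1, \ldots, \sigma_m\}$ is then trivial.

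Adding the two contributions gives a total cost of $\cO(t\Lambda) + \cO(m\Lambda) = \cO((m+t)\Lambda)$, as claimed. I do not anticipate any genuine obstacle here; the only thing to be careful about is the accounting step that ensures the precomputed sets $\{\sigma(\vph)\}$ and $\{\sigma(f_i)\}$ are genuinely reused rather than recomputed, so that the $\cO(t)$-per-functional bound for evaluating $\sigma(u)$ is the correct cost and the dominant term remains linear in $\Lambda$.
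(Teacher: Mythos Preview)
Your proposal is correct and follows essentially the same approach as the paper: compute $\{|\sigma(\vph-u)|:\sigma\in\Sigma\}$ at cost $\cO(t\Lambda)$ using the precomputed values and the $t$-term expansion of $u$, extract the top $m$ argmax values at cost $\cO(m\Lambda)$, and append to $L'$ at cost $\cO(m)$. The paper's write-up is slightly terser but the decomposition and the accounting are identical.
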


\begin{proof}[Proof of Lemma \ref{banach_ext_step_cost_lemma}]
Let $\n, \Lambda, m  , t \in \Z_{\geq 1}$ and $X$ be a 
Banach space with dual space $X^{\ast}$. 
Suppose that $\cf = \left\{ f_1 , \ldots , f_{\n} \right\} 
\subset X \setminus \{0\}$ 
and that $\Sigma \subset X^{\ast}$ has cardinality 
$\Lambda$.
Let $a_1 , \ldots , a_{\n} \in \R \setminus \{0\}$
and define $\vph := \sum_{i=1}^{\n} a_i f_i$.
Assume that the set $\{ \sigma(\vph) : \sigma \in \Sigma \} $
and, for every $i \in \{1, \ldots , \n\}$, the set 
$\{ \sigma(f_i) : \sigma \in \Sigma \}$ have already been
computed.
Suppose that $L' \subset \Sigma$ with 
$\# \left( L' \right) \leq \Lambda - m$ and 
$u \in \Span(\cf)$ with $\# \support(u) = t$.
Recall our convention (cf. Section \ref{notation}) that 
$\support(u)$ is the set of the $f_i$ that correspond to 
the non-zero coefficients in the expansion of $u$ in 
terms of the $f_i$. That is, $u= \sum_{i=1}^{\n} u_i f_i$
for some coefficients $u_1 , \ldots , u_{\n} \in \R$, and 
\beq
    \label{support(u)_def}
        \support(u) := \left\{ f_j ~:~
        j \in \{ 1 , \ldots , \n \} \text{ and }
        u_j \neq 0 \right\}.
\eeq
Consider carrying out the the 
\textbf{Banach Extension Step} with the $L'$, $u$ and 
$m$ there as the $L'$, $u$ and $m$ here respectively.
Since we have access to 
$\{ \sigma(\vph) : \sigma \in \Sigma \} $ and 
$\{ \sigma(f_i) : \sigma \in \Sigma \}$ for every 
$i \in \{1 , \ldots , \n\}$ without additional computation, 
and since $\# \support(u) = t$, the complexity cost of 
computing the set 
$\left\{ |\sigma(\vph - u) | : \sigma \in \Sigma \right\}$
is no worse than 
$\cO \left( t \Lambda \right)$.
The complexity cost of subsequently extracting the top 
$m$ argmax values of this set is no worse than 
$\cO(m \Lambda )$. 
The complexity cost of appending the resulting $m$ 
linear functionals in $\Sigma$ to the collection $L'$
is $\cO(m)$. 
Therefore the entire \textbf{Banach Extension Step} 
has a complexity cost no worse than 
$\cO \left( (m+t)\Lambda \right)$ as claimed.
This completes the proof of Lemma 
\ref{banach_ext_step_cost_lemma}.
\end{proof}
\vskip 4pt
\noindent
We next record the complexity cost of the 
\textbf{Banach Recombination Step}.

\begin{lemma}
\label{Banach_recomb_step_cost_lemma}
Let $\n , \Lambda , m , s \in \Z_{\geq 1}$ and 
$X$ be a Banach space with dual space $X^{\ast}$. 
Assume $\cf = \left\{ f_1 , \ldots , f_{\n} \right\} 
\subset X \setminus \{0\}$ 
and that $\Sigma \subset X^{\ast}$ has cardinality 
$\Lambda$.
Let $a_1 , \ldots , a_{\n} \in \R \setminus \{0\}$
and define $\vph := \sum_{i=1}^{\n} a_i f_i$.
Assume that the set $\{ \sigma(\vph) : \sigma \in \Sigma \} $
and, for every $i \in \{1, \ldots , \n\}$, the set 
$\{ \sigma(f_i) : \sigma \in \Sigma \}$ have already been
computed. 
Then for any $L \subset \Sigma$ with cardinality $\#(L) = m$
the complexity cost of applying the 
\textbf{Banach Recombination Step}, with the subset $L$ and the 
integer $s$ there as $L$ and $s$ here respectively, is 
\beq
    \label{recomb_step_cost}
        \cO \left( ms( \n + \Lambda) + 
        m^3 s \log \left( \frac{\n}{m} \right) \right).
\eeq
\end{lemma}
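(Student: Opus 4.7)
The plan is to fix a single iteration $j \in \{1,\ldots,s\}$ of the \textbf{Banach Recombination Step}, tally the cost of each of the three substeps \ref{banach_recomb_step_A}--\ref{banach_recomb_step_C}, then multiply by $s$ and add the final argmin over $\{u_1,\ldots,u_s\}$.

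For substep \ref{banach_recomb_step_A}, a single random permutation of an ordered list of size $m$ can be produced in $\cO(m)$ operations, so this contributes a negligible term. For substep \ref{banach_recomb_step_B}, the \emph{recombination thinning} Lemma \ref{Banach_recombination_lemma} is applied to $L_j$, which has the same cardinality $m$ as $L$; the lemma yields both an approximation $u_j$ and a complexity cost of $\cO(\n m + m^3 \log(\n/m))$. Crucially, the same lemma bounds the cardinality of $\support(u_j)$ by $\min\{\n,m+1\}$, a fact I will carry into the cost of substep \ref{banach_recomb_step_C}.

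For substep \ref{banach_recomb_step_C}, I need to compute $|\sigma(\vph - u_j)|$ for every $\sigma \in \Sigma$ and then extract the maximum. Since the values $\sigma(\vph)$ and $\sigma(f_i)$ for $i \in \{1,\ldots,\n\}$ and $\sigma \in \Sigma$ are assumed to be precomputed, evaluating $\sigma(u_j)$ for a fixed $\sigma$ reduces to summing at most $m+1$ terms of the form $b\,\sigma(f_{e(j)})$, which costs $\cO(m)$. Ranging over the $\Lambda$ elements of $\Sigma$ and then taking a maximum gives a total cost of $\cO(m\Lambda + \Lambda) = \cO(m\Lambda)$ for substep \ref{banach_recomb_step_C}.

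Adding substeps \ref{banach_recomb_step_A}--\ref{banach_recomb_step_C} gives a per-iteration cost of
\[
\cO\!\left(m + \n m + m^3\log(\n/m) + m\Lambda\right)
=\cO\!\left(m(\n+\Lambda) + m^3\log(\n/m)\right).
\]
Multiplying by the $s$ iterations and noting that the concluding argmin in \eqref{banach_recomb_step_output} adds only $\cO(s)$ comparisons (absorbed into the rest) yields the claimed bound \eqref{recomb_step_cost}. There is no serious obstacle here; the only point requiring care is to use the $\#\support(u_j) \leq m+1$ bound from Lemma \ref{Banach_recombination_lemma} in order to keep the cost of substep \ref{banach_recomb_step_C} at $\cO(m\Lambda)$ rather than at $\cO(\n\Lambda)$, so that it does not dominate the recombination cost.
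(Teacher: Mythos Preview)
Your proof is correct and follows essentially the same approach as the paper: tally the cost of each substep \ref{banach_recomb_step_A}--\ref{banach_recomb_step_C} for a fixed $j$, multiply by $s$, and absorb the final $\cO(s)$ argmin. The key observation you correctly identify --- using $\#\support(u_j)\le m+1$ from Lemma~\ref{Banach_recombination_lemma} to bound the cost of computing $E[u_j]$ by $\cO(m\Lambda)$ --- is exactly what the paper does.
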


\begin{proof}[Proof of Lemma \ref{Banach_recomb_step_cost_lemma}]
Let $\n , \Lambda , m , s \in \Z_{\geq 1}$ and 
$X$ be a Banach space with dual space $X^{\ast}$. 
Assume that $\cf = \left\{ f_1 , \ldots , f_{\n} \right\} 
\subset X \setminus \{0\}$ 
and that $\Sigma \subset X^{\ast}$ has cardinality 
$\Lambda$.
Let $a_1 , \ldots , a_{\n} \in \R \setminus \{0\}$
and define $\vph := \sum_{i=1}^{\n} a_i f_i$.
Assume that the set $\{ \sigma(\vph) : \sigma \in \Sigma \} $
and, for every $i \in \{1, \ldots , \n\}$, the set 
$\{ \sigma(f_i) : \sigma \in \Sigma \}$ have already been
computed.

Consider applying the 
\textbf{Banach Recombination Step} with the subset $L$ and the 
integer $s$ there as $L$ and $s$ here respectively.
Let $j \in \{1 , \ldots , s\}$.
The complexity cost of shuffling of the elements in 
$L$ to obtain $L_j$ is $\cO(s)$.
By appealing to the \emph{recombination thinning}
Lemma \ref{Banach_recombination_lemma}
we conclude that the complexity cost of applying recombination 
to find $u_j \in \Span(\cf)$ satisfying, for every 
$\sigma \in L_j$, that $|\sigma(\vph-u_j)| \leq \ep_0$ 
is $\cO \left( \n m + m^3 \log ( \n / m ) \right)$.
Further recall that, since $\#(L) = m$,  
the \emph{recombination thinning}
Lemma \ref{Banach_recombination_lemma}
ensures that $\# \support(u_j) \leq m + 1$. Thus, since
we already have access to
$\{ \sigma(\vph) : \sigma \in \Sigma \} $ and 
$\{ \sigma(f_i) : \sigma \in \Sigma \}$ for every 
$i \in \{1 , \ldots , \n\}$ without additional computation 
and we know from ,
the complexity cost of computing 
$E[u_j] := \max \left\{ 
|\sigma(\vph - u_j)| : \sigma \in \Sigma \right\}$
is $\cO(\Lambda m)$.

Therefore, the complexity cost of 
\textbf{Banach Recombination Step} \ref{banach_recomb_step_A}, 
\ref{banach_recomb_step_B}, and 
\ref{banach_recomb_step_C} is 
\beq
    \label{recomb_step_cost_v1}
        \cO \left( sm + s\Lambda m + 
        s \n m + m^3 s \log \left( \frac{\n}{m} \right) \right)
        =
        \cO \left( ms( \n + \Lambda) + 
        m^3 s \log \left( \frac{\n}{m} \right) \right).
\eeq
The complexity cost of the final selection of 
$u := \argmin \left\{ E[w] : w \in \{u_1 , \ldots , u_s\} \right\}$
is $\cO(s)$.
Combined with \eqref{recomb_step_cost_v1}, this yields 
that the complexity cost of the entire 
\textbf{Banach Recombination Step} is 
\beq
    \label{recomb_step_cost_v2}
        \cO \left( ms( \n + \Lambda) + 
        m^3 s \log \left( \frac{\n}{m} \right) \right).
\eeq
as claimed in \eqref{recomb_step_cost}.
This completes the proof of Lemma \ref{Banach_recomb_step_cost_lemma}.
\end{proof}
\vskip 2mm
\noindent
We now establish an upper bound for the complexity cost of
the \textbf{Banach GRIM} algorithm via repeated use of Lemmas 
\ref{banach_ext_step_cost_lemma} and 
\ref{Banach_recomb_step_cost_lemma}.
This is the content of the following result.

\begin{lemma}[\textbf{Banach GRIM Complexity Cost}]
\label{GRIM_cost_lemma}
Let $M , \n , \Lambda \in \Z_{\geq 1}$ and $\ep > \ep_0 \geq 0$. 
Take $s_1 , \ldots , s_M \in \Z_{\geq 1}$ and
$k_1 , \ldots , k_M \in \Z_{\geq 1}$ 
with $k_1 + \ldots + k_M \leq 
\min \left\{ \n - 1 , \Lambda \right\}$.
For $j \in \{1, \ldots , M \}$ let 
$\kappa_j := \sum_{i=1}^j k_i$.
Assume $X$ is a Banach space with dual-space $X^{\ast}$.
Suppose
$\cf := \{ f_1 , \ldots , f_{\n} \} \subset X \setminus \{0\}$ 
and that $\Sigma \subset X^{\ast}$ is finite with 
cardinality $\# \left( \Sigma \right) = \Lambda$.
Let $a_1 , \ldots , a_{\n} \in \R \setminus \{0\}$ and define 
$\vph \in \Span(\cf)$ by $\vph := \sum_{i=1}^{\n} a_i f_i$.
The complexity cost of completing $n$ steps of the 
\textbf{Banach GRIM} algorithm to approximate $\vph$, 
with $\ep$ as the target accuracy,
$\ep_0$ as the acceptable recombination error, 
$M$ as the maximum number of steps,
$s_1 , \ldots , s_M$ as the shuffle numbers, 
and the integers $k_1 , \ldots , k_M \in \Z_{\geq 1}$
as the integers $k_1 , \ldots , k_M$ chosen in Step 
\ref{Banach_GRIM_i} of the \textbf{Banach GRIM} algorithm, is 
\beq
    \label{GRIM_M_steps_cost}
        \cO \left( \n \Lambda + 
        \sum_{j=1}^M \kappa_j s_j 
        \left( \n + \Lambda \right)
        +
        \kappa_j^3 s_j
        \log \left( \frac{\n}{\kappa_j}
        \right)
        \right).
\eeq
\end{lemma}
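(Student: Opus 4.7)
The plan is to decompose the total cost of running $M$ steps of \textbf{Banach GRIM} into three pieces: (i) the one-time preprocessing needed to evaluate every feature against every datum, (ii) the successive applications of the \textbf{Banach Extension Step}, and (iii) the successive applications of the \textbf{Banach Recombination Step}. Pieces (ii) and (iii) are already isolated in Lemmas \ref{banach_ext_step_cost_lemma} and \ref{Banach_recomb_step_cost_lemma}, which both assume that the sets $\{\sigma(\vph) : \sigma \in \Sigma\}$ and $\{\sigma(f_i) : \sigma \in \Sigma\}$ for $i \in \{1,\ldots,\n\}$ have been precomputed. Hence the first task is to verify that this precomputation can be performed once, upfront, at cost $\cO(\n\Lambda)$, by simply looping over the $\Lambda$ functionals and the $\n$ features (and using $\vph = \sum_i a_i f_i$ to obtain $\sigma(\vph)$ by summing). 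This yields the leading $\n\Lambda$ term in \eqref{GRIM_M_steps_cost}.

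Next I would control the per-step cost. At step $j \in \{1,\ldots,M\}$ the \textbf{Banach Extension Step} is applied with $m := k_j$ to the current approximation $u_{j-1}$ (with $u_0 := 0$) and the current functional set $\Sigma_{j-1}$ of cardinality $\kappa_{j-1}$. By the \emph{recombination thinning} Lemma \ref{Banach_recombination_lemma}, the support size $t_{j-1}$ of $u_{j-1}$ satisfies $t_{j-1} \leq \kappa_{j-1} + 1 \leq \kappa_j$. Applying Lemma \ref{banach_ext_step_cost_lemma} therefore bounds the cost of the $j^{\mathrm{th}}$ extension by $\cO((k_j + t_{j-1})\Lambda) = \cO(\kappa_j \Lambda)$. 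Then the \textbf{Banach Recombination Step} is applied with $L = \Sigma_j$ of cardinality $\kappa_j$ and shuffle number $s_j$, so by Lemma \ref{Banach_recomb_step_cost_lemma} this step costs
\begin{equation*}
    \cO\!\left( \kappa_j s_j (\n + \Lambda) + \kappa_j^3 s_j \log\!\left(\frac{\n}{\kappa_j}\right) \right).
\end{equation*}

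Finally, I would sum these contributions over $j = 1,\ldots,M$, absorbing the $\cO(\kappa_j \Lambda)$ extension cost into the $\cO(\kappa_j s_j(\n+\Lambda))$ recombination cost (since $s_j \geq 1$), and adding the upfront $\cO(\n\Lambda)$ precomputation cost. The result is exactly the bound \eqref{GRIM_M_steps_cost}. Two minor book-keeping points to be careful about: one must justify that the constraint $\kappa_M \leq \min\{\n-1,\Lambda\}$ from \eqref{kappa_constraint} keeps the logarithmic argument $\n/\kappa_j$ bounded away from $1$ only trivially (it need not be, but the $\cO$-notation still absorbs it), and one must check that the comparison of $u_{j-1}$ to $\vph$ on $\Sigma$ used in the early-stopping test is already subsumed by the $\cO(\kappa_j \Lambda)$ extension cost, since both rely on the same precomputed evaluations. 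The main (very mild) obstacle is simply ensuring that the support bound $t_{j-1} = \cO(\kappa_j)$ is propagated correctly from the previous recombination call; everything else is routine bookkeeping using the two cost lemmas already in hand.
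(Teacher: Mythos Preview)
Your proposal is correct and follows essentially the same route as the paper: precompute the $\n\Lambda$ evaluations once, then at each step $j$ invoke Lemma~\ref{banach_ext_step_cost_lemma} (using the support bound $t_{j-1}\leq 1+\kappa_{j-1}$ from Lemma~\ref{Banach_recombination_lemma}) and Lemma~\ref{Banach_recomb_step_cost_lemma}, and sum. The paper differs only in cosmetic organisation (it treats the first step and the case $M=1$ separately, and explicitly assumes the worst case that all $M$ steps are completed), but the substance is identical.
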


\begin{proof}[Proof of Lemma \ref{GRIM_cost_lemma}]
Let $M , \n , \Lambda \in \Z_{\geq 1}$ and $\ep > \ep_0 \geq 0$. 
Take $s_1 , \ldots , s_M \in \Z_{\geq 1}$ and 
$k_1 , \ldots , k_M \in \Z_{\geq 1}$ 
with $k_1 + \ldots + k_M \leq 
\min \left\{ \n - 1 , \Lambda \right\}$.
For $j \in \{1, \ldots , M\}$ define 
$\kappa_j := \sum_{i=1}^j k_i$.
Assume $X$ is a Banach space with dual-space $X^{\ast}$.
Suppose
$\cf := \{ f_1 , \ldots , f_{\n} \} \subset X \setminus \{0\}$ 
and that $\Sigma \subset X^{\ast}$ is finite with 
cardinality $\# \left( \Sigma \right) = \Lambda$.
Let $a_1 , \ldots , a_{\n} \in \R \setminus \{0\}$ and define 
$\vph \in \Span(\cf)$ by $\vph := \sum_{i=1}^{\n} a_i f_i$.
Consider applying the 
\textbf{Banach GRIM} algorithm to approximate $\vph$, 
with $\ep$ as the target accuracy,
$\ep_0$ as the acceptable recombination error, 
$M$ as the maximum number of steps, 
$s_1 , \ldots , s_M$ as the shuffle numbers,
and the integers $k_1 , \ldots , k_M \in \Z_{\geq 1}$
as the integers $k_1 , \ldots , k_M$ chosen in Step 
\ref{Banach_GRIM_i} of the \textbf{Banach GRIM} algorithm.

Since the cardinality of $\cf$ is $\n$, the complexity cost of 
the rescaling and sign alterations required in Step 
\ref{Banach_GRIM_ii} of the \textbf{Banach GRIM} algorithm
is $\cO(\n)$.
The complexity cost of computing the sets 
$\left\{ \sigma(f_i) : \sigma \in \Sigma \right\}$
for $i \in \{ 1 , \ldots , \n \}$ is $\cO(\n \Lambda)$.
Subsequently, the complexity cost of computing the set
$\left\{ \sigma(\vph) : \sigma \in \Sigma \right\}$ is 
$\cO(\n \Lambda)$.
Consequently, the total complexity cost of performing these
computations is $\cO (\n \Lambda)$.

We appeal to Lemma \ref{banach_ext_step_cost_lemma}
to conclude that the complexity cost of performing the 
\textbf{Banach Extension Step} as in Step \ref{Banach_GRIM_iii}
of the \textbf{Banach GRIM} algorithm (i.e. with 
$L' := \emptyset$, $u := 0$, and $m := k_1$) is 
$\cO \left( k_1 \Lambda \right)$.
By appealing to Lemma \ref{Banach_recomb_step_cost_lemma}, 
we conclude that the complexity cost of the use of
the \textbf{Banach Recombination Step} in Step 
\ref{Banach_GRIM_iii}
of the \textbf{Banach GRIM} algorithm 
(i.e. with $L := \Sigma_1$ and $s:=s_1$) is 
$\cO \left( s_1k_1 \left( \n + \Lambda \right) + 
k_1^3 s_1 \log \left( \n / k_1 \right) \right)$.
Hence the complexity cost of 
Step \ref{Banach_GRIM_iii}
of the \textbf{Banach GRIM} algorithm is 
$\cO \left( s_1k_1 \left( \n + \Lambda \right) + 
k_1^3 s_1 \log \left( \n / k_1 \right) \right)$.

In the case that $M = 1$ we can already conclude that the 
total complexity cost of performing the 
\textbf{Banach GRIM} algorithm is 
$\cO \left( k_1 s_1 \left( \n + \Lambda \right) + \n \Lambda +
k_1^3 s_1 \log \left( \frac{\n}{k_1} \right) \right)$
as claimed in \eqref{GRIM_M_steps_cost}.
Now suppose that $M \geq 2$. We assume that all $M$ steps 
of the \textbf{Banach GRIM} algorithm are completed without
early termination since this is the case that will maximise
the complexity cost.
Under this assumption, for $j \in \{1, \ldots , M\}$ 
let $u_j \in \Span(\cf)$ denote the approximation of $\vph$ 
returned after step $j$ of the
\textbf{Banach GRIM} algorithm is completed.
Recall that $\kappa_j := \sum_{i=1}^j k_i$.
Examining the \textbf{Banach GRIM} algorithm, $u_j$ is 
obtained by applying recombination (cf. 
the \emph{recombination thinning} Lemma 
\ref{Banach_recombination_lemma}) to find an approximation 
that is within $\ep_0$ of $\vph$ on a subset of 
$\kappa_j$ linear functionals from $\Sigma$.
Thus, by appealing to the \emph{recombination thinning}
Lemma \ref{Banach_recombination_lemma},
we have that $\# \support(u_j) \leq 1 + \kappa_j$.
For the purpose of computing the maximal complexity cost 
we assume that $\# \support(u_j) = 1 + \kappa_j$.

Let $t \in \{2, \ldots , M\}$ and consider performing 
Step \ref{Banach_GRIM_iv} of the \textbf{Banach GRIM} 
algorithm for the $s$ there as $t$ here.
Since $\# \support ( u_{t-1}) = 1 + \kappa_{t-1}$,
Lemma \ref{banach_ext_step_cost_lemma} tells us that
the complexity cost of performing the 
\textbf{Banach Extension Step} as in Step \ref{Banach_GRIM_iv}
of the \textbf{Banach GRIM} algorithm (i.e. with 
$L' := \Sigma_{t-1}$, $u := u_{t-1}$, 
and $m := k_t$) is 
$\cO \left( \kappa_t \Lambda \right)$.
Lemma \ref{Banach_recomb_step_cost_lemma} yield that 
the complexity cost of the use of the
\textbf{Banach Recombination Step} in Step \ref{Banach_GRIM_iv}
of the \textbf{Banach GRIM} algorithm 
(i.e. with $L := \Sigma_{t}$ and $s:=s_t$) is 
$\cO \left( \kappa_t s_t \left( \n + \Lambda \right) + 
\kappa_t^3 s_t
\log \left( \frac{\n}{\kappa_t} \right) \right)$.
Consequently, the total complexity cost of performing 
Step \ref{Banach_GRIM_iv}
of the \textbf{Banach GRIM} algorithm 
(for $t$ here playing the role of $s$ there) is 
\beq
    \label{step_iv_t_cost}
        \cO \left( \kappa_t s_t 
        \left( \n + \Lambda \right) + 
        \kappa_t^3 s_t
        \log \left( \frac{\n}{\kappa_t } \right) 
        \right).
\eeq
By summing the complexity costs in \eqref{step_iv_t_cost}
over $t \in \{2, \ldots , M\}$ we deduce that the complexity 
cost of Step \ref{Banach_GRIM_iv} of the 
\textbf{Banach GRIM} algorithm is 
\beq
    \label{step_iv_cost}
        \cO \left(
        \sum_{j=2}^M \kappa_j s_j
        \left( \n + \Lambda \right)
        +
        \kappa_j^3 s_j
        \log \left( \frac{\n}{\kappa_j}
        \right)
        \right).
\eeq
Since we have previously observed that the complexity cost 
of performing Steps \ref{Banach_GRIM_i}, \ref{Banach_GRIM_ii},
and \ref{Banach_GRIM_iii} of the \textbf{Banach GRIM} 
algorithm is $\cO\left( k_1 s_1 \left( \n + \Lambda \right) + 
\n \Lambda +
k_1^3 s_1 \log \left( \frac{\n}{k_1} \right) \right)$, 
it follows from \eqref{step_iv_cost} that the complexity cost
of performing the entire \textbf{Banach GRIM} algorithm is
(recalling that $\kappa_1 := k_1$)
\beq
    \label{GRIM_cost_pf}
        \cO \left( \n \Lambda + 
        \sum_{j=1}^M \kappa_j s_j 
        \left( \n + \Lambda \right)
        +
        \kappa_j^3 s_j
        \log \left( \frac{\n}{\kappa_j}
        \right)
        \right)
\eeq
as claimed in \eqref{GRIM_M_steps_cost}.
This completes the proof of Lemma 
\ref{GRIM_cost_lemma}.
\end{proof}
\vskip 2mm
\noindent
We end this subsection by explicitly recording the 
complexity cost estimates resulting from Lemma 
\ref{GRIM_cost_lemma} for some particular choices of parameters.
We assume for both choices that we are in the situation 
that $\n >> \Lambda$.

First, consider the choices that $M:=1$, $k_1 := \Lambda$, 
and $s_1 := 1$.
This corresponds to making a single application of 
recombination to find an approximation of $\vph$ that is
within $\ep_0$ of $\vph$ at every linear functional in $\Sigma$.
Lemma \ref{GRIM_cost_lemma} tells us that the complexity cost
of doing this is 
$\cO \left( \n \Lambda + \Lambda^2 + 
\Lambda^3 \log \left( \frac{\n}{\Lambda} \right) \right)$.

Secondly, consider the choices $M := \Z_{\geq 1}$,
$k_1 = \ldots = k_M = 1$, and arbitrary fixed
$s_1 , \ldots , s_M \in \Z_{\geq 1}$.
Let $L \subset \Sigma$ denote the collection of linear 
functionals that is inductively grown during the 
\textbf{Banach GRIM} algorithm. 
These choices then correspond to adding a single new
linear functional to $L$ at each step. 
It follows, for each 
$j \in \{1, \ldots , M\}$, that 
$\kappa_j := \sum_{i=1}^j k_i = j$.
Lemma \ref{GRIM_cost_lemma} tells us that the complexity cost
of doing this is 
$\cO \left( \n \Lambda + \sum_{j=1}^M j s_j (\n+\Lambda)
+ j^3 s_j \log \left( \frac{\n}{j} \right)\right)$.
If we restrict to a single application 
of recombination at each step (i.e. choosing 
$s_1 = \ldots = s_M = 1$), then the complexity cost becomes
$\cO \left( M^2 \left( \n + \Lambda \right)
+ \sum_{j=1}^M j^3 \log \left( \frac{\n}{j} \right) \right)$.

In particular, if we take $M := \Lambda$ (which corresponds 
to allowing for the possibility that the collection $L$ 
may grow to be the entirety of $\Sigma$) then the 
complexity cost is 
\beq    
    \label{comp_cost_conv_anal_case}
        \cO \left( \Lambda^2 \left( \n + \Lambda \right)
        + \sum_{j=1}^{\Lambda} j^3 \log 
        \left( \frac{\n}{j} \right) \right).
\eeq
If $\n$ is large enough that $\Lambda < e^{-1/3}\n$, then 
the function $x \mapsto x^3 \log (\n/x)$ is increasing on 
the interval $[1,\Lambda]$.
Under these conditions, the complexity cost in 
\eqref{comp_cost_conv_anal_case} is no worse than
\beq
    \label{comp_cost_conv_anal_ideal_assumps}
        \cO \left( \Lambda^2 \n 
        + \Lambda^4 \log \left( \frac{\n}{\Lambda} \right) 
        \right).
\eeq

\section{Convergence Analysis}
\label{conv_gen}
In this section we establish a convergence result for the
\textbf{Banach GRIM} algortihm and discuss its 
consequences. The section is organised as follows. 
In Subsection \ref{notation} we fix notation and conventions
that will be used throughout the entirety of Section 
\ref{conv_gen}.
In Subsection \ref{Main_Theoretical_Result} we state the 
\textbf{Banach GRIM Convergence} theorem \ref{banach_conv}.
This result establishes, in particular, a non-trivial 
upper bound on the maximum number of steps the
\textbf{Banach GRIM} algorithm, under the 
choice $M := \min \left\{ \n - 1 , \Lambda \right\}$ and that 
for every $t \in \{1, \ldots , M \}$ we have $k_t := 1$, 
can run for before terminating. 
We do not consider the potentially beneficial impact of 
considering multiple shuffles at each step, and so we assume 
throughout this section that for every $t \in \{1 , \ldots , M\}$
we have $s_t := 1$.
We additionally discuss some performance 
guarantees available as a consequence of the 
\textbf{Banach GRIM Convergence} theorem \ref{banach_conv}.
In Subsection \ref{Supplementary_Lemmata} we record several 
lemmata recording, in particular, 
estimates for the approximations found at 
each completed step of the algorithm 
(cf. Lemma \ref{banach_robust_lemma}), 
and the properties 
regarding the distribution of the collection of linear 
functionals selected at each completed step of the algorithm
(cf. Lemma \ref{dist_between_interp_functionals}).
In Subsection \ref{Main_Theoretical_Result_Proof} we
use the supplementary lemmata from Subsection 
\ref{Supplementary_Lemmata} to prove the 
\textbf{Banach GRIM Convergence} theorem \ref{banach_conv}.

\subsection{Notation and Conventions}
\label{notation}
In this section we fix notation and conventions that 
will be used throughout the entirety of Section 
\ref{conv_gen}.

Given a real Banach space $Z$, an integer 
$m \in \Z_{\geq 1}$, a subset 
$L = \{ z_1 , \ldots , z_m \} \subset Z$, an element 
$r \in \R_{\geq 0}$, and a norm $\lambda$ on 
$\R^m$, we will refer to the set 
\beq
    \label{l1_dual_space_ball}
        \Span_{\lambda} \left( L, r \right)  
        :=
        \left\{ \sum_{j=1}^m \be_j z_j 
        ~:~ 
        \be := (\be_1 , \ldots , \be_m) \in \R^m 
        \text{ and } 
        \lambda (\be) \leq r \right\}
        \subset Z
\eeq
as the 
\emph{$\lambda$ distance $r$ span of 
$L = \{z_1 , \ldots , z_m\}$ in $Z$}.
If $r = \infty$ then we take 
$\Span_{\lambda} \left( L , \infty \right)$ 
to be the usual linear span of the set 
$L = \{z_1 , \ldots , z_m\}$ in $Z$. 
Further, given constants $c > 0$ and 
$\al > \be \geq 0$, we refer to the set 
\beq
    \label{l1_reach_L}
        \Reach_{\lambda} 
        \left( L , c , \al , \be \right)
        :=
        \twopartdef{\bigcup_{0 \leq \tau \leq \frac{\al}{\be}}
        \ovSpan_{\lambda} \left( L , \tau \right)_{
        \frac{\al - \tau \be}{c}
        }}{\be > 0}
        {\Span(L)_{
        \frac{\al}{c}
        }}{\be = 0}
\eeq
as the \emph{$c$ weighted $\be$ based $\lambda$ 
distance $\al$ reach of $L=\{z_1 , \ldots , z_m\}$ in $Z$}.
Given $V = \{v_1 , \ldots , v_m \} \subset Z$ we let
$\conv (V)$
denote the \emph{closed convex hull} of 
$\left\{ v_1 , \ldots , v_m \right\} $ in $Z$. That is
\beq
    \label{closed_conv_hull}
        \conv (V) 
        := 
        \left\{ \sum_{j=1}^m \be_j v_j ~:~
        \be_1 , \ldots , \be_m \in [0,1] \text{ and }
        \sum_{j=1}^m \be_j = 1 \right\}
        \subset Z.
\eeq
For future use, we record that 
the $||.||_{l^1(\R^m)}$ distance $1$ span of 
$L = \{z_1 , \ldots , z_m\}$ in $Z$ coincides with the closed
convex hull of the set 
$\{z_1 , \ldots , z_m , 0 , -z_1 , \ldots , -z_m \}$ in $Z$, 
and that consequently the $(\al - \be)/c$-fattening of 
this convex hull is contained 
within the $c$-weighted $\be$-based $||\cdot||_{l^1(\R^m)}$ 
distance $\al$-span of $L$, denoted
$\Reach_{||\cdot||_{l^1(\R^m)}} (L, c , \al , \be)$,
whenever $c > 0$ and $\al > \be \geq 0$.

\begin{lemma}
\label{conv_hull_equiv_form_lemma}
Let $Z$ be a Banach space and $m \in \Z_{\geq 1}$.
Assume that $L = \{ z_1 , \ldots , z_m \} \subset Z$ and 
define $V := \left\{
z_1 , \ldots , z_m , 0 , -z_1 , \ldots , -z_m \right\}$. 
Then we have that 
\beq
    \label{conv_hull_equiv_form}
        \conv (V)
        = 
        \Span_{||\cdot||_{l^1(\R^m)} } 
        \left(L , 1\right).
\eeq
Consequently, given any $c>0$ and any $\al > \be \geq 0$
we have that 
\beq
    \label{conv_hull_in_reach}
        \conv(V)_{\frac{\al - \be}{c}} \subset 
        \Reach_{||\cdot||_{l^1(\R^m)}} 
        ( L , c , \al , \be ).
\eeq
\end{lemma}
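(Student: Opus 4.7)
The plan is to establish the equality \eqref{conv_hull_equiv_form} by a double inclusion argument, and then deduce the fattening inclusion \eqref{conv_hull_in_reach} as a short consequence by selecting a suitable value of $\tau$ in the definition \eqref{l1_reach_L} of $\Reach_{\|\cdot\|_{l^1(\R^m)}}$.

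For the inclusion $\conv(V) \subset \Span_{\|\cdot\|_{l^1(\R^m)}}(L, 1)$, I would take an arbitrary element $w \in \conv(V)$ and write it as a convex combination
\[
    w = a_0 \cdot 0 + \sum_{j=1}^m a_j z_j + \sum_{j=1}^m b_j (-z_j),
\]
with $a_0, a_1, \ldots, a_m, b_1, \ldots, b_m \geq 0$ summing to $1$. Setting $\beta_j := a_j - b_j$ yields $w = \sum_{j=1}^m \beta_j z_j$, and the triangle inequality gives $\sum_{j=1}^m |\beta_j| \leq \sum_{j=1}^m (a_j + b_j) \leq 1$, so $w \in \Span_{\|\cdot\|_{l^1(\R^m)}}(L, 1)$. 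Conversely, given $w = \sum_{j=1}^m \beta_j z_j$ with $\sum_j |\beta_j| \leq 1$, I would decompose $\beta_j = \beta_j^+ - \beta_j^-$ into positive and negative parts so that $|\beta_j| = \beta_j^+ + \beta_j^-$, and then set $a_0 := 1 - \sum_{j=1}^m |\beta_j| \geq 0$. The identity
\[
    w = a_0 \cdot 0 + \sum_{j=1}^m \beta_j^+ z_j + \sum_{j=1}^m \beta_j^- (-z_j)
\]
exhibits $w$ as a convex combination of elements of $V$, establishing the reverse inclusion and hence \eqref{conv_hull_equiv_form}.

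For \eqref{conv_hull_in_reach}, I would split into the two cases in \eqref{l1_reach_L}. When $\beta = 0$ the right-hand side is $\Span(L)_{\alpha/c}$, and since \eqref{conv_hull_equiv_form} gives $\conv(V) \subset \Span(L)$, the $(\alpha/c)$-fattening inclusion is immediate. When $\beta > 0$ the hypothesis $\alpha > \beta$ ensures $\alpha/\beta > 1$, so $\tau = 1$ is an admissible choice in the union defining $\Reach_{\|\cdot\|_{l^1(\R^m)}}(L, c, \alpha, \beta)$. At this $\tau$ the slice equals $\overline{\Span}_{\|\cdot\|_{l^1(\R^m)}}(L, 1)_{(\alpha - \beta)/c}$, which by \eqref{conv_hull_equiv_form} is precisely $\overline{\conv(V)}_{(\alpha - \beta)/c}$, yielding the required containment.

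I do not expect any serious obstacles: the first part is a standard bookkeeping identification of convex hulls with $l^1$-balls (splitting coefficients into positive/negative parts and using the zero vector as slack), and the second part is an immediate choice of parameter in the definition of the reach. The only mildly delicate point worth flagging is that one should verify $\Span_{\|\cdot\|_{l^1(\R^m)}}(L, 1)$ is already closed, either by observing it is the continuous image of the compact closed unit $l^1$-ball of $\R^m$ under the linear map $(\beta_1, \ldots, \beta_m) \mapsto \sum_j \beta_j z_j$, or by noting directly that the convex hull of a finite set in a Banach space is a polytope and hence closed; this ensures the closure bar over $\Span_{\|\cdot\|_{l^1(\R^m)}}(L, 1)$ in the definition of $\Reach$ plays no active role.
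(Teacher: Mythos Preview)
Your proposal is correct and follows essentially the same approach as the paper: a double-inclusion argument using positive/negative part decomposition of the coefficients (with the zero vector absorbing the slack) for \eqref{conv_hull_equiv_form}, followed by the case split $\be = 0$ versus $\be > 0$ and the choice $\tau = 1$ for \eqref{conv_hull_in_reach}. Your additional remark about closedness is a nice touch that the paper does not make explicit; in the paper the notations $\Span_{\lambda}$ and $\ovSpan_{\lambda}$ are used interchangeably for the same set, so no closure is actually being taken.
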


\begin{proof}[Proof of Lemma \ref{conv_hull_equiv_form_lemma}]
Let $Z$ be a Banach space, $m \in \Z_{\geq 1}$, and
assume that $L = \{z_1 , \ldots , z_m \} \subset Z$.
Define $z_0 := 0 \in M$ and, for each $j \in \{1 \ldots , m\}$ 
define $z_{-j} := -z_j$. Let
$\cc := \conv \left( \left\{ 
z_1 , \ldots , z_m , 0 , -z_1 , \ldots , -z_m 
\right\} \right)$ and 
$\cd := \Span_{||\cdot||_{l^1(\R^m)} } 
\left( L , 1 \right)$.
We establish \eqref{conv_hull_equiv_form} by proving 
both the inclusions $\cc \subset \cd$ and $\cd \subset \cc$.

We first prove that $\cc \subset \cd$. To do so, 
suppose that $z \in \cc$. Then there are coefficients 
$\be_{-m} , \ldots , \be_{m} \in [0,1]$ such that
$z = \sum_{j=-m}^m \be_j z_j$ and $\sum_{j=-m}^m \be_j = 1$.
Then we have that 
$z = \sum_{j=1}^m \left( \be_j - \be_{-j} \right) z_j$, 
and moreover we may observe via the triangle inequality that 
$\sum_{j=1}^m \left| \be_j - \be_{-j} \right| \leq 
\sum_{j=-m}^m \be_j = 1$.
Consequently, by taking $c_j := \be_j - \be_{-j}$ for 
$j \in \{1, \ldots , m\}$, we see that 
$z \in \cd$ by definition.
The arbitrariness of $z \in \cc$ allows us to conclude that
$\cc \subset \cd$.

In order to prove $\cd \subset \cc$, suppose now 
that $z \in \cd$. Hence there exist 
$c_1 ,\ldots , c_m \in \R$, with 
$\sum_{j=1}^m \left|c_j \right| \leq 1 $, and such that
$z = \sum_{j=1}^m c_j z_j$.
For each $j \in \{1 , \ldots , m\}$, if $c_j >0$ define 
$\be_j := c_j$ and $\be_{-j} := 0$, and if $c_j \leq 0$ define
$\be_j := 0$ and $\be_{-j} := -c_j$.
Observe that, for each $j \in \{1 , \ldots , m\}$, we have 
$\be_j , \be_{-j} \in [0,1]$.
Further, for each $j \in \{1, \ldots , m\}$,
we have $\be_j + \be_{-j} = |c_j|$, and so
$\sum_{j=1}^m \be_j + \be_{-j} = \sum_{j=1}^m |c_j|$.
Finally, define $\be_0 := 1 - \sum_{j=1}^m |c_j| \in [0,1]$ so 
that $\be_{-m} , \ldots , \be_m \in [0,1]$ with 
$\sum_{j=-m}^m \be_j = 1$.
We observe that
$z = \sum_{j=1}^m c_j z_j = \sum_{c_j \geq 0} c_j z_j 
+ \sum_{c_j < 0} (-c_j) (-z_j) = 
\sum_{j=1}^m \be_j z_j + \be_{-j}z_{-j} =
\sum_{j=-m}^m \be_j z_j$ 
where the last equality holds since $z_0 := 0$.
Consequently, $z \in \cc$.
The arbitrariness of $z \in D$ allows us to conclude
$D \subset \cc$.

Together, the inclusions $\cc \subset D$ and $D \subset \cc$
establish that $\cc = D$ as claimed in 
\eqref{conv_hull_equiv_form}.
We complete the proof of Lemma \ref{conv_hull_equiv_form_lemma}
by using \eqref{conv_hull_equiv_form} to 
establish \eqref{conv_hull_in_reach}.

Let $c > 0$ and $\al \geq \be \geq 0$. 
First suppose $\beta > 0$. Then $\al/\be \geq 1$ and 
so 
\beq
    \label{reach_tau=1_case_a}
        \Reach_{||\cdot||_{l^1(\R^m)}}
        (L, c , \al ,\be)
        \stackrel{\eqref{l1_reach_L}}{=}
        \bigcup_{0 \leq \tau \leq \frac{\al}{\be}}
        \ovSpan (L , \tau)_{\frac{\al - \tau \beta}{c}}
        \supset 
        \ovSpan (L , 1)_{\frac{\al - \beta}{c}}
        \stackrel{\eqref{conv_hull_equiv_form}}{=}
        \conv (V)_{\frac{\al - \beta}{c}}.
\eeq 
If $\be = 0$ then 
\beq
    \label{reach_tau=1_case_b}
        \Reach_{||\cdot||_{l^1(\R^m)}}
        (L, c , \al ,0)
        \stackrel{\eqref{l1_reach_L}}{=}
        \Span (L)_{\frac{\al}{c}}
        \supset 
        \conv (V)_{\frac{\al}{c}}
\eeq 
where the last inclusion follows from the observation 
that any element in $V$ is a linear combination of 
the elements in $L = \{z_1 , \ldots , z_m \}$.
Together, \eqref{reach_tau=1_case_a} and 
\eqref{reach_tau=1_case_b} establish that we always have 
$\Reach_{||\cdot||_{l^1(\R^m)}}
(L, c , \al ,\be) \supset \conv (V)$
as claimed in
\eqref{conv_hull_in_reach}.
This completes the proof of Lemma \ref{conv_hull_equiv_form_lemma}.
\end{proof}
\vskip 2mm
\noindent
Given a real Banach space $Z$, an integer $m \in \Z_{\geq 1}$,
a subset $S = \{ z_1 , \ldots , z_m \} \subset Z$, and 
$w = \sum_{j=1}^m w_j z_j \in \Span(S)$ for 
$w_1 , \ldots , w_m \in \R$, we refer to the set 
$\support (w) := \left\{ z_j ~:~
        j \in \{1 , \ldots , \n \} \text{ and } 
        w_j \neq 0 \right\}$
as the \emph{support} of $w$.
The cardinality of $\support(w)$ is equal to the number
of non-zero coefficients in the expansion of $w$ in terms 
of $z_1 , \ldots , z_m$.

Finally, given a real Banach space $Z$, a $r \in \R_{\geq 0}$
and a subset $A \subset Z$, we denote the 
\emph{$r$-fattening} of $A$ in $Z$ by $A_r$. That is, we define 
$A_r := \left\{ z \in Z ~:~ \exists a \in A \text{ with }
||z - a||_Z \leq r \right\}$.

\subsection{Main Theoretical Result}
\label{Main_Theoretical_Result}
In this subsection we state our main theoretical result 
and then discuss its consequences. Our main theoretical result 
is the following \textbf{Banach GRIM Convergence} 
theorem for the \textbf{Banach GRIM} algorithm under the 
choice $M := \min \left\{ \n - 1 , \Lambda \right\}$ and that 
for every $t \in \{1, \ldots , M \}$ we have $k_t := 1$ and 
$s_t := 1$.

\begin{theorem}[\textbf{Banach GRIM Convergence}]
\label{banach_conv}
Assume $X$ is a Banach space with dual-space $X^{\ast}$.
Let $\ep > \ep_0 \geq 0$.
Let $\n , \Lambda \in \Z_{\geq 1}$ and set 
$M := \min \left\{ \n - 1 , \Lambda \right\}$.
Let
$\cf := \{ f_1 , \ldots , f_{\n} \} \subset X \setminus \{0\}$ 
and $\Sigma \subset X^{\ast}$ be a finite subset with cardinality 
$\Lambda$.
Let $a_1 , \ldots , a_{\n} \in \R \setminus \{0\}$ and define 
$\vph \in \Span(\cf)$ and $C > 0$ by
\beq
    \label{banach_conv_thm_varphi_C}
        (\bI) \quad \vph := \sum_{i=1}^{\n} 
        a_i f_i
        \qquad \text{and} \qquad
        (\bII) \quad C := \sum_{i=1}^{\n}
        |a_i| || f_i ||_X > 0.
\eeq
Then there is a non-negative integer 
$N = N(\Sigma,C,\ep,\ep_0)\in \Z_{\geq 0}$, given by
\beq
    \label{banach_conv_thm_r1_packing_num}
        N := 
        \max \left\{ d \in \Z ~:~
        \begin{array}{c}
            \text{There exists }
            \sigma_1 , \ldots , \sigma_d
            \in \Sigma 
            \text{ such that for every } 
            j \in \{1, \ldots , d-1\} \\ 
            \text{we have }
            \sigma_{j+1} \notin 
            \Reach_{||\cdot||_{l^1(\R^{j})}} 
            \left( \{\sigma_1, \ldots , \sigma_{j}\} , 
            2C , \ep , \ep_0 \right) 
        \end{array}
        \right\},
\eeq
for which the following is true. 

Suppose $N \leq M := \min \left\{ \n - 1 , M \right\}$ and
consider applying the \textbf{Banach GRIM} algorithm 
to approximate $\vph$ on $\Sigma$ with
$\ep$ as the accuracy threshold, $\ep_0$ as
the acceptable recombination error bound, 
$M$ as the maximum number of steps, 
$s_1 = \ldots = s_M = 1$ as the shuffle numbers, and with 
the integers $k_1 , \ldots , k_M$ in Step \ref{Banach_GRIM_i}
of the \textbf{Banach GRIM} algorithm all being chosen 
equal to $1$
(cf. \textbf{Banach GRIM} \ref{Banach_GRIM_i}).
Then, after at most $N$ steps the algorithm terminates.
That is, there is some integer $n \in \{ 1 , \ldots , N\}$
for which the \textbf{Banach GRIM} algorithm terminates 
after completing $n$ steps.
Consequently, there are coefficients
$c_1 , \ldots , c_{n+1} \in \R$ and 
indices 
$e(1) , \ldots , e(n+1) \in \{1, \ldots , \n\}$
with
\beq
    \label{banach_conv_thm_coeff_sum}
        \sum_{s=1}^{n+1} \left|c_s\right| 
        || f_{e(s)}||_X
        = C,
\eeq
and such that the element $u \in \Span(\cf)$ 
defined by
\beq
    \label{banach_conv_thm_uM_def_&_acc}
        u := \sum_{s=1}^{n+1}
        c_s f_{e(s)} 
        \qquad \text{satisfies, for every } 
        \sigma \in \Sigma, \text{ that}
        \qquad 
        |\sigma(\vph - u)| \leq \ep. 
\eeq
Further, given any $A > 1$, 
we have,
for every $\sigma \in 
\Reach_{||\cdot||_{l^1(\R^{\Lambda})}} 
\left( \Sigma, 2C, A \ep , \ep \right)$, that 
\beq
    \label{banach_conv_thm_robust_est}
        |\sigma(\vph - u)| \leq 
        A \ep.
\eeq
Finally, if the coefficients 
$a_1 , \ldots , a_{\n} \in \R \setminus \{0\}$ 
corresponding to $\vph$ 
(cf. (\bI) of \eqref{banach_conv_thm_varphi_C})
are all positive (i.e. $a_1 , \ldots , a_{\n} > 0$) then 
the coefficients $c_{1} , \ldots , c_{n+1} \in \R$ 
corresponding to $u$ 
(cf. \eqref{banach_conv_thm_uM_def_&_acc}) are 
all non-negative (i.e. $c_1 , \ldots , c_{n+1} \geq 0$).
\end{theorem}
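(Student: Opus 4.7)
The plan is to show that if the algorithm completes $n$ steps without terminating, then the newly selected functional $\sigma_{n+1}$ must lie outside the reach of the previously selected functionals in the sense of \eqref{banach_conv_thm_r1_packing_num}, which together with the maximality of $N$ forces termination within $N$ steps. The core ingredients are two bounds that apply to the approximation $u_n$ returned after step $n$: first, $|\sigma(\vph - u_n)| \leq \ep_0$ for every $\sigma \in \Sigma_n := \{\sigma_1, \ldots, \sigma_n\}$ (by construction via the \textbf{Banach Recombination Step}); and second, $||\vph - u_n||_X \leq 2C$, which I would derive from the triangle inequality by combining $||\vph||_X \leq C$ with a bound $||u_n||_X \leq C$ obtained from the coefficient-sum preservation \eqref{Banach_recomb_coeff_sum} of the \emph{Recombination Thinning} Lemma \ref{Banach_recombination_lemma}, applied after the rescaling in Step \ref{Banach_GRIM_ii} of the algorithm so that every feature has unit $X$-norm and every coefficient is non-negative.

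The central intermediate claim is then: for every $\sigma \in \Reach_{||\cdot||_{l^1(\R^n)}}(\Sigma_n, 2C, \ep, \ep_0)$ one has $|\sigma(\vph - u_n)| \leq \ep$. To prove this, I would unpack the definition \eqref{l1_reach_L} to write $\sigma = \mu + \rho$, where $\mu = \sum_j \be_j \sigma_j$ lies in $\ovSpan_{||\cdot||_{l^1(\R^n)}}(\Sigma_n, t)$ for some $t \in [0, \ep/\ep_0]$ and $||\rho||_{X^\ast} \leq (\ep - t \ep_0)/(2C)$; the degenerate case $\ep_0 = 0$ is handled separately using the second branch of \eqref{l1_reach_L}, where $\mu \in \Span(\Sigma_n)$ is unconstrained but annihilates $\vph - u_n$ exactly. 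Then linearity and the first bound above give $|\mu(\vph - u_n)| \leq t \ep_0$, while duality gives $|\rho(\vph - u_n)| \leq ||\rho||_{X^\ast}\, ||\vph - u_n||_X \leq \ep - t\ep_0$, and the two sum to $\ep$.

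Given this estimate, the main argument is a simple dichotomy at each step: either the algorithm has terminated, or the argmax selection in \eqref{banach_ext_step_one} forces $|\sigma_{n+1}(\vph - u_n)| > \ep$, which by contraposition implies $\sigma_{n+1} \notin \Reach_{||\cdot||_{l^1(\R^n)}}(\Sigma_n, 2C, \ep, \ep_0)$. If the algorithm ran for more than $N$ steps, the produced sequence $\sigma_1, \sigma_2, \ldots$ would realise a value strictly greater than $N$ in the maximisation \eqref{banach_conv_thm_r1_packing_num}, contradicting the definition of $N$. The robustness bound \eqref{banach_conv_thm_robust_est} is proved by an identical decomposition argument applied to the final approximation, with $\Sigma$ in place of $\Sigma_n$, base bound $\ep$ in place of $\ep_0$, and target $A\ep$ in place of $\ep$. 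The identity \eqref{banach_conv_thm_coeff_sum} is inherited directly from \eqref{Banach_recomb_coeff_sum} via the rescaling step, and the positivity preservation under $a_1, \ldots, a_{\n} > 0$ follows because, in that case, Step \ref{Banach_GRIM_ii} performs no sign flips and recombination leaves the non-negative orthant invariant (cf. Section \ref{recomb_thin}).

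The main obstacle is the reach-set estimate in the second paragraph: one must unpack the piecewise definition \eqref{l1_reach_L} carefully (handling $\ep_0 = 0$ and $\ep_0 > 0$ on equal footing), match the parameters $(2C, \ep, \ep_0)$ exactly to the decomposition $\sigma = \mu + \rho$, and recognise that the affine dependence $\al - \tau\be$ appearing in \eqref{l1_reach_L} is engineered precisely so that the bounds $t\ep_0$ and $\ep - t\ep_0$ telescope to $\ep$. The factor $2C$ enters as the universal bound on $||\vph - u_n||_X$ and is what ties the span and dual-norm regimes together; once this estimate is in place, the rest of the theorem is essentially bookkeeping.
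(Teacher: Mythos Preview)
Your proposal is correct and follows essentially the same approach as the paper: the paper factors your argument into three supplementary lemmata (the reach-set estimate you describe is Lemma~\ref{banach_robust_lemma}, the separation $\sigma_{l+1}\notin\Reach(\Sigma_l,2C,\ep,\ep_0)$ is Lemma~\ref{dist_between_interp_functionals}, and the contradiction with the maximality of $N$ is Lemma~\ref{banach_GRIM_step_num_bound_lemma}), then combines them exactly as you outline, including the reapplication of the reach-set estimate with $(\Sigma,A\ep,\ep)$ for \eqref{banach_conv_thm_robust_est}. The only difference is organisational, not mathematical.
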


\begin{remark}
\label{section2_notation_recap}
For the readers convenience, we recall the specific
notation from Subsection \ref{notation} utilised in 
the \textbf{Banach GRIM Convergence} Theorem \ref{banach_conv}.
Firstly, given $l \in \Z_{\geq 1}$, a subset 
$L = \{ \sigma_1 , \ldots , \sigma_l \} \subset X^{\ast}$, 
and $s \in \R_{\geq 0} $ we have 
\beq
    \label{ovSpan_def_recall_l_statement}
        \ovSpan_{||\cdot||_{l^1(\R^{l})}}  
        \left( L , s \right)
        := 
        \left\{ \sum_{s=1}^{l} c_s \sigma_s ~:~ 
        c = (c_1 , \ldots , c_{l}) \in \R^{l} \text{ and } 
        ||c||_{l^1(\R^{l})} \leq s \right\},
\eeq
whilst 
$\ovSpan_{||\cdot||_{l^1(\R^{l})} } \left(
L , \infty \right)
:= \Span \left( L \right)$.
Moreover, given $r \in \R_{\geq 0}$ we define  
$\ovSpan_{||\cdot||_{l^1(\R^{l})}} \left( L , s\right)_r$ to be 
the $r$-fattening of 
$\ovSpan_{||\cdot||_{l^1(\R^{l})}} \left( L , s\right)$ 
in $X^{\ast}$. That is
\beq
    \label{ovSpan_r_fat_def_recall_l_statement}
        \ovSpan_{||\cdot||_{l^1(\R^{l})}}
        \left( L , s \right)_r 
        := 
        \left\{ \sigma \in X^{\ast} :
        \text{There exists }
            \sigma' \in 
            \ovSpan_{||\cdot||_{l^1(\R^{l})}}
            \left( L , s \right)
            \text{ with }
            || \sigma - \sigma' ||_{X^{\ast}} 
            \leq r  
        \right\}.
\eeq
In particular, we have 
$\ovSpan_{||\cdot||_{l^1(\R^{l})}} \left( L , s \right)_0
:= \ovSpan_{||\cdot||_{l^1(\R^{l})}} \left( L , s \right)$.
Finally, given constants $c>0$ and $\al \geq \be \geq 0$ 
we have
\beq
    \label{Reach_def_recap}
        \Reach_{||\cdot||_{l^1(\R^l)}} 
        \left( L , c , \al , \be \right)
        :=
        \twopartdef{\bigcup_{0 \leq s \leq \frac{\al}{\be}}
        \ovSpan_{||\cdot||_{l^1(\R^l)}} 
        \left( L , s \right)_{\frac{\al - s \be}{c}}}
        {\be > 0}
        {\Span(L)_{\frac{\al}{c}}}
        {\be=0.}
\eeq
\end{remark}
\vskip 10pt
\noindent
The \textbf{Banach GRIM Convergence} Theorem \ref{banach_conv}
tells us that the maximum number of steps that the 
\textbf{Banach GRIM} algorithm can complete before 
terminating is 
\beq
    \label{max_step_num_from_thm}
        N := 
        \max \left\{ d \in \Z ~:~
        \begin{array}{c}
            \text{There exists }
            \sigma_1 , \ldots , \sigma_d \in \Sigma 
            \text{ such that for every } 
            j \in \{1, \ldots , d-1\} \\ 
            \text{we have }
            \sigma_{j+1} \notin 
            \Reach_{||\cdot||_{l^1(\R^{j})}} 
            \left( \{\sigma_1, \ldots , \sigma_{j}\} , 
            2C , \ep , \ep_0 \right) 
        \end{array}
        \right\}.
\eeq
Recall from Theorem \ref{banach_conv} 
(cf. \eqref{banach_conv_thm_uM_def_&_acc}) that
the number of elements from $\cf$ required to 
yield the desired approximation of $\vph$ is 
no greater than $N + 1$.
Consequently, via Theorem \ref{banach_conv},
not only can we conclude that we theoretically can use no more 
than $N + 1$
elements from $\cf$ to approximate $\vph$ throughout 
$\Sigma$ with an accuracy of $\ep$, but also that the 
\textbf{Banach GRIM} algorithm
will actually construct such an approximation.
In particular, if $N < \n - 1$ then we are guaranteed 
that the algorithm will find a 
linear combination of \emph{fewer} than $\n$
of the elements $f_1 , \ldots , f_{\n}$ that is 
within $\ep$ of $\varphi$ throughout $\Sigma$.

Further we observe that $N$ defined in 
\eqref{max_step_num_from_thm} depends on both the 
features $\cf$ through the constant $C$ defined 
in \eqref{banach_conv_thm_varphi_C} and the data 
$\Sigma$. The constant $C$ itself depends only on the 
weights $a_1 , \ldots , a_{\n} \in \R$ and 
the values $||f_1||_X , \ldots , ||f_{\n}||_X \in \R_{>0}$.
No additional constraints are imposed on the collection of 
features $\cf$; in particular, we do not assume 
the existence of a linear combination of fewer than $\n$ 
of the features in $\cf$ giving a good approximation of 
$\vph$ throughout $\Sigma$.

We now relate the quantity $N$ defined in 
\eqref{max_step_num_from_thm} to other geometric 
properties of the subset $\Sigma \subset X^{\ast}$.

For this purpose, let 
$\sigma_1 , \ldots , \sigma_N \in \Sigma$ and, 
for each $j \in \{1, \ldots , N\}$, define
$\Sigma_j := \{ \sigma_1 , \ldots , \sigma_j \}$.
Suppose $N \geq 2$ and that 
for every $j \in \{1, \ldots , N-1\}$ we have 
$\sigma_{j+1} \notin 
\Reach_{||\cdot||_{l^1(\R^{j})}} 
\left( \Sigma_j , 2C , \ep , \ep_0 \right)$.

In the case that $\ep_0 = 0$ this means, 
recalling \eqref{Reach_def_recap}, that
$\sigma_{j+1} \notin \Reach_{||\cdot||_{l^1(\R^{j})}} 
\left( \Sigma_j , 2C , \ep , 0 \right) 
= \Span( \Sigma_j )_{\ep/2C}$.
Hence, when $\ep_0 = 0$, the integer $N$ defined in 
\eqref{max_step_num_from_thm} is given by
\beq
    \label{max_step_num_from_thm_ep0=0}
        N = 
        \max \left\{ d \in \Z ~:~
        \begin{array}{c}
            \text{There exists }
            \sigma_1 , \ldots , \sigma_d \in \Sigma 
            \text{ such that for every } 
            j \in \{1, \ldots , d-1\} \\ 
            \text{we have }
            \sigma_{j+1} \notin \Span(\Sigma_j)_{\frac{\ep}{2C}} 
        \end{array}
        \right\}.
\eeq
Thus the maximum number of steps before the \textbf{Banach GRIM}
algorithm terminates is determined by the maximum 
dimension of a linear subspace $\Pi \subset X^{\ast}$ 
that has a basis consisting of elements in $\Sigma$, and yet
its $\ep/2C$-fattening fails to capture $\Sigma$ in the sense 
that $\Sigma \cap \Pi_{\ep/2C} \neq \Sigma$.

Now suppose that $\ep_0 > 0$. In this case we have that 
(cf. \eqref{Reach_def_recap})
\beq
    \label{Reach_when_ep0>0}
        \Reach_{||\cdot||_{l^1(\R^j)}} 
        \left( \Sigma_j , 2C , \ep , \ep_0 \right)
        =
        \bigcup_{0 \leq t \leq \frac{\ep}{\ep_0}}
        \ovSpan_{||\cdot||_{l^1(\R^j)}} 
        \left( \Sigma_j , t \right)_{
        \frac{\ep - t \ep_0}{2C}}
\eeq
which no longer contains the full linear subspace 
$\Span(\Sigma_j)$.
However, by appealing to Lemma \ref{conv_hull_equiv_form_lemma}, 
we conclude that (cf. \eqref{conv_hull_in_reach})
\beq
    \label{Reach_info_ep0>0}
        \conv(V_j)_{\frac{\ep-\ep_0}{2C}} \subset 
        \Reach_{||\cdot||_{l^1(\R^j)}} 
        \left( \Sigma_j , 2C , \ep , \ep_0 \right)
        \quad \text{for} \quad
        V_j := \left\{ -\sigma_j , \ldots , -\sigma_1 , 0 , 
        \sigma_1 , \ldots , \sigma_j \right\} \subset X^{\ast}
\eeq
and we use the notation $\conv(V_j)$ to denote the 
closed convex hull of $V_j$ in $X^{\ast}$.
Consequently, if we define
\beq
    \label{N_conv_def}
        N_{\conv} :=
        \max \left\{ 
        d \in \Z ~:~ 
        \begin{array}{c}
        \text{There exists }
            \sigma_1 , \ldots , \sigma_d \in \Sigma 
            \text{ such that } \forall
            j \in \{1, \ldots , d-1\} 
            \text{ we have} \\
            \sigma_{j+1} \notin 
            \conv (V_j)_{\frac{\ep-\ep_0}{2C}} 
            \text{ for }
            V_j := \left\{ -\sigma_j , \ldots , -\sigma_1 , 0 , 
            \sigma_1 , \ldots , \sigma_j \right\}
            \subset X^{\ast}
        \end{array}
        \right\}, 
\eeq
then the integer $N$ defined in \eqref{max_step_num_from_thm}
is bounded above by $N_{\conv}$, i.e. $N \leq N_{\conv}$.

We can control $N_{\conv}$
defined in \eqref{N_conv_def} by particular 
\emph{packing} and \emph{covering} numbers of $\Sigma$.
To elaborate, if $Z$ is a Banach space and $\cU \subset Z$, 
then for any $r > 0$ the $r$-packing number
of $\cU$, denoted by $N_{\pack}(\cU,Z,r)$, and 
the $r$-covering number of $\cU$, denoted by 
$N_{\cov}(\cU,Z,r)$, are given by
\beq
    \label{intro_gen_pack&cov_num_def}
        \begin{aligned}
            &N_{\pack}(\cU,Z,r) := 
            \max \left\{ d \in \Z : \exists 
            z_1 , \ldots , z_d \in \cU \text{ such that }
            || z_a - z_b ||_Z > r \text{ whenever }
            a \neq b \right\} \quad \text{and} \\
            &N_{\cov}(\cU,Z,r) := 
            \min \left\{ d \in \Z : \exists 
            z_1 , \ldots , z_d \in \cU 
            \text{ such that we have the inclusion }
            \cU \subset \bigcup_{j=1}^d 
            \ovB_Z (z_j , r) \right\}
        \end{aligned}
\eeq
respectively.
Our convention throughout this subsection is that balls 
denoted by $\B$ are taken to be open, whilst those denoted
by $\ovB$ are taken to be closed.

It is now an immediate consequence of \eqref{N_conv_def} 
and \eqref{intro_gen_pack&cov_num_def} that 
$N_{\conv}$ is no greater than the $(\ep - \ep_0)/2C$-packing 
number of $\Sigma$, i.e. 
$N_{\conv} \leq N_{\pack} \left( \Sigma , X^{\ast} , 
(\ep - \ep_0)/2C \right)$. 
Moreover, using the well-known fact that 
the quantities defined in 
\eqref{intro_gen_pack&cov_num_def} satisfy, for any $r > 0$, 
that
$N_{\pack}(\cU,Z,2r) \leq N_{\cov}(\cU,Z,r) 
\leq N_{\pack}(\cU,Z,r)$, we 
may additionally conclude that 
$N_{\conv} \leq 
N_{\pack} \left( \Sigma , X^{\ast} , 
(\ep - \ep_0)/2C \right) \leq 
N_{\cov} \left( \Sigma , X^{\ast} , 
(\ep - \ep_0)/4C \right)$.
Consequently, both the 
$(\ep - \ep_0)/2C$-packing number and the 
$(\ep - \ep_0)/4C$-covering number of $\Sigma$ provide
upper bounds on the number of steps that the 
\textbf{Banach GRIM} algorithm can complete before 
terminating.

In summary, we have that the maximum number of steps 
$N$ that the \textbf{Banach GRIM} algorithm can complete
before terminating (cf. \eqref{max_step_num_from_thm})
satisfies that
\beq
    \label{N_upper_bounds_summary}
        N \leq 
        N_{\conv}
        \leq 
        N_{\pack} \left( \Sigma , X^{\ast} , 
        \frac{\ep - \ep_0}{2C} \right)
        \leq
        N_{\cov} \left( \Sigma , X^{\ast} , 
        \frac{\ep-\ep_0}{4C} \right).
\eeq
Thus the geometric quantities related to $\Sigma$ 
appearing in \eqref{N_upper_bounds_summary} provide 
upper bounds on the number of elements from $\cf$ 
appearing in the approximation found by the 
\textbf{Banach GRIM algorithm}.
Explicit estimates for these geometric quantities allow us
to deduce \emph{worst-case} bounds for the performance 
of the \textbf{Banach GRIM} algorithm; we will 
momentarily establish such bounds for the task of 
\emph{kernel quadrature} via estimates for the covering 
number of the unit ball of a \emph{Reproducing Kernel Hilbert 
Space} (RKHS) covered in \cite{JJWWY20}.

Before considering the particular setting of 
kernel quadrature, we observe how the direction of the 
bounds in Theorem \ref{banach_conv} can be, in some sense, 
reversed. To make this precise, observe that 
Theorem \ref{banach_conv} fixes an $\ep > 0$ as the 
accuracy we desire for the approximation, and then provides
an upper bound on the number of features from the collection 
$\cf$ that are used by the \textbf{Banach GRIM} algorithm 
to construct an approximation of $\vph$ that is within
$\ep$ of $\vph$ throughout $\Sigma$. 
However it would be useful to know, for a given fixed 
$n_0 \in \{2 , \ldots , \n\}$, how well the 
\textbf{Banach GRIM} algorithm can approximate $\vph$ 
throughout $\Sigma$ using no greater than $n_0$ of the 
features from $\cf$.
We now illustrate how Theorem \ref{banach_conv} provides
such information.

Consider a fixed $n_0 \in \{2 , \ldots , \n\}$ and let 
$\be_0 = \be_0(n_0 , C , \Sigma,\ep_0) > 0$ be defined by
\beq
    \label{intro_opp_implication_req}
        \be_0 := 
        \min \left\{ \lambda >  \ep_0 ~:~
        N_{\lambda} \leq n_0 - 1
        \right\}
\eeq
where $N_{\lambda}$ denotes the integer defined in \eqref{max_step_num_from_thm} 
for the constant $\ep > \ep_0$ there as $\lambda$ here.
Then, by applying the \textbf{Banach GRIM} algorithm under 
the same assumptions as in Theorem \ref{banach_conv} and the 
additional choice that $\ep := \be_0$, we may conclude from 
Theorem \ref{banach_conv} that the \textbf{Banach GRIM} 
algorithm terminates after
no more than $n_0 - 1$ steps. Consequently, the algorithm 
returns an approximation $u$ of $\vph$ that is a linear
combination of at most $n_0$ of the features in $\cf$, 
and that is within $\be_0$ of $\vph$ on $\Sigma$ in the 
sense that for every $\sigma \in \Sigma$ we have 
$|\sigma(\vph - u)| \leq \be_0$. 
Hence the relation given in \eqref{intro_opp_implication_req}
provides a guaranteed accuracy 
$\be_0 = \be_0(n_0,C,\Sigma,\ep_0) > 0$ for how well the 
\textbf{Banach GRIM} algorithm can approximate $\vph$ 
with the additional constraint that the approximation is 
a linear combination of no greater than $n_0$ of the 
features in $\cf$. This guarantee ensures both that 
there is a linear combination of at most $n_0$ of the 
features in $\cf$ that is within $\be_0$ of $\vph$ 
throughout $\Sigma$ and that the \textbf{Banach GRIM} 
algorithm will find such a linear combination.

For the remainder of this subsection we turn our attention
to the particular setting of kernel quadrature.
assume that $\cX = \{ x_1 , \ldots , x_{\n} \}$ 
is a finite set of some finite dimensional Euclidean space
$\R^d$, and that $k : \cX \times \cX \to \R$
is a continuous symmetric positive semi-definite kernel that
is bounded above by $1$. 
For each $i \in \{1, \ldots , \n\}$ define a continuous function
$k_{x_i} : \cX \to \R$ by setting, for $z \in \cX$, 
$k_{x_i}(z) := k(z,x_i)$. 
Define 
$K := \left\{ k_{x_i} : i \in \{1 , \ldots , \n\} \right\} 
\subset C^{0}(\cX)$
and let $\cH_k$ denote the RKHS associated to $k$.
In this case it is known that
$\cH_k = \Span (K)$ and hence 
$K \subset \ovB_{\cH_k}(0,1)$.

Suppose $a_1 , \ldots , a_{\n} > 0$ so that 
$\vph := \sum_{i=1}^{\n} a_i \de_{x_i} \in \bbP[\cX]$.
Under the choice that $X := \m[\cX]$, we observe 
that the constant $C$ corresponding to the definition
in \eqref{banach_conv_thm_varphi_C} 
satisfies that $C = 1$. 
Recall that $C^{0}(\cX) \subset \m[\cX]^{\ast}$ via 
the identification of an element $\psi \in C^{0}(\cX)$
with the linear functional $\m[\cX] \to \R$ given by
$\nu \mapsto \nu[\psi] := \int_{\cX} \psi(z) d\nu(z)$.
We abuse notation slightly by referring to both the continuous
function in $C^0(\cX)$ and the associated linear functional
$\m[\cX] \to \R$ as $\psi$.
By defining $\Sigma := K$, 
the kernel quadrature problem in this setting is to 
find an empirical measure $u \in \bbP[\cX]$ whose support
is a strict subset of the support of $\vph$ and such that
$\left| f(\vph - u) \right| \leq \ep$ for every
$f \in \Sigma$.

Recalling \eqref{N_upper_bounds_summary}, the 
performance of the \textbf{Banach GRIM} algorithm for this task
is controlled by the pointwise $(\ep-\ep_0)/4$-covering number of 
$\Sigma$, i.e. by 
$N_{\cov} ( \Sigma, C^0(\cX) , (\ep-\ep_0)/4)$.
That is, there is an integer $M \leq
N_{\cov} ( \Sigma, C^0(\cX) , (\ep - \ep_0)/4)$
such that the \textbf{Banach GRIM} algorithm finds weights 
$c_1 , \ldots , c_{M+1} \geq 0$
and indices $e(1) , \ldots , e(M+1) \in \{1,\ldots ,\n\}$
such that $u := \sum_{s=1}^{M+1} c_s \de_{x_{e(s)}}$
is a probability measure in $\bbP[\cX]$
satisfying, for every 
$f \in \Sigma$, that 
$|\vph(f) - u(f)| \leq \ep$.
A consequence of $\Sigma \subset \ovB_{\cH_k}(0,1)$ is that
the $(\ep-\ep_0)/4$-covering number of 
$\Sigma$ is itself controlled by the 
$(\ep-\ep_0)/4$-covering number of $\ovB_{\cH_k}(0,1)$, 
denoted by
$N_{\cov} \left( \ovB_{\cH_k}(0,1), C^0(\cX) , (\ep-\ep_0)/4
\right)$.

Many authors have considered estimating the covering 
number of the unit ball of a RKHS, see the works 
\cite{Zho02,CSW11,Kuh11,SS13,HLLL18,Suz18,JJWWY20,FS21} 
for example. In our particular setting, the covering 
number estimates in \cite{JJWWY20} yield explicit 
performance guarantees for the \textbf{Banach GRIM}
as we now illustrate.

In our setting the 
kernel has a pointwise convergent Mercer decomposition
$k(x,y) = \sum_{m=1}^{\infty} \lambda_m e_m(x)e_m(y)$
with $\lambda_1 \geq \lambda_2 \geq \ldots \geq 0$
and $\{ e_m \}_{m=1}^{\infty} \subset L^2(\cX)$
being orthonormal \cite{CS08,SS12}. The pairs 
$\left\{ (\lambda_m , e_m) \right\}_{m=1}^{\infty}$
are the eigenpairs for the operator
$T_k : L^2(\cX) \to L^2(\cX)$ defined for $f \in L^2(\cX)$
by $T_k[f] := \int_{\cX} k(\cdot,y) f(y) dy$.
We assume that the eigenfunctions 
$\{e_m\}_{m=1}^{\infty}$ are uniformly bounded in the 
sense that for every $m \in \Z_{\geq 1}$ and every 
$x \in \cX$ we have $|e_m(x)| \leq C_0$ for some constant
$C_0 > 0$. 
Finally, we assume that the eigenvalues 
$\{ \lambda_m\}_{m=1}^{\infty}$ decay exponentially as
$m$ increases in the sense that for every $m \in \Z_{\geq 1}$
we have $\lambda_m \leq C_1 e^{-C_2j}$
for constants $C_1 , C_2 > 0$.
These assumptions are satisfied, for example, by the 
\emph{squared exponential} (\emph{Radial Basis Function}) 
kernel $k(s,t) := e^{-(s-t)^2}$ \cite{JJWWY20}; more 
explicit estimates for this particular choice of kernel 
may be found in \cite{FS21}.

Given any $r \in (0,1)$, it is established in \cite{JJWWY20}
(cf. Lemma D.2 of \cite{JJWWY20}) that under these 
assumptions we have
\beq
    \label{intro_RKHS_cov_num_bd_example}
        \log N_{\cov}(\ovB_{\cH_k}(0,1), C^0(\cX), r)
        \leq C_3 \left( \log \left( \frac{1}{r} \right)
        + C_4 \right)^{2}
\eeq
for constants $C_3 = C_3(C_0,C_1,C_2) > 0$
and $C_4 = C_4(C_0,C_1,C_2) > 0$.
Assuming that $\ep < 4$, by appealing to 
\eqref{intro_RKHS_cov_num_bd_example} for the choice 
$r:= (\ep - \ep_0)/4$ we may conclude that if
$C_3 \left( \log \left( \frac{4}{\ep-\ep_0} \right)
+ C_4 \right)^{2} < \log (\n - 1)$
then the \textbf{Banach GRIM} algorithm will 
return a probability measure $u \in \bbP[\cX]$ given by 
a linear combination of fewer than $\n$ of the point 
masses $\de_{x_1} , \ldots , \de_{x_{\n}}$ 
satisfying, for every $f \in \Sigma$,
that $|\vph(f) - u(f)| \leq \ep$.

Alternatively, given $n_0 \in \{2 , \ldots , \n \}$
define $\be_0 = \be_0 (C_0,C_1,C_2,n_0,\ep_0) > 0$ by
\beq
    \label{intro_kernel_quadrature_beta_0}
        \be_0 := 4e^{C_4} 
        e^{
        -\left(\frac{\log(n_0-1)}{C_3}\right)^{
        \frac{1}{2}
        } } + \ep_0 = 4e^{C_4}
        \left( n_0 - 1\right)^{-\left(
        C_3 \log(n_0 -1)
        \right)^{-\frac{1}{2}}} + \ep_0 > \ep_0.
\eeq
Provided $n_0 > 1 + e^{C_3C_4^2}$ we have that
$\frac{\be_0 - \ep_0}{4} \in (0,1)$.
In this case we observe that 
\eqref{intro_RKHS_cov_num_bd_example} and 
\eqref{intro_kernel_quadrature_beta_0}
yield that 
\beq
    \label{partic_cov_num_ub}
        \log N_{\cov} \left( \ovB_{\cH_k}(0,1) , 
        C^{0}(\cX) , \frac{\be_0 - \ep_0}{4}
        \right) 
        \leq 
        \log (n_0-1).
\eeq
We deduce from Theorem \ref{banach_conv}, with 
$\ep := \be_0$, that the algorithm finds a probability 
measure $u \in \bbP[\cX]$ given by 
a linear combination of no more than $n_0$ of the point 
masses $\de_{x_1} , \ldots , \de_{x_{\n}}$ 
satisfying, for every $f \in \Sigma$,
that $|\vph(f) - u(f)| \leq \be_0$.

As $n_0$ increases, $\be_0$ defined in 
\eqref{intro_kernel_quadrature_beta_0} eventually decays 
slower than $n_0^{-a}$ for any $a > 0$.
This poor asymptotic behaviour is not unexpected for 
an estimate that is itself a combination of worst-case 
scenario estimates. However, we may still observe that
for any integer $A \in \Z_{\geq 1}$ large enough to 
ensure that $1 + e^{C_3C_4^2} < n_0 \leq 1 + e^{A^2/C_3}$
(which in particular requires $A > C_3C_4$), 
that $\be_0 \leq 4e^{C_4} (n_0-1)^{-1/A}$.

.

\subsection{Supplementary Lemmata}
\label{Supplementary_Lemmata}
In this subsection we record several lemmata that will be
used during our proof of the
\emph{Banach GRIM Convergence} Theorem \ref{banach_conv} in 
Subsection \ref{Main_Theoretical_Result_Proof}.
The following result records the consequences arising 
from knowing that an approximation $u \in \Span(\cf)$
is \emph{close} to $\vph$ at a finite set of linear 
functionals in $X^{\ast}$. 

\begin{lemma}[\textbf{Finite Influenced Set}]
\label{banach_robust_lemma}
Assume that $X$ is a Banach space with dual-space $X^{\ast}$. 
Let $\th > \th_0 \geq 0$, $\n \in \Z_{\geq 1}$, and
$\cf = \{f_1 , \ldots , f_\n\} \subset X \setminus \{0\}$.
Let $a_1 , \ldots , a_{\n} \in \R \setminus \{0\}$  
and define $\vph \in \Span(\cf)$ and $C > 0$ by
\beq
    \label{banach_robust_lemma_varphi_const}
        (\bI) \quad \varphi := 
        \sum_{i=1}^{\n} a_i f_i
        \qquad \text{and} \qquad
        (\bII) \quad C := \sum_{i=1}^{\n} 
        |a_i| || f_i ||_X > 0.
\eeq
Suppose that $d \in \Z_{\geq 1}$ with
$L = \left\{ \sigma_1 , \ldots , \sigma_d \right\} 
\subset X^{\ast}$, 
and that $u \in \Span(\cf)$ satisfies both that
$||u||_X \leq C$ and, for every $\sigma \in L$, that
$|\sigma(\vph - u)| \leq \th_0$. Then, using 
our notation conventions from Section \ref{notation} 
(see also Remark \ref{section2_notation_recap}), for every 
$\sigma \in \Reach_{||\cdot||_{l^1(\R^d)}} 
\left( L , 2C , \th , \th_0 \right)$ we have that
\beq
    \label{banach_robust_lemma_conc}
        |\sigma(\vph - u)| \leq \th.
\eeq 
\end{lemma}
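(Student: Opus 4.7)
The plan is to exploit the fact that any $\sigma$ in the reach decomposes as a bounded $l^1$-combination of the $\sigma_j \in L$ plus a small dual-norm error, and to pair each piece of the decomposition against the two available bounds: the uniform bound $\theta_0$ at the functionals in $L$, and the global bound $||\vph - u||_X \leq 2C$ obtained from the triangle inequality.

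First I would observe that $||\vph||_X \leq \sum_{i=1}^{\n} |a_i| \, ||f_i||_X = C$ by the triangle inequality and the definition of $C$ in $(\bII)$ of \eqref{banach_robust_lemma_varphi_const}. Combined with the hypothesis $||u||_X \leq C$, this yields $||\vph - u||_X \leq 2C$. This is the only place where the assumption $||u||_X \leq C$ enters, and it is precisely what matches the weighting factor $2C$ appearing in $\Reach_{||\cdot||_{l^1(\R^d)}}(L, 2C, \theta, \theta_0)$.

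Next, I would split into the cases $\theta_0 > 0$ and $\theta_0 = 0$ dictated by the definition \eqref{Reach_def_recap}. In the case $\theta_0 > 0$, fix $\sigma \in \Reach_{||\cdot||_{l^1(\R^d)}}(L, 2C, \theta, \theta_0)$. By definition there exists $\tau \in [0, \theta/\theta_0]$ such that $\sigma \in \ovSpan_{||\cdot||_{l^1(\R^d)}}(L, \tau)_{(\theta - \tau \theta_0)/(2C)}$, and hence, using \eqref{ovSpan_def_recall_l_statement}--\eqref{ovSpan_r_fat_def_recall_l_statement}, there exist coefficients $c_1, \ldots, c_d \in \R$ with $\sum_{j=1}^d |c_j| \leq \tau$ and an element $\rho \in X^{\ast}$ with $||\rho||_{X^{\ast}} \leq (\theta - \tau \theta_0)/(2C)$ such that $\sigma = \sum_{j=1}^d c_j \sigma_j + \rho$. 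Then the triangle inequality gives
\begin{equation*}
|\sigma(\vph - u)| \leq \sum_{j=1}^d |c_j| \, |\sigma_j(\vph - u)| + ||\rho||_{X^{\ast}} \, ||\vph - u||_X \leq \tau \theta_0 + \frac{\theta - \tau \theta_0}{2C} \cdot 2C = \theta,
\end{equation*}
using $|\sigma_j(\vph-u)| \leq \theta_0$ from the hypothesis, $\sum |c_j| \leq \tau$, and the bound $||\vph - u||_X \leq 2C$ from the first paragraph.

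The case $\theta_0 = 0$ is even simpler: the hypothesis forces $\sigma_j(\vph - u) = 0$ for every $j$, so any element of $\Span(L)$ annihilates $\vph - u$; writing $\sigma = \sigma' + \rho$ with $\sigma' \in \Span(L)$ and $||\rho||_{X^{\ast}} \leq \theta/(2C)$ yields $|\sigma(\vph - u)| = |\rho(\vph - u)| \leq (\theta/(2C)) \cdot 2C = \theta$. Both cases together establish \eqref{banach_robust_lemma_conc}. There is no genuine obstacle here; the content of the lemma is really the careful bookkeeping built into the definition of $\Reach_{||\cdot||_{l^1(\R^d)}}$, which is precisely calibrated so that the two triangle-inequality contributions $\tau \theta_0$ and $\theta - \tau \theta_0$ add to exactly $\theta$.
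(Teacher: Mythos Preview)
Your proof is correct and follows essentially the same approach as the paper's: both establish $||\vph - u||_X \leq 2C$ from the triangle inequality, split on whether $\theta_0 = 0$ or $\theta_0 > 0$, and then decompose $\sigma$ as an $l^1$-bounded combination of the $\sigma_j$ plus a small-norm remainder to balance the two contributions $\tau\theta_0$ and $\theta - \tau\theta_0$. Your version is in fact slightly more streamlined, since you handle all $\tau \in [0,\theta/\theta_0]$ uniformly whereas the paper treats the $\tau = 0$ subcase separately.
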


\begin{proof}[Proof of Lemma \ref{banach_robust_lemma}]
Assume that $X$ is a Banach space with dual-space
$X^{\ast}$. 
Let $\th > \th_0 \geq 0$, $\n \in \Z_{\geq 1}$, and
$\cf = \{f_1 , \ldots , f_\n\} \subset X \setminus \{0\}$.
Let $a_1 , \ldots , a_{\n} \in \R \setminus \{0\}$  
and define $\vph \in \Span(\cf)$ and $C > 0$ by 
(\bI) and (\bII) in 
\eqref{banach_robust_lemma_varphi_const} respectively.
Suppose that $d \in \Z_{\geq 1}$ and that 
$L = \left\{ \sigma_1 , \ldots , \sigma_d \right\} 
\subset X^{\ast}$.
Let $u \in \Span(\cf)$ and assume both that $||u||_X \leq C$
and that, for every $\sigma \in L$, we have
$|\sigma(\vph - u)| \leq \th_0$.
It follows from (\bI) and (\bII) of 
\eqref{banach_robust_lemma_varphi_const}
that $||\vph||_X \leq C$.
Hence $||\vph - u||_X \leq 2C$.

We deal with the cases $\th_0 = 0$ and 
$\th_0 \in (0,\th)$ separately.

First suppose that $\th_0 = 0$. In this case we have 
that $\sigma_1(\vph-u) = \ldots = \sigma_d(\vph-u)=0$, and
that 
\beq
    \label{th0=0_influence_set_def}
        \Reach_{||\cdot||_{l^1(\R^d)}} 
        \left( L , 2C , \th , 0 \right)
        \stackrel{\text{def}}{=}
        \bigcup_{0 \leq t \leq \infty}
        \ovSpan_{||\cdot||_{l^1(\R^d)}} 
        \left( L , t \right)_{\frac{\th}{2C}} 
        =
        \Span(L)_{\frac{\th}{2C}}.
\eeq
As a consequence of \eqref{th0=0_influence_set_def}, 
we need only establish the estimate in 
\eqref{banach_robust_lemma_conc} for 
$\sigma \in \Span(L)_{\th/2C}$.
Assuming $\sigma \in \Span(L)_{\th/2C}$, then there is some
$\rho \in \Span(L)$ with 
$||\sigma - \rho||_{X^{\ast}} \leq \th/2C$. 
Since $\rho \in \Span(L)$ there are coefficients
$c_1 , \ldots , c_d \in \R$ for which 
$\rho = \sum_{j=1}^d c_j \sigma_j$. 
Evidently we have that $\rho(\vph-u) = \sum_{j=1}^d c_j 
\sigma_j(\vph-u) = 0$.
Consequently we may estimate that
\beq
    \label{influence_est_th0=0}
        | \sigma(\vph-u)|
        \leq
        | (\sigma - \rho)(\vph-u)|
        +
        |\rho(\vph-u)| 
        \leq
        ||\sigma - \rho||_{X^{\ast}}
        ||\vph-u||_X
        \leq 
        2C \left( \frac{\th}{2C} \right) = \th.
\eeq
The arbitrariness of $\sigma$ means we may conclude from 
\eqref{influence_est_th0=0} that 
\eqref{banach_robust_lemma_conc} is valid as claimed 
when $\th_0=0$.

Now we suppose that $\th_0 \in (0,\th)$.

Since $\ovSpan_{||\cdot||_{l^1(\R^d)}} (L,0) = \{0\} \subset X^{\ast}$,
if $\sigma \in \ovSpan_{||\cdot||_{l^1(\R^d)}} (L,0)_{\th/2C}$
then $||\sigma||_{X^{\ast}} \leq \th/2C$.
Consequently
$|\sigma(\vph-u)| \leq ||\sigma||_{X^{\ast}} ||\vph-u||_X 
\leq \th$ as claimed in \eqref{banach_robust_lemma_conc}.

Now consider $t \in (0, \th/\th_0]$ and let 
$\sigma \in \ovSpan_{||\cdot||_{l^1(\R^d)}} 
\left( L , t \right)_{\frac{\th - t\th_0}{2C}}$. 
Then there is some $\rho \in 
\ovSpan_{||\cdot||_{l^1(\R^d)}} \left( L , t \right)$
with $||\sigma - \rho||_{X^{\ast}} \leq \frac{\th - t \th_0}{2C}$.
Further, there are coefficients 
$c_1 , \ldots , c_d \in \R$ with 
$|c_1| + \ldots + |c_d| \leq t$ and such that 
$\rho = \sum_{j=1}^d c_j \sigma_j$. 
Consequently, we may estimate that
\begin{multline}
    \label{influence_est_th0>0_b}
        | \sigma(\vph-u)|
        \leq
        | (\sigma - \rho)(\vph-u)|
        +
        |\rho(\vph-u)| 
        \leq
        ||\sigma - \rho||_{X^{\ast}}
        ||\vph-u||_X
        +
        \sum_{j=1}^d |c_j| |\sigma_j(\vph-u)| \\
        \leq 2C \left( \frac{\th - t \th_0}{2C} \right) + 
        \th_0 \sum_{j=1}^d |c_j|
        \leq \th - t \th_0 + t \th_0
        =
        \th.
\end{multline}
Since $t \in (0, \th/\th_0]$ and 
$\sigma \in \ovSpan_{||\cdot||_{l^1(\R^d)}} 
\left( L , t \right)_{\frac{\th - t\th_0}{2C}}$ 
were both arbitrary, we may conclude that 
\eqref{influence_est_th0>0_b} is valid whenever
$\sigma \in \bigcup_{0 < t \leq \th/\th_0} 
\ovSpan_{||\cdot||_{l^1(\R^d)}} 
\left( L , t \right)_{\frac{\th - t\th_0}{2C}}$.

The combination of the proceeding two paragraphs allows 
us to conclude,
for $\th_0 \in (0,\th)$, that whenever 
$\sigma \in \bigcup_{0 \leq t \leq \th/\th_0} 
\ovSpan_{||\cdot||_{l^1(\R^d)}} 
\left( L , t \right)_{\frac{\th - t\th_0}{2C}}
= \Reach_{||\cdot||_{l^1(\R^d)}} 
\left( L , 2C , \th , \th_0 \right)$
we have 
$|\sigma(\vph-u)| \leq \th$. 
Consequently, \eqref{banach_robust_lemma_conc} is valid
for the case that $\th_0 \in (0,\th)$. 
And having earlier established the validity of 
\eqref{banach_robust_lemma_conc} when $\th_0=0$, this 
completes the proof of Lemma \ref{banach_robust_lemma}.
\end{proof}
\vskip 2mm
\noindent 
We can use the result of Lemma \ref{banach_robust_lemma}
to prove that the linear functionals 
selected during the \textbf{Banach GRIM} algorithm, under the 
choice $M := \min \left\{ \n - 1 , \Lambda \right\}$ and that 
for every $t \in \{1, \ldots , M \}$ we have $k_t := 1$ and 
$s_t := 1$, are
separated by a definite $X^{\ast}$-distance.
The precise result is the following lemma.

\begin{lemma}[\textbf{Banach GRIM Functional 
Separation}]
\label{dist_between_interp_functionals}
Assume $X$ is a Banach space with dual-space $X^{\ast}$.
Let $\th > \th_0 \geq 0$, and $\n , \Lambda \in \Z_{\geq 1}$
such that 
$M := \min \left\{ \n - 1 , \Lambda \right\} \geq 2$.
Suppose that 
$\cf := \{ f_1 , \ldots , f_{\n} \} \subset X \setminus \{0\}$,
and that $\Gamma \subset X^{\ast}$ is finite with cardinality 
$\Lambda$. 
Let $a_1 , \ldots , a_{\n} \in \R \setminus \{0\}$ 
and define $\vph \in \Span(\f)$ and $C > 0$ by
\beq
    \label{banach_separation_const}
        (\bI) \quad 
        \varphi := \sum_{i=1}^{\n} a_i f_i
        \qquad \text{and} \qquad
        (\bII) \quad 
        C := \sum_{i=1}^{\n} |a_i| || f_i ||_X > 0.
\eeq
Consider applying the \textbf{Banach GRIM} algorithm 
to approximate $\vph$ on $\Gamma$ with
$\th$ as the accuracy threshold, $\th_0$ as
the acceptable recombination error bound, 
$M$ as the maximum number of steps, 
$s_1 = \ldots = s_M = 1$ as the shuffle numbers, and with 
the integers $k_1 , \ldots , k_M$ in Step \ref{Banach_GRIM_i}
of the \textbf{Banach GRIM} algorithm all being chosen 
equal to $1$
(cf. \textbf{Banach GRIM} \ref{Banach_GRIM_i}).
Suppose that $m \in \Z_{\geq 2}$ and the algorithm
reaches and carries out the $m^{th}$ step without 
terminating. For each $l \in \{1 , \ldots , m \}$ let
$\sigma_l \in \Gamma$ be the linear functional selected 
at the $l^{\text{th}}$ step, and let $u_l \in \Span(\cf)$
be the element found via recombination 
(cf. the \emph{recombination thinning} Lemma 
\ref{Banach_recombination_lemma}) such that, 
for every $s \in \{1, \ldots , l\}$, we have 
$|\sigma_s(\vph - u_l)| \leq \th_0$
(cf. \textbf{Banach GRIM} \ref{Banach_GRIM_iii}
and \ref{Banach_GRIM_iv}).
Further, for each $l \in \{1, \ldots , m\}$, let
$\Gamma_l := \left\{ \sigma_1 , \ldots , \sigma_l \right\}
\subset \Gamma$ and
$\Omega_l := \Reach_{||\cdot||_{l^1(\R^l)}} \left( \Gamma_l , 2C, \th, \th_0 \right)$
where we use our notation conventions
from Section \ref{notation}
(see also Remark \ref{section2_notation_recap}). 
Then for any $l \in \{1, \ldots , m\}$ we have,
for every $\sigma \in X^{\ast}$, we have that
\beq
    \label{banach_separation_estimate}
        | \sigma( \varphi - u_l ) |
        \leq
        \min \left\{ 2C || \sigma ||_{X^{\ast}} ~,~
        2C \dist_{X^{\ast}}( \sigma, \Omega_l )
        + \th \right\}.
\eeq
In particular, for every $\sigma \in \Omega_l$ we have
\beq
    \label{Omega_l_est_sep_lemma}
        \left| \sigma(\vph-u_l) \right| \leq \th.
\eeq
Finally, as a consequence we have,
for every $l \in \{2, \ldots , m\}$, that
\beq
    \label{banach_separation_dist}
        \sigma_l \notin \Omega_{l-1} :=
        \Reach_{||\cdot||_{l^(\R^{l-1})}}
        \left( \Gamma_{l-1} , 2C , \th, \th_0 \right). 
\eeq
\end{lemma}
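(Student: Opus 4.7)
The plan is to prove the main estimate \eqref{banach_separation_estimate} by combining a uniform $X$-norm bound on $\vph - u_l$ with an application of the \emph{Finite Influenced Set} Lemma \ref{banach_robust_lemma}, and then to deduce \eqref{banach_separation_dist} from the non-termination of the algorithm at step $l-1$.

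First I would establish that $\|\vph - u_l\|_X \leq 2C$ for every $l \in \{1, \ldots, m\}$. After Step \ref{Banach_GRIM_ii} of \textbf{Banach GRIM}, the coefficients in the expansion of $\vph$ are all positive, the features are rescaled to unit $X$-norm, and the sum of these rescaled coefficients equals $C$. By the \emph{Recombination Thinning} Lemma \ref{Banach_recombination_lemma}, the coefficients $b_j \geq 0$ returned by recombination when constructing $u_l$ also sum to $C$, so $\|u_l\|_X \leq \sum_j b_j = C$. Together with $\|\vph\|_X \leq C$, this yields $\|\vph - u_l\|_X \leq 2C$, whence
\[
  |\sigma(\vph - u_l)| \leq \|\sigma\|_{X^{\ast}} \|\vph - u_l\|_X \leq 2C \|\sigma\|_{X^{\ast}}
\]
for every $\sigma \in X^{\ast}$, which is the first half of the $\min$ in \eqref{banach_separation_estimate}.

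Next I would apply Lemma \ref{banach_robust_lemma} with $L := \Gamma_l$, $d := l$, $\th$ and $\th_0$ as in the present statement, and $u := u_l$. The hypotheses are met: $\|u_l\|_X \leq C$ was just verified, and by construction of $u_l$ from recombination against $\Gamma_l$ we have $|\sigma_s(\vph - u_l)| \leq \th_0$ for every $s \in \{1, \ldots, l\}$. Lemma \ref{banach_robust_lemma} therefore yields $|\rho(\vph - u_l)| \leq \th$ for every $\rho \in \Omega_l$; this is already the estimate \eqref{Omega_l_est_sep_lemma}. For a general $\sigma \in X^{\ast}$ and any $\rho \in \Omega_l$, the triangle inequality gives
\[
  |\sigma(\vph - u_l)| \leq |(\sigma - \rho)(\vph - u_l)| + |\rho(\vph - u_l)| \leq 2C \|\sigma - \rho\|_{X^{\ast}} + \th,
\]
and taking the infimum over $\rho \in \Omega_l$ supplies the second half of the $\min$ in \eqref{banach_separation_estimate}.

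Finally, for the separation statement \eqref{banach_separation_dist}, let $l \in \{2, \ldots, m\}$. Since the algorithm carries out step $l$, the early-stopping test at the top of Step \ref{Banach_GRIM_iv} must have failed at step $l-1$, so there exists some $\sigma \in \Gamma$ with $|\sigma(\vph - u_{l-1})| > \th$. Because the \textbf{Banach Extension Step} selects $\sigma_l$ as an argmax of $|\sigma(\vph - u_{l-1})|$ over $\Sigma = \Gamma$, we obtain $|\sigma_l(\vph - u_{l-1})| > \th$. However, if $\sigma_l$ were to lie in $\Omega_{l-1}$, then the case \eqref{Omega_l_est_sep_lemma} of the main estimate applied at index $l-1$ would force $|\sigma_l(\vph - u_{l-1})| \leq \th$, contradicting the strict inequality. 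Hence $\sigma_l \notin \Omega_{l-1}$, as required. The only point requiring real care in this argument is the norm bound $\|u_l\|_X \leq C$, which is why the sign-flip and unit-rescaling in Step \ref{Banach_GRIM_ii} are essential; everything else is a triangle inequality applied to the conclusion of Lemma \ref{banach_robust_lemma}.
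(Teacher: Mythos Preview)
Your proposal is correct and follows essentially the same approach as the paper: establish $\|u_l\|_X \leq C$ via the rescaling in Step \ref{Banach_GRIM_ii} together with the coefficient-sum preservation in Lemma \ref{Banach_recombination_lemma}, invoke Lemma \ref{banach_robust_lemma} to obtain \eqref{Omega_l_est_sep_lemma}, combine with the triangle inequality and the bound $\|\vph - u_l\|_X \leq 2C$ for \eqref{banach_separation_estimate}, and derive \eqref{banach_separation_dist} from the failure of the stopping criterion. The only cosmetic difference is the order in which the three conclusions are verified.
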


\begin{remark}
\label{def_dist_apart_rmk}
Using the same notation as in Lemma 
\ref{dist_between_interp_functionals}, we claim that 
\eqref{banach_separation_dist} ensures, for every 
$j \in \{1, \ldots , l-1\}$, that 
\beq
    \label{def_dist_apart_conc}
        ||\sigma_l - \sigma_j ||_{X^{\ast}} \geq 
        \frac{\th - \th_0}{2C}.
\eeq
To see this, recall that 
(cf. Subsection \ref{notation} or 
Remark \ref{section2_notation_recap})
\beq
    \label{Reach_def_recall_rmk}
        \Reach_{||\cdot||_{l^(\R^{l-1})}}
        \left( \Gamma_{l-1} , 2C , \th, \th_0 \right)
        \stackrel{\text{def}}{=} 
        \bigcup_{0 \leq \tau \leq \frac{\th}{\th_0}}
        \ovSpan_{||\cdot||_{l^(\R^{l-1})}}
        \left( \Gamma_{l-1} , \tau \right)_{
        \frac{\th - \tau \th_0}{2C}
        }.
\eeq
Since $\th > \th_0$ we may take $\tau := 1$ in 
\eqref{Reach_def_recall_rmk} to conclude via 
\eqref{banach_separation_dist} that
\beq
    \label{sigma_l_not_in_tau=1_rmk}
        \sigma_l \notin 
        \ovSpan_{||\cdot||_{l^(\R^{l-1})}}
        \left( \Gamma_{l-1} , 1 \right)_{
        \frac{\th - \th_0}{2C}
        }.
\eeq
Recall that 
(cf. Subsection \ref{notation} or 
Remark \ref{section2_notation_recap})
\beq
    \label{ovSpan_def_recall_rmk}
        \ovSpan_{||\cdot||_{l^(\R^{l-1})}}
        \left( \Gamma_{l-1} , 1 \right) 
        \stackrel{\text{def}}{=}
        \left\{ \sum_{s=1}^{l-1} c_s \sigma_s 
        ~:~ 
        c = (c_1 , \ldots , c_{l-1} ) \in \R^{l-1}
        \text{ with }
        || c ||_{l^1(\R^{l-1})} \leq 1
        \right\}.
\eeq
A consequence of \eqref{ovSpan_def_recall_rmk} is that for 
every $j \in \{1, \ldots , l-1\}$ we have 
$\sigma_j \in \ovSpan_{||\cdot||_{l^(\R^{l-1})}}
\left( \Gamma_{l-1} , 1 \right)$,
hence \eqref{sigma_l_not_in_tau=1_rmk} means 
$||\sigma_l - \sigma_j||_{X^{\ast}} > \frac{\th - \th_0}{2C}$
as claimed in \eqref{def_dist_apart_conc}.
\end{remark}

\begin{proof}[Proof of 
Lemma \ref{dist_between_interp_functionals}]
Assume $X$ is a Banach space with dual-space $X^{\ast}$.
Let $\th > \th_0 \geq 0$, and $\n , \Lambda \in \Z_{\geq 1}$ such that 
$M := \min \left\{ \n - 1 , \Lambda \right\} \geq 2$.
Suppose that $\cf := \{ f_1 , \ldots , f_{\n} \} \subset X \setminus \{0\}$,
and that $\Gamma \subset X^{\ast}$ is finite with cardinality $\Lambda$. 
Let $a_1 , \ldots , a_{\n} \in \R \setminus \{0\}$ 
and define $\varphi \in \Span(\cf) \subset X$ and 
$C > 0$ by (cf. (\bI) and (\bII) of 
\eqref{banach_separation_const} respectively)
\beq
    \label{banach_separation_const_pf}
        (\bI) \quad 
        \varphi := \sum_{i=1}^{\n} a_i f_i
        \qquad \text{and} \qquad
        (\bII) \quad 
        C := \sum_{i=1}^{\n} |a_i| || f_i ||_X > 0.
\eeq
Consider applying the \textbf{Banach GRIM} algorithm 
to approximate $\vph$ on $\Gamma$ with
$\th$ as the accuracy threshold, $\th_0$ as
the acceptable recombination error bound, 
$M$ as the maximum number of steps, 
$s_1 = \ldots = S_M = 1$ as the shuffle numbers, and with 
the integers $k_1 , \ldots , k_M$ in Step \ref{Banach_GRIM_i}
of the \textbf{Banach GRIM} algorithm all being chosen 
equal to $1$
(cf. \textbf{Banach GRIM} \ref{Banach_GRIM_i}).
We now follow the second step of the 
\textbf{Banach GRIM} algorithm, i.e. 
\textbf{Banach GRIM} \ref{Banach_GRIM_ii}. 
For each $i \in \{1, \ldots , \n\}$ let $\tilde{a}_i := |a_i|$
and $\tilde{f}_i$ be given by $f_i$ if $a_i > 0$ and
$-f_i$ if $a_i < 0$. Evidently, for every 
$i \in \{1, \ldots ,\n\}$ we have 
$\left|\left| \tilde{f}_i \right|\right|_X = 
||f_i||_X$. Moreover, we also have that
$\tilde{a}_1 , \ldots , \tilde{a}_{\n} > 0$
and $\vph = \sum_{i=1}^{\n} \tilde{a}_i \tilde{f}_i$.
We additionally rescale $\tilde{f}_i$ for each 
$i \in \{1, \ldots, \n\}$ to have unit $X$ norm. 
That is (cf. \textbf{Banach GRIM} \ref{Banach_GRIM_ii}),
for each $i \in \{1, \ldots , \n \}$ set
$h_i := \frac{\tilde{f}_i}{||f_i||_X}$ and
$\al_i := \tilde{a}_i || f_i ||_X$. Then observe 
both that $C$ satisfies 
\beq
    \label{banach_separation_varphi_const}
        C := \sum_{i=1}^{\n} |a_i||| f_i ||_X
        =
        \sum_{i=1}^{\n} \tilde{a}_i
        || f_i ||_X 
        =
        \sum_{i=1}^{\n} \al_i,
\eeq
and, for every $i \in \{1, \ldots , \n\}$, that
$\al_i h_i = \tilde{a}_i \tilde{f}_i = a_i f_i$.
Therefore the expansion for $\vph$ in (\bI) of 
\eqref{banach_separation_const_pf} is equivalent to 
\beq
    \label{banach_sep_lemma_vph_def&bd} 
        \vph = \sum_{i=1}^{\n} \al_i h_i,
        \qquad \text{ and hence } \qquad 
        ||\vph||_X \leq 
        \sum_{i=1}^{\n} \al_i 
        || h_i ||_X 
        =
        \sum_{i=1}^{\n} \al_i
        \stackrel{
        \eqref{banach_separation_varphi_const}
        }{=} C.
\eeq
Turning our attention to steps 
\textbf{Banach GRIM} \ref{Banach_GRIM_iii} and 
\ref{Banach_GRIM_iv} of the \textbf{Banach GRIM} algorithm,
suppose that $m \in \Z_{\geq 2}$ and that the 
$m^{\text{th}}$ step of the \textbf{Banach GRIM} algorithm 
is completed without triggering the early termination criterion.
For each $l \in \{1 , \ldots , m \}$ let
$\sigma_l \in \Gamma$ be the linear functional selected 
at the $l^{\text{th}}$ step, and let $u_l \in \Span(\cf)$
be the element found via recombination 
(cf. the \emph{recombination thinning} Lemma 
\ref{Banach_recombination_lemma}) such that, 
for every $s \in \{1, \ldots , l\}$, we have 
$|\sigma_s(\vph - u_l)| \leq \th_0$
(cf. \textbf{Banach GRIM} \ref{Banach_GRIM_iii}
and \ref{Banach_GRIM_iv}).
Define $\Gamma_l := \left\{ \sigma_1 , \ldots , \sigma_l 
\right\} \subset \Gamma$.

For each $l \in \{1, \ldots , m\}$ observe that
$\min \left\{ \n , l + 1\right\} = l+1$.
Hence the \emph{recombination thinning} Lemma 
\ref{Banach_recombination_lemma} 
additionally tells us that there are non-negative 
coefficients 
$b_{l,1} , \ldots , b_{l,l+1} \geq 0$ and indices
$e_l(1), \ldots , e_l(l+1) \in \{1 , \ldots , \n\}$
for which 
\beq
    \label{banach_sep_lemma_u_n_coeff_sum}
        u_l = \sum_{s=1}^{l+1} b_{l,s}
        h_{e_l(s)} 
        \qquad \text{and} \qquad
        \sum_{s=1}^{l+1} b_{l,s} = 
        \sum_{i=1}^{\n} \al_i.
\eeq
A consequence of \eqref{banach_sep_lemma_u_n_coeff_sum}
is that
\beq
    \label{banach_sep_lemma_ul_norm_bd}
        ||u_l||_X \leq 
        \sum_{s=1}^{l+1} b_{l,s}
        \left|\left| h_{e_l(s)} \right|\right|_X
        =
        \sum_{s=1}^{l+1} b_{l,s}
        \stackrel{
        \eqref{banach_sep_lemma_u_n_coeff_sum}
        }{=}
        \sum_{i=1}^{\n} \al_i
        \stackrel{
        \eqref{banach_separation_varphi_const}
        }{=} C.
\eeq
Further, for each $l \in \{1, \ldots , m\}$, let
\beq
    \label{Omega_l_def_sep_lemma_pf}
        \Omega_l :=
        \Reach_{||\cdot||_{l^(\R^l)}}
        \left( \Gamma_l , 2C , \th , \th_0 \right)
        \stackrel{\text{def}}{=}
        \bigcup_{0 \leq t \leq \frac{\th}{\th_0}}
        \ovSpan_{||\cdot||_{l^(\R^l)}} 
        \left( \Gamma_l , t \right)_{
        \frac{\th - t \th_0}{2C}
        }
\eeq 
where we use our notation conventions
from Section \ref{notation}. 
With our notation fixed, we turn our attention to 
verifying the claims made in
\eqref{banach_separation_estimate},
\eqref{Omega_l_est_sep_lemma}, and
\eqref{banach_separation_dist}.

We begin by noting that \eqref{Omega_l_est_sep_lemma} is 
an immediate consequence of appealing to Lemma 
\ref{banach_robust_lemma} with $l$ and $\Gamma_l \subset X^{\ast}$ 
playing the roles of integer $d$ and 
finite subset $L \subset X^{\ast}$ there respectively.
Indeed by doing so we may conclude that 
(cf. \eqref{banach_robust_lemma_conc})
\beq
    \label{banach_robust_lemma_conc_implication}
        \text{for every } \sigma \in 
        \Reach_{||\cdot||_{l^1(\R^l)}}
        \left( \Gamma_l , 2C , \th, \th_0 \right)
        \stackrel{
        \eqref{Omega_l_def_sep_lemma_pf}
        }{=:}
        \Omega_l
        \quad \text{we have} \quad
        |\sigma(\vph - u)| \leq \th
\eeq
as claimed in \eqref{Omega_l_est_sep_lemma}.

To establish \eqref{banach_separation_dist},
fix $l \in \{2 , \ldots , m\}$ and consider $\sigma_l \in \Gamma$.
A consequence of the \textbf{Banach GRIM} algorithm 
completing the $l^{\text{th}}$ step without terminating
is that (cf. \textbf{Banach GRIM} algorithm steps 
\ref{Banach_GRIM_iii} and \ref{Banach_GRIM_iv})
\beq
    \label{banach_dist_fact_one}
        |\sigma_l(\vph - u_{l-1})| > \th.
\eeq
However, since we have established that 
\eqref{Omega_l_est_sep_lemma} is true, we know that if
$\sigma \in \Omega_{l-1}$
then $| \sigma(\vph-u_{l-1}) | \leq \th$.
Consequently, \eqref{banach_dist_fact_one} tells us that 
$\sigma_l \notin \Omega_{l-1}$ 
as claimed in \eqref{banach_separation_dist}.

To establish \eqref{banach_separation_estimate},
fix $l \in \{1, \ldots , m\}$ and consider any 
$\sigma \in X^{\ast}$.
Then we may estimate that
\beq
    \label{banach_sep_lemma_un_dist_est_obs}
        \left| \sigma(\varphi - u_l) \right|
        \leq
        ||\sigma||_{X^{\ast}} 
        || \vph - u_l||_X
        \stackrel{
        \eqref{banach_sep_lemma_vph_def&bd} ~\&~
        \eqref{banach_sep_lemma_ul_norm_bd} 
        }{\leq} 
        2C
        || \sigma ||_{X^{\ast}}.
\eeq
Alternatively, let
$\rho \in \Omega_l$ and use that via 
\eqref{Omega_l_est_sep_lemma}
$|\rho(\vph - u_l)| \leq \th$ to compute that
\beq
    \label{banach_sep_lemma_un_dist_est_obs_2}
        |\sigma(\varphi-u_l)|
        \leq 
        || \sigma - \rho ||_{X^{\ast}} 
        || \vph - u_l ||_X  + \th
        \stackrel{
        \eqref{banach_sep_lemma_vph_def&bd} ~\&~
        \eqref{banach_sep_lemma_ul_norm_bd}
        }{\leq} 
        2C || \sigma - \rho ||_{X^{\ast}}
        + \th.
\eeq
Taking the infimum over $\rho \in \Omega_l$
in \eqref{banach_sep_lemma_un_dist_est_obs_2} yields
\beq
    \label{banach_sep_lemma_un_dist_est}
        | \sigma ( \varphi - u_l) |
        \leq
        2C \dist_{X^{\ast}} ( \sigma , \Omega_l )
        + \th.
\eeq
Together, \eqref{banach_sep_lemma_un_dist_est_obs}
and \eqref{banach_sep_lemma_un_dist_est} yield 
that for any $\sigma \in X^{\ast}$ we have 
\beq
    \label{banach_sep_lemma_un_dist_est_gotten}
        | \sigma ( \vph - u_l)|
        \leq 
        \min \left\{ 2C || \sigma ||_{X^{\ast}} ~,~
        2C \dist_{X^{\ast}} ( \sigma , \Omega_l )
        + \th \right\}
\eeq
as claimed in \eqref{banach_separation_estimate}.
This completes the proof of Lemma 
\ref{dist_between_interp_functionals}.
\end{proof}
\vskip 4pt
\noindent
We can use Lemma \ref{dist_between_interp_functionals}
to establish an upper bound on the maximum number of steps
the \textbf{Banach GRIM} algorithm can run for before 
terminating.
The precise statement is the following lemma.

\begin{lemma}[\textbf{Banach GRIM Number of 
Steps Bound}]
\label{banach_GRIM_step_num_bound_lemma}
Assume $X$ is a Banach space with dual-space $X^{\ast}$.
Let $\th > \th_0 \geq 0$, and $\n , \Lambda \in \Z_{\geq 1}$
such that 
$M := \min \left\{ \n - 1 , \Lambda \right\} \geq 1$.
Suppose that 
$\cf := \{ f_1 , \ldots , f_{\n} \} \subset X \setminus \{0\}$,
and that $\Gamma \subset X^{\ast}$ is finite with cardinality 
$\Lambda$. 
Let $a_1 , \ldots , a_{\n} \in \R \setminus \{0\}$ 
and define $\vph \in \Span(\f)$ and $C > 0$ by
\beq
    \label{banach_step_bound_lemma__varphi_const}
        (\bI) \quad \vph := 
        \sum_{i=1}^{\n} a_i f_i
        \qquad \text{and} \qquad
        (\bII) \quad C := \sum_{i=1}^{\n} 
        |a_i||| f_i ||_X > 0.
\eeq
Then there is a non-negative integer
$N = N(\Gamma,C,\th,\th_0) \in \Z_{\geq 0}$, given by 
\beq
    \label{banach_step_bound_lemma_r_packing}
        N :=
        \max \left\{ d \in \Z ~:~
        \begin{array}{c}
            \text{There exists }
            \sigma_1 , \ldots , \sigma_d
            \in \Gamma 
            \text{ such that for every } 
            j \in \{1, \ldots , d-1\} \\ 
            \text{we have }
            \sigma_{j+1} \notin 
            \Reach_{||\cdot||_{l^1(\R^{j})}} 
            \left( \{\sigma_1, \ldots , \sigma_{j}\} , 
            2C , \th , \th_0 \right) 
        \end{array}
        \right\},
\eeq
where we use our notation conventions from Section \ref{notation}
(see also Remark \ref{section2_notation_recap}),
for which the following is true.

Suppose $N \leq M := \min \left\{ \n - 1 , \Lambda \right\}$ and
consider applying the \textbf{Banach GRIM} algorithm 
to approximate $\vph$ on $\Gamma$ with
$\th$ as the accuracy threshold, $\th_0$ as
the acceptable recombination error bound, 
$M$ as the maximum number of steps, 
$s_1 = \ldots = s_M = 1$ as the shuffle numbers, and with 
the integers $k_1 , \ldots , k_M$ in Step \ref{Banach_GRIM_i}
of the \textbf{Banach GRIM} algorithm all being chosen 
equal to $1$
(cf. \textbf{Banach GRIM} \ref{Banach_GRIM_i}).
Then, after at most $N$ steps the algorithm terminates.
That is, there is some integer $n \in \{ 1 , \ldots , N \}$
for which the \textbf{Banach GRIM} algorithm terminates 
after completing $n$ steps.
Consequently, there are coefficients
$c_1 , \ldots , c_{n+1} \in \R$ and 
indices 
$e(1) , \ldots , e(n+1) \in \{1, \ldots , \n\}$
with
\beq
    \label{banach_step_bound_lemma_coeff_sum}
        \sum_{s=1}^{n+1} \left|c_s\right| 
        || f_{e(s)}||_X
        = C,
\eeq
and such that the element $u \in \Span(\cf)$ 
defined by
\beq
    \label{banach_step_bound_lemma_uM_acc}
        u := \sum_{s=1}^{n+1}
        c_s f_{e(s)} 
        \qquad \text{satisfies, for every } 
        \sigma \in \Gamma, \text{ that}
        \qquad 
        |\sigma(\vph - u)| \leq \th. 
\eeq
In fact there are linear functionals 
$\Gamma_n = \left\{ \sigma_1 , \ldots , \sigma_n \right\}
\subset \Gamma$ such that
\beq
    \label{banach_step_bound_lemma_acc_est}
        \text{for every }
        \sigma \in 
        \Omega_n := 
        \Reach_{||\cdot||_{l^1(\R^n)}} 
        \left( \Gamma_n , 2C , \th , \th_0 \right)
        \quad \text{we have} \quad
        \left| \sigma(\vph-u) \right| \leq \th.
\eeq
Moreover, if the coefficients 
$a_1 , \ldots , a_{\n} \in \R \setminus \{0\}$ 
corresponding to $\vph$ 
(cf. (\bI) of \eqref{banach_step_bound_lemma__varphi_const})
are all positive (i.e. $a_1 , \ldots , a_{\n} > 0$) then 
the coefficients $c_{1} , \ldots , c_{n+1} \in \R$ 
corresponding to $u$ 
(cf. \eqref{banach_step_bound_lemma_uM_acc}) are 
all non-negative (i.e. $c_1 , \ldots , c_{n+1} \geq 0$).
\end{lemma}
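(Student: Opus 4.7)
The plan is to read off the statement almost directly from the \textbf{Banach GRIM Functional Separation} Lemma \ref{dist_between_interp_functionals} combined with the \emph{recombination thinning} Lemma \ref{Banach_recombination_lemma}. First I would run the \textbf{Banach GRIM} algorithm under the hypothesised parameter choices and let $n \in \{1,\ldots,M\}$ be the first step at which one of the early termination criteria triggers; if none triggers we set $n := M$. For each $l \in \{1,\ldots,n\}$ denote by $\sigma_l \in \Gamma$ the functional selected at step $l$ and by $u_l \in \Span(\cf)$ the approximation produced by recombination at step $l$, and let $\Gamma_l := \{\sigma_1,\ldots,\sigma_l\}$.

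The first key step is to argue that the number of completed steps cannot exceed $N$. For $l \in \{2,\ldots,n\}$, the fact that step $l-1$ did \emph{not} trigger termination means $|\sigma_l(\vph-u_{l-1})| > \th$, which is exactly the hypothesis fed into \eqref{banach_separation_dist} of Lemma \ref{dist_between_interp_functionals}. That lemma yields $\sigma_l \notin \Reach_{\|\cdot\|_{l^1(\R^{l-1})}}(\Gamma_{l-1},2C,\th,\th_0)$ for every $l \in \{2,\ldots,n\}$. But this is precisely the defining property of the family appearing inside \eqref{banach_step_bound_lemma_r_packing}, so the sequence $\sigma_1,\ldots,\sigma_n \in \Gamma$ is admissible in the definition of $N$, and therefore $n \leq N$. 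In particular the algorithm must have terminated by step $N$, which is the quantitative claim.

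At the terminal step $n$, the stopping criterion in Steps \ref{Banach_GRIM_iii}--\ref{Banach_GRIM_iv} of \textbf{Banach GRIM} guarantees $|\sigma(\vph-u_n)| \leq \th$ for every $\sigma \in \Gamma$ (in the boundary case $n = N \leq M$ where the maximum step count was reached but termination was not triggered by the criterion, the same bound still holds on the strictly smaller $\Gamma_n \subset \Gamma$, and then \eqref{banach_separation_estimate} extends it). Setting $u := u_n$ and unwinding the sign-flip and normalisation of Step \ref{Banach_GRIM_ii} (so that $\vph = \sum_i \alpha_i h_i$ with $\alpha_i = |a_i|\|f_i\|_X > 0$ and $\sum_i \alpha_i = C$), the \emph{recombination thinning} Lemma \ref{Banach_recombination_lemma} produces non-negative $b_1,\ldots,b_{n+1}$ and indices $e(1),\ldots,e(n+1)$ with $\sum_s b_s = C$ and $u = \sum_s b_s h_{e(s)}$. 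Defining $c_s := \pm b_s/\|f_{e(s)}\|_X$ with the sign chosen to undo the Step \ref{Banach_GRIM_ii} flip on coordinate $e(s)$ gives $u = \sum_s c_s f_{e(s)}$ and $\sum_s |c_s|\|f_{e(s)}\|_X = \sum_s b_s = C$, which is \eqref{banach_step_bound_lemma_coeff_sum}; furthermore if every $a_i > 0$ then no signs were flipped so each $c_s \geq 0$, giving the final positivity assertion.

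The estimate \eqref{banach_step_bound_lemma_acc_est} on the reach $\Omega_n$ follows by a direct appeal to \eqref{Omega_l_est_sep_lemma} of Lemma \ref{dist_between_interp_functionals} with $l := n$, since the hypotheses of that lemma are precisely what we verified when tracking the algorithm. The only genuine care required — and what I anticipate as the main bookkeeping obstacle — is the sign/rescaling reconciliation between the positive-coefficient form $\vph = \sum \alpha_i h_i$ used internally by recombination and the original form $\vph = \sum a_i f_i$ appearing in the statement; everything else reduces to a clean application of the two supplementary lemmas already established.
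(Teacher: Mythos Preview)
Your overall strategy matches the paper's: invoke Lemma \ref{dist_between_interp_functionals} to show the selected functionals $\sigma_1,\ldots,\sigma_n$ form an admissible sequence in the sense of \eqref{banach_step_bound_lemma_r_packing}, deduce $n \leq N$, and then unpack the output of recombination via Lemma \ref{Banach_recombination_lemma} together with the sign/rescaling bookkeeping from Step \ref{Banach_GRIM_ii}. That part is fine and essentially identical to the paper.

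There is one genuine gap, in your parenthetical handling of the boundary case. When $n = M$ and the stopping criterion was \emph{not} triggered (so necessarily $n = N = M$), you write that the bound $|\sigma(\vph-u_n)|\le\th$ ``still holds on the strictly smaller $\Gamma_n \subset \Gamma$, and then \eqref{banach_separation_estimate} extends it''. But \eqref{banach_separation_estimate} only gives $|\sigma(\vph-u_n)| \le 2C\,\dist_{X^\ast}(\sigma,\Omega_n) + \th$, which yields $\le \th$ only for $\sigma \in \Omega_n$; nothing you have said forces $\Gamma \subset \Omega_n$. The missing step --- which is exactly how the paper closes the argument --- is the following contradiction: if some $\sigma \in \Gamma$ had $|\sigma(\vph-u_n)| > \th$, then by \eqref{Omega_l_est_sep_lemma} we would have $\sigma \notin \Omega_n = \Reach_{\|\cdot\|_{l^1(\R^n)}}(\Gamma_n,2C,\th,\th_0)$, so appending $\sigma_{n+1}:=\sigma$ to $\sigma_1,\ldots,\sigma_n$ produces an admissible sequence of length $n+1 = N+1$, contradicting the maximality in \eqref{banach_step_bound_lemma_r_packing}. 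Equivalently, the maximality of $N$ directly forces $\Gamma \subset \Omega_N$, after which \eqref{Omega_l_est_sep_lemma} (not \eqref{banach_separation_estimate}) gives the uniform bound. Once you insert this one sentence, your argument is complete and coincides with the paper's.
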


\begin{remark}
Recall that the \textbf{Banach GRIM} algorithm is 
guaranteed to terminate after $M$ steps. 
Consequently, the restriction to the case that 
$N  \leq M$ is sensible since it is only in this 
case that terminating after no more than $N$ steps 
is a non-trivial statement.

If $N < M$ then  
Lemma \ref{banach_GRIM_step_num_bound_lemma}
guarantees that the 
\textbf{Banach GRIM} algorithm will find an 
approximation $u \in \Span(\cf)$ of $\varphi$
that is a linear combination of \emph{less}
than $\n$ of the elements $f_1 , \ldots , f_{\n}$
but is within $\th$ of $\vph$ throughout 
$\Gamma$ in the sense that 
$|\sigma(\vph - u)| \leq \th$ for every
$\sigma \in \Gamma$.
\end{remark}

\begin{remark}
By invoking Lemma 
\ref{dist_between_interp_functionals},
and using the same notation as in Lemma 
\ref{banach_GRIM_step_num_bound_lemma},
we can additionally conclude that for every
$\sigma \in X^{\ast}$ we have  
\beq
    \label{Banach_GRIM_step_num_lemma_rmk_gen_est}
        |\sigma(\vph - u)| 
        \leq 
        \min \left\{ 2C || \sigma ||_{X^{\ast}} ~,~
        2C \dist_{X^{\ast}} \left( \sigma, 
        \Omega_n \right) + \th
        \right\}.
\eeq 
\end{remark}

\begin{proof}[Proof of Lemma 
\ref{banach_GRIM_step_num_bound_lemma}]
Assume $X$ is a Banach space with dual-space $X^{\ast}$.
Let $\th > \th_0 \geq 0$, and $\n , \Lambda \in \Z_{\geq 1}$
such that 
$M := \min \left\{ \n - 1 , \Lambda \right\} \geq 1$.
Suppose that 
$\cf := \{ f_1 , \ldots , f_{\n} \} \subset X \setminus \{0\}$,
and that $\Gamma \subset X^{\ast}$ is finite with cardinality 
$\Lambda$. 
Let $a_1 , \ldots , a_{\n} \in \R \setminus \{0\}$ 
and define $\vph \in \Span(\f)$ and $C > 0$ by
(\bI) and (\bII) of 
\eqref{banach_step_bound_lemma__varphi_const}
respectively. That is
\beq
    \label{banach_step_bound_lemma__varphi_const_pf}
        (\bI) \quad \vph := 
        \sum_{i=1}^{\n} a_i f_i
        \qquad \text{and} \qquad
        (\bII) \quad C := \sum_{i=1}^{\n} 
        |a_i||| f_i ||_X > 0.
\eeq
With a view to later applying \textbf{Banach GRIM} 
to approximate $\vph$ on $\Gamma$, for each 
$i \in \{1, \ldots , \n\}$ let $\tilde{a}_i := |a_i|$
and $\tilde{f}_i$ be given by $f_i$ if $a_i > 0$ and
$-f_i$ if $a_i < 0$. For every $i \in \{1, \ldots ,\n\}$
we evidently have 
$\left|\left| \tilde{f}_i \right|\right|_X = 
||f_i||_X$. Moreover, we also have that
$\tilde{a}_1 , \ldots , \tilde{a}_{\n} > 0$
and $\vph = \sum_{i=1}^{\n} \tilde{a}_i \tilde{f}_i$.
Further, we rescale $\tilde{f}_i$ for each 
$i \in \{1, \ldots, \n\}$ to have unit $X$ norm. 
That is (cf. \textbf{Banach GRIM} \ref{Banach_GRIM_ii}),
for each $i \in \{1, \ldots , \n \}$ set
$h_i := \frac{\tilde{f}_i}{||f_i||_X}$ and
$\al_i := \tilde{a}_i || f_i ||_X$. Observe 
both that $C$ satisfies 
\beq
    \label{banach_step_num_varphi_const}
        C := \sum_{i=1}^{\n} |a_i||| f_i ||_X
        =
        \sum_{i=1}^{\n} \tilde{a}_i
        || f_i ||_X 
        =
        \sum_{i=1}^{\n} \al_i,
\eeq
and, for every $i \in \{1, \ldots , \n\}$, that
$\al_i h_i = \tilde{a}_i \tilde{f}_i = a_i f_i$.
Therefore the expansion for $\vph$ in (\bI) of 
\eqref{banach_separation_const} is equivalent to 
\beq
    \label{banach_step_num_vph_def&bd} 
        \vph = \sum_{i=1}^{\n} \al_i h_i,
        \qquad \text{ and hence } \qquad 
        ||\vph||_X \leq 
        \sum_{i=1}^{\n} \al_i 
        || h_i ||_X 
        =
        \sum_{i=1}^{\n} \al_i
        \stackrel{
        \eqref{banach_step_num_varphi_const}
        }{=} C.
\eeq
Define a non-negative integer
$N = N(\Gamma,C,\th,\th_0) \in \Z_{\geq 0}$ by
\beq
    \label{banach_step_bound_lemma_r_packing_pf}
        N := 
        \max \left\{ d \in \Z ~:~
        \begin{array}{c}
            \text{There exists } 
            \sigma_1 , \ldots , \sigma_d 
            \in \Gamma 
            \text{ such that for every } 
            j \in \{1, \ldots , d-1\} \\ 
            \text{we have }
            \sigma_{j+1} \notin 
            \Reach_{||\cdot||_{l^1(\R^{j})}} 
            \left( \{\sigma_1, \ldots , \sigma_{j}\} , 
            2C , \th , \th_0 \right) 
        \end{array}
        \right\}.
\eeq
Suppose $N \leq M := \min \left\{ \n - 1 , \Lambda \right\}$ and
consider applying the \textbf{Banach GRIM} algorithm 
to approximate $\vph$ on $\Gamma$ with
$\th$ as the accuracy threshold, $\th_0$ as
the acceptable recombination error bound, 
$M$ as the maximum number of steps, 
$s_1 = \ldots = s_M = 1$ as the shuffle numbers, and with 
the integers $k_1 , \ldots , k_M$ in Step \ref{Banach_GRIM_i}
of the \textbf{Banach GRIM} algorithm all being chosen 
equal to $1$
(cf. \textbf{Banach GRIM} \ref{Banach_GRIM_i}).

We first prove that the algorithm terminates after
at most $N$ steps have been completed.
Let $\sigma_1 \in \Gamma$ be the linear functional 
chosen in the first step 
(cf. \textbf{Banach GRIM} \ref{Banach_GRIM_i}), and 
$u_1 \in \Span(\cf)$ be the approximation found via 
recombination (cf. the \emph{recombination thinning} 
Lemma \ref{Banach_recombination_lemma})
satisfying, in particular, that 
$|\sigma_1(\vph-u_1)| \leq \th_0$.
Define $\Gamma_1 := \{\sigma_1\} \subset \Gamma$.
We conclude, via Lemma \ref{dist_between_interp_functionals}
(cf. \eqref{Omega_l_est_sep_lemma}), that for every 
$\sigma \in \Reach_{||\cdot||_{l^1(\R)}} 
\left( \Gamma_1 , 2C , \th , \th_0 \right)$ we have 
$|\sigma(\vph-u_1)| \leq \th$.

If $N = 1$ then \eqref{banach_step_bound_lemma_r_packing_pf} 
means there is no $\sigma \in \Gamma$ 
for which $\sigma \notin \Reach_{||\cdot||_{l^1(\R)}} 
\left( \Gamma_1 , 2C , \th , \th_0 \right)$ 
Consequently, 
$\Gamma \cap \Reach_{||\cdot||_{l^1(\R)}} 
\left( \Gamma_1 , 2C , \th , \th_0 \right) = \Gamma$, 
and so we have established that for every 
$\sigma \in \Gamma$ we have $|\sigma(\vph-u_1)| \leq \th$.
Recalling \textbf{Banach GRIM} \ref{Banach_GRIM_iv}, this means
that algorithm terminates before step $2$ is completed.
Hence the algorithm terminates after completing $N=1$ steps.

If $N \geq 2$ then we note that if the stopping criterion 
in \textbf{Banach GRIM} \ref{Banach_GRIM_iv} is triggered 
at the start of step $m \in \{2, \ldots , N\}$ then we 
evidently have that the algorithm has terminated after 
carrying out no more than $N$ steps. 
Consequently, we need only deal with the case in which 
the algorithm reaches and carries out step $N$ without terminating.
In this case, we claim that the algorithm terminates after completing step $N$, i.e. that
the termination criterion at the start of step $N+1$ is triggered.

Before proving this we fix some notation.
Recalling \textbf{Banach GRIM} \ref{Banach_GRIM_iv}, 
for $l \in \{2 , \ldots , N\}$ let $\sigma_l \in \Gamma$
denote the new linear functional selected at step $l$, 
define 
$\Gamma_l := \left\{ \sigma_1 , \ldots , \sigma_l \right\}$,
and let $u_l \in \Span(\cf)$ be the approximation 
found by recombination (cf. the \emph{recombination thinning} 
Lemma \ref{Banach_recombination_lemma}) satisfying, 
for every $s \in \{1, \ldots , l\}$, 
that $|\sigma_s ( \vph - u_l ) | \leq \th_0$.

By appealing to Lemma \ref{dist_between_interp_functionals}
we deduce both  that for any $l \in \{2 , \ldots , N \}$ we have 
(cf. \eqref{banach_separation_dist})
\beq
    \label{sigma_l_not_in_prior_reach}
        \sigma_l \notin 
        \Reach_{||\cdot||_{l^(\R^{l-1})}}
        \left( \Gamma_{l-1} , 2C , \th, \th_0 \right),
\eeq
and that for any 
$\sigma \in \Reach_{||\cdot||_{l^1(\R^l)}} 
\left( \Gamma_l , 2C , \th , \th_0 \right)$ we have
(cf. \eqref{Omega_l_est_sep_lemma})
\beq
    \label{est_on_Gamma_n_reach}
        |\sigma(\vph - u_l)| \leq \th.
\eeq
Consider step $N+1$ of the algorithm at which we 
examine 
$K := \max \left\{ 
| \sigma (\vph - u_N) | : \sigma \in \Gamma \right\}$.
If $K \leq \th$ then the algorithm terminates without
carrying out step $N+1$, and thus has terminated after 
carrying out $N$ steps as claimed. 

Assume that $K > \th$ so that 
$\sigma_{N+1} := \argmax \left\{ 
| \sigma (\vph - u_N) | : \sigma \in \Gamma \right\}$ 
satisfies that $|\sigma_{N+1}(\vph-u_N)| > \th$.
It follows from \eqref{est_on_Gamma_n_reach} 
for $l := N$ that 
$\sigma_{N+1} \notin \Reach_{||\cdot||_{l^1(\R^N)}} 
\left( \Gamma_N , 2C , \th , \th_0 \right)$.
But it then follows from this and 
\eqref{sigma_l_not_in_prior_reach} that 
$\sigma_1 , \ldots , \sigma_{N+1} \in \Gamma$ satisfy
that, for every $j \in \{1 , \ldots , N\}$, that
$\sigma_{j} \notin \Reach_{||\cdot||_{l^1(\R^{j-1})}} 
\left( \Gamma_{j-1} , 2C , \th , \th_0 \right)$.
In which case \eqref{banach_step_bound_lemma_r_packing_pf}
yields that 
$$ N \stackrel{
        \eqref{banach_step_bound_lemma_r_packing_pf}
        }{:=} 
    \max \left\{ d \in \Z ~:~
        \begin{array}{c}
            \text{There exists }
            \sigma_1 , \ldots , \sigma_d
            \in \Gamma 
            \text{ such that for every } 
            j \in \{1, \ldots , d-1\} \\ 
            \text{we have }
            \sigma_{j+1} \notin 
            \Reach_{||\cdot||_{l^1(\R^{j})}} 
            \left( \{\sigma_1, \ldots , \sigma_{j}\} , 
            2C , \th , \th_0 \right) 
        \end{array}
        \right\}
        \geq  N+1$$
which is evidently a contradiction.
Thus we must have that $K \leq \th$, and hence that
the algorithm must terminate before carrying out
step $N+1$.

Having established the claimed upper bound on the number of 
steps before the \textbf{Banach GRIM} algorithm terminates,
we turn our attention to the properties claimed for 
approximation returned after the algorithm terminates.
Let $n \in \{1, \ldots , N\}$ be the integer for which
the \textbf{Banach GRIM} algorithm terminates after step $n$.
Recalling \textbf{Banach GRIM} 
\ref{Banach_GRIM_iii} and \ref{Banach_GRIM_iv},
let $\Gamma_n = \{ \sigma_1 , \ldots , \sigma_n \} 
\subset \Gamma$ be the $n$ linear functionals selected by 
the end of the $n^{\text{th}}$ step, and let $u \in \Span(\cf)$
denote the approximation found via recombination 
(cf. the \emph{recombination thinning}
Lemma \ref{Banach_recombination_lemma}) satisfying, 
for every $s \in \{1, \ldots , n\}$, that 
$|\sigma_s(\vph-u)| \leq \th_0$.

A consequence of Lemma \ref{dist_between_interp_functionals} 
is that for every 
$\sigma \in \Reach_{||\cdot||_{l^1(\R^n)}} 
\left( \Gamma_n , 2C , \th , \th_0 \right)$ we have 
(cf. \eqref{Omega_l_est_sep_lemma})
\beq
    \label{}
        | \sigma(\vph - u)| \leq \th
\eeq
as claimed in \eqref{banach_step_bound_lemma_acc_est}.
Moreover, since we have established that the algorithm 
terminates by triggering the stopping criterion after
completing step $n$, we have, for every $\sigma \in \Gamma$,
that $|\sigma(\vph-u)| \leq \th$ as claimed in the 
second part of \eqref{banach_step_bound_lemma_uM_acc}.

To establish \eqref{banach_step_bound_lemma_coeff_sum}
and the first part of 
\eqref{banach_step_bound_lemma_uM_acc}, first note that
$\min \left\{ \n , n \right\} = n$.
Thus Lemma \ref{Banach_recombination_lemma} additionally 
tells us that recombination returns non-negative coefficients 
$b_{1}, \ldots , b_{n+1} \geq 0$, with
\beq
    \label{banach_step_bound_lemma_um_coeff_sum_pf}
        \sum_{s=1}^{n+1} b_{s}
        =
        \sum_{i=1}^{\n} \al_i
        \stackrel{
        \eqref{banach_step_num_varphi_const}
        }{=}
        C,
\eeq
and indices 
$e(1) , \ldots , e(n+1) \in \{1, \ldots , \n\}$
for which 
\beq
    \label{banach_step_bound_lemma_um_def_pf}
        u = \sum_{s=1}^{n+1} b_{s}
        h_{e(s)}
        =
        \sum_{s=1}^{n+1} 
        \frac{b_{s}}{\left|\left|
        f_{e(s)}\right|\right|_X}
        \tilde{f}_{e(s)}.
\eeq 
For each $s \in \{1 , \ldots , n+1\}$,
we define $c_{s} := \frac{b_{s}}
{\left|\left| f_{e(s)}
\right|\right|_{X}}$ if $\tilde{f}_{e(s)} = f_{e(s)}$
(which we recall is the case if $a_{e(s)} > 0$) 
and $c_{s} := -\frac{b_{s}}
{\left|\left| f_{e(s)}
\right|\right|_{X}}$ if $\tilde{f}_{e(s)} = -f_{e(s)}$
(which we recall is the case if $a_{e(s)} < 0$).
Then \eqref{banach_step_bound_lemma_um_def_pf}
gives the expansion for $u \in \Span(\cf) \subset X$
in terms of the elements $f_1 , \ldots , f_{\n}$
claimed in the first part of
\eqref{banach_step_bound_lemma_uM_acc}.
Moreover, from 
\eqref{banach_step_bound_lemma_um_coeff_sum_pf} 
we have that
\beq
    \label{banach_step_bound_lemma_coeff_sum_got}
        \sum_{s=1}^{n+1} \left|c_{s}\right|
        \left|\left| f_{e(s)}
        \right|\right|_{X}
        =
        \sum_{s=1}^{n+1} b_{s}
        \stackrel{
        \eqref{banach_step_bound_lemma_um_coeff_sum_pf}
        }{=}
        C
\eeq
as claimed in 
\eqref{banach_step_bound_lemma_coeff_sum}.

It remains only to prove that if the coefficients
$a_1 , \ldots , a_{\n} \in \R \setminus \{0\}$ are all positive 
(i.e. $a_1 , \ldots , a_{\n} > 0$), then the resulting
coefficients $c_{1} , \ldots , c_{n+1} \in \R$ are
all non-negative (i.e. $c_{1} , \ldots , c_{n+1} \geq 0$).
To see this, observe that if $a_1 , \ldots , a_{\n} > 0$
then, for every $i \in \{1 ,\ldots , \n\}$, we have that
$\tilde{f}_i = f_i$. Consequently, for every 
$s \in \{1 , \ldots , n+1\}$ we have that 
$\tilde{f}_{e(s)} = f_{e(s)}$, and so by definition 
we have $c_{s} = \frac{b_{s}}{\left|\left| 
f_{e(s)} \right|\right|_X}$. Since $b_{s} \geq 0$, 
it follows that $c_{s} \geq 0$.
This completes the proof of Lemma
\ref{banach_GRIM_step_num_bound_lemma}.
\end{proof}

\subsection{Proof of Main Theoretical Result}
\label{Main_Theoretical_Result_Proof}
In this subsection we prove the 
\textbf{Banach GRIM Convergence} Theorem 
\ref{banach_conv} by
combining Lemmas \ref{banach_robust_lemma}, 
\ref{dist_between_interp_functionals}, and
\ref{banach_GRIM_step_num_bound_lemma}.

\begin{proof}[Proof of Theorem \ref{banach_conv}]
Assume $X$ is a Banach space with dual-space $X^{\ast}$.
Let $\ep > \ep_0 \geq 0$.
Let $\n , \Lambda \in \Z_{\geq 1}$ and set 
$M := \min \left\{ \n - 1 , \Lambda \right\}$.
Let
$\cf := \{ f_1 , \ldots , f_{\n} \} \subset X \setminus \{0\}$ 
and $\Sigma \subset X^{\ast}$ be a finite subset with cardinality 
$\Lambda$.
Let $a_1 , \ldots , a_{\n} \in \R \setminus \{0\}$ and define 
$\vph \in \Span(\cf)$ and $C > 0$ by
\beq
    \label{banach_conv_thm_varphi_C_pf}
        (\bI) \quad \vph := \sum_{i=1}^{\n} 
        a_i f_i
        \qquad \text{and} \qquad
        (\bII) \quad C := \sum_{i=1}^{\n}
        |a_i| || f_i ||_X > 0.
\eeq
Define a non-negative integer 
$N = N(\Sigma,C,\ep,\ep_0)\in \Z_{\geq 0}$,
given by 
\beq
    \label{banach_conv_thm_max_step_num_pf}
        N := 
        \max \left\{ d \in \Z ~:~
        \begin{array}{c}
            \text{There exists }
            \sigma_1 , \ldots , \sigma_d
            \in \Sigma 
            \text{ such that for every } 
            j \in \{1, \ldots , d-1\} \\ 
            \text{we have }
            \sigma_{j+1} \notin 
            \Reach_{||\cdot||_{l^1(\R^{j})}} 
            \left( \{\sigma_1, \ldots , \sigma_{j}\} , 
            2C , \ep , \ep_0 \right) 
        \end{array}
        \right\}.
\eeq
Suppose $N \leq M := \min \left\{ \n - 1 , \Lambda \right\}$ and
consider applying the \textbf{Banach GRIM} algorithm 
to approximate $\vph$ on $\Sigma$ with
$\ep$ as the accuracy threshold, $\ep_0$ as
the acceptable recombination error bound, 
$M$ as the maximum number of steps, 
$s_1 = \ldots = s_M = 1$ as the shuffle numbers, and with 
the integers $k_1 , \ldots , k_M$ in Step \ref{Banach_GRIM_i}
of the \textbf{Banach GRIM} algorithm all being chosen 
equal to $1$
(cf. \textbf{Banach GRIM} \ref{Banach_GRIM_i}).
By appealing to Lemma 
\ref{banach_GRIM_step_num_bound_lemma}, 
with $\ep$, $\ep_0$ and $\Sigma$ here as 
the $\th$, $\th_0$ and $\Gamma$ there respectively,
to conclude that there is some integer 
$n \in \{1, \ldots , N  \}$ for which the algorithm 
terminates after step $n$. 
Thus the \textbf{Banach GRIM} algorithm terminates
after completing, at most, $N$ steps as claimed.

Lemma \ref{banach_GRIM_step_num_bound_lemma} additionally 
tells us that there are coefficients 
$c_1 , \ldots , c_{n+1} \in \R$ and indices
$e(1) , \ldots , e(n+1) \in \{1, \ldots , \n\}$
with (cf. \eqref{banach_step_bound_lemma_coeff_sum})
\beq
    \label{banach_conv_thm_coeff_sum_done}
        \sum_{s=1}^{n+1} |c_s| 
        || f_{e(s)}||_X = C
\eeq
and such that the element $u \in \Span(\cf)$ 
defined by (cf. \eqref{banach_step_bound_lemma_uM_acc})
\beq
    \label{banach_conv_thm_vph_approx_done}
        u := \sum_{s=1}^{n+1} c_s f_{e(s)} 
        \quad \text{satisfies, for every }
        \sigma \in \Sigma \text{ that} \quad 
        |\sigma(\vph-u)| \leq \ep.
\eeq
Observe that \eqref{banach_conv_thm_coeff_sum_done} 
is precisely the claim 
\eqref{banach_conv_thm_coeff_sum}, whilst 
\eqref{banach_conv_thm_vph_approx_done} 
is precisely the claim
\eqref{banach_conv_thm_uM_def_&_acc}.

The final consequence of 
Lemma \ref{banach_GRIM_step_num_bound_lemma} that we 
note is that if the coefficients
$a_1 , \ldots , a_{\n} \in \R \setminus \{0\}$
associated to $\vph$ (cf. (\bI) of 
\eqref{banach_conv_thm_varphi_C_pf}) 
are all positive (i.e. $a_1 , \ldots , a_{\n} > 0$)
then the coefficients $c_{1} , \ldots , c_{n+1} \in \R$
associated with $u$ 
(cf. \eqref{banach_conv_thm_vph_approx_done})
are all non-negative (i.e. 
$c_{1} , \ldots , c_{n+1} \geq 0$. 
This is precisely the preservation of non-negative 
coefficients claimed in Theorem \ref{banach_conv}.

It only remains to verify the claim made in 
\eqref{banach_conv_thm_robust_est}.
For this purpose let $A > 1$ and define 
\beq
    \label{Banach_GRIM_conv_extend_to_set_pf}
        \Omega := 
        \Reach_{||\cdot||_{l^1(\R^{\Lambda})}}
        \left( \Sigma , 2C , A\ep , \ep \right)
        \stackrel{\text{def}}{=}
        \bigcup_{0 \leq t \leq A}
        \ovSpan_{||\cdot||_{l^1(\R^{\Lambda})}}
        \left( \Sigma , t \right)_{
        \frac{(A-1)\ep}{2C}
        }.
\eeq
Then observe that $\Sigma \subset X^{\ast}$ is a finite
subset of cardinality $\Lambda$ and that  
\eqref{banach_conv_thm_coeff_sum_done}
and \eqref{banach_conv_thm_vph_approx_done} mean that
$u \in \Span(\cf)$ 
satisfies both that $||u||_X \leq C$
and that, for every $\sigma \in \Sigma$, we have 
$|\sigma(\vph -u)| \leq \ep$. 
Therefore we may apply Lemma \ref{banach_robust_lemma},
with the integer $d$, the finite subset $L \subset X^{\ast}$, 
and the real numbers $\th > \th_0 \geq 0$
of that result as the integer $\Lambda$, the finite 
subset $\Sigma$, and the real numbers 
$A\ep > \ep > 0$ here, to deduce that 
(cf. \eqref{banach_robust_lemma_conc}) for every 
$\sigma \in \Omega $ we have $|\sigma(\vph-u)| \leq A \ep$.
Recalling the definition of the set $\Omega$ in 
\eqref{Banach_GRIM_conv_extend_to_set_pf}, 
this is precisely the estimate claimed in 
\eqref{banach_conv_thm_robust_est}. 
This completes the proof of Theorem \ref{banach_conv}.
\end{proof}

\section{Numerical Examples}
\label{numerical}
In this section we compare GRIM with several existing 
techniques on two different reduction tasks.
The first task we consider is motivated by an example 
appearing in Section 4 of the work \cite{MMPY15} by 
Maday et al. concerning GEIM.

We consider the Hilbert space $X := L^2(0,1)$, 
and given $(a,b) \in [0.01,24.9] \times [0,15]$ we define 
$f_{a,b} \in X$ by 
\beq
    \label{Maday_L2_task_fab}
        f_{a,b}(x) := \frac{1}{\sqrt{1 + (25 + a \cos (bx) )x^2}}.
\eeq
For a chosen integer $\n \in \Z_{\geq 1}$ we consider the 
collection $\cf \subset X$ defined by
\beq
    \label{Maday_L2_task_cf}
        \cf := \left\{ f_{a,b} :
        (a,b) \in [0.01,24.9]_{\n} \times [0,15]_{\n}
        \right\}.
\eeq 
Here, for real numbers $c,d \in \R$, we use the notation 
$[c,d]_{\n}$ to denote a partition of the interval $[c,d]$
by $\n$ equally spaced points.

For a chosen $M \in \Z_{\geq 1}$ and
$s > 0$, we fix an equally spaced partition 
$y_1 , \ldots , y_M \in [0,1]$ and consider the collection
$\Sigma = 
\left\{ \sigma_k : k \in \{1 , \ldots , M\} \right\}
\subset X^{\ast}$
where, for $ k \in \{1, \ldots , M\}$ and $\psi \in X$, 
\beq
    \label{Maday_L2_task_functionals}
        \sigma_k(\psi) := 
        \fint_{0}^{1} \psi(x) d\rho_k(x) 
        \qquad \text{with} \qquad 
        d\rho_k(x) = e^{ 
        -\frac{(x-y_k)^2}{2s^2}
        }dx.
\eeq 
Finally, we fix the choice $a_1 = \ldots = a_{\n} = 1$
for the coefficients.
The task is to approximate the function 
$\vph := \sum_{f \in \cf} f$ over the collection 
$\Sigma$ of linear functionals.

For the tests we fix the values $M := 1000$, 
$s := 5 \times 10^{-4}$, and consider each of the values 
$N = 20, 25, 30$ individually. 
We compare our implementation of GRIM against our own coded
implementation of GEIM \cite{MM13,MMT14,MMPY15} 
(which, in particular, makes use of the recursive relation 
established in \cite{MMPY15})
and the Scikit-learn implementation
of LASSO \cite{BBCDDGGMPPPPTVVW11}.
The results are summarised in Table 
\ref{Maday_L2_Task_Results} below.
For each approximation $u$ we record the $L^2$-norm of the 
difference $\vph - u$ and the $\sup$-norm of the difference
$\vph - u$ over $\Sigma$, i.e. the values 
$\left( \int_0^1 (\vph(x) - u(x))^2 dx \right)^{1/2}$ and 
$\max \left\{ |\sigma(\vph-u)| : \sigma \in \Sigma \right\}$).

\begin{table}[H]
	\centering
	\begin{tabular}{c|c|c|c}
		~& {\bf{GRIM}}& {\bf{GEIM}} & {\bf{LASSO}} \\ 
		\hline	
        $\n = 20$ & 
        $\begin{aligned}
            &16 ~\text{non-zero weights} \\
            &L^2-\text{norm} = 0.15  \\
            &\sup -\text{norm} = 0.49
        \end{aligned}$ 
        & 
        $\begin{aligned}
            &20 ~\text{non-zero weights} \\
            &L^2-\text{norm} = 0.15  \\
            &\sup -\text{norm} = 0.64
        \end{aligned}$
        & 
        $\begin{aligned}
            &90 ~\text{non-zero weights} \\
            &L^2-\text{norm} = 0.19  \\
            &\sup -\text{norm} = 0.66
        \end{aligned}$ \\
        \hline
        $\n = 25$ & 
        $\begin{aligned}
            &20 ~\text{non-zero weights} \\
            &L^2-\text{norm} = 0.04  \\
            &\sup -\text{norm} = 0.16
        \end{aligned}$ 
        & 
        $\begin{aligned}
            &27 ~\text{non-zero weights} \\
            &L^2-\text{norm} = 0.04  \\
            &\sup -\text{norm} = 0.26
        \end{aligned}$
        & 
        $\begin{aligned}
            &135 ~\text{non-zero weights} \\
            &L^2-\text{norm} = 0.30  \\
            &\sup -\text{norm} = 1.04
        \end{aligned}$ \\
        \hline
        $\n = 30$ & 
        $\begin{aligned}
            &19 ~\text{non-zero weights} \\
            &L^2-\text{norm} = 0.07  \\
            &\sup -\text{norm} = 0.23
        \end{aligned}$ 
        & 
        $\begin{aligned}
            &24 ~\text{non-zero weights} \\
            &L^2-\text{norm} = 0.15  \\
            &\sup -\text{norm} = 0.72
        \end{aligned}$
        & 
        $\begin{aligned}
            &176 ~\text{non-zero weights} \\
            &L^2-\text{norm} = 0.43  \\
            &\sup -\text{norm} = 1.51
        \end{aligned}$
	\end{tabular}
	\caption{The number of non-zero weights and the 
        $L^2$ and $\sup$ norms of the difference between 
        $\vph$ and the approximation are recorded for 
        each technique. The $L^2$ and $\sup$ norm values 
        are recorded to 2 decimal places. For each value of 
        $\n$ we first find the LASSO approximation. 
        Then we record the values for the first GEIM 
        approximation that at least matches the LASSO 
        approximation on both $L^2$ and $\sup$ norm values. 
        Finally we record the values for the first GRIM
        approximation that at least matches the GEIM 
        approximation on both $L^2$ and $\sup$ norm values.}
	\label{Maday_L2_Task_Results}
\end{table}
\vskip 4pt
\noindent
In each case the GRIM approximation is a linear combination
of fewer functions in $\cf$ than both the GEIM approximation 
and the LASSO approximation. Moreover, the GRIM approximation 
of $\vph$ is at least as good as both the GEIM approximation and 
the LASSO approximation in both the $L^2$-norm sense and the 
$\sup$-norm sense.

The second task we consider is a kernel quadrature problem 
appearing in Section 3.2 of \cite{HLO21}. In particular, 
we consider the \emph{3D Road Network} data set \cite{JKY13} 
of 434874 elements in $\R^3$ and the 
\emph{Combined Cycle Power Plant} data set \cite{GKT12}
of 9568 elements in $\R^5$.
For the 3D Road Network data set we take a random subset 
$\Omega$ of size $43487 \approx 434874/10$, whilst for the 
Combined Cycle Power Plant data set we take $\Omega$ to be 
the full 9568 elements.

In both cases we consider $X := \m[\Omega]$ to be the 
Banach space of signed measures on $\Omega$, and take
$\cf$ to be the collection
of point masses supported at points in $\Omega$, i.e. 
$\cf := \left\{ \de_p : p \in \Omega \right\} \subset \m[\Omega]$.
For the collection of linear functionals 
$\Sigma \subset \m[\Omega]^{\ast}$ we take 
$\Sigma$ to be the closed unit ball of the 
\emph{Reproducing Kernel Hilbert Space} (RKHS) $\cH_k$ 
associated to the kernel $k : \Omega \times \Omega \to \R$ 
defined by 
\beq
    \label{Satoshi_Kern_Quad_kernel_def}
        k(x,y) := e^{-\frac{||x-y||^2}{2 \lambda^2}}.
\eeq
Here $|| \cdot ||$ denotes the Euclidean norm on the appropriate
Euclidean space, and $\lambda$ is a hyperparameter determined by
\emph{median heuristics} \cite{HLO21}.
We let $\vph$ denote the equally weighted probability measure 
over $\Omega$ and consider the kernel quadrature problem for 
$\vph$ with respect to the RHKS $\cH_k$. By defining 
$\n := \#(\Omega)$ and
$a_1 = \ldots = a_{\n} = 1/\n$, we observe that this problem 
is within our framework.

In addition to implementing GRIM, we additionally implement a 
modified version of GRIM, which we denote GRIM $+$ opt.
The '$+$ opt' refers to applying the convex optimisation 
detailed in \cite{HLO21} to the weights returned by GRIM at 
each step. The performance of GRIM and GRIM $+$ opt is 
compared with the performance of the methods 
N. $+$ emp, N. $+$ emp $+$ opt, Monte Carlo, iid Bayes, Thinning, 
Thin $+$ opt, Herding, and Herd $+$ opt considered in 
\cite{HLO21}. Details of these methods may be found in 
\cite{HLO21} and the references there in.
We make extensive use of the python code associated with 
\cite{HLO21} available via GitHub (\href{
https://github.com/satoshi-hayakawa/kernel-quadrature
}{Convex Kernel Quadrature GitHub}).

We implement GRIM under the condition that, at each step, 
4 new functions from $\Sigma$ are added to the collection 
of functions over which we require the approximation to coincide
with the target $\vph$.
The performance of each approximation is measured by its
\emph{Worst Case Error} (WCE) with respect to $\vph$ over 
the RKHS $\cH_k$. This is defined as 
\beq
    \label{Satoshi_WCE_def}
        \text{WCE}(u,\vph,\cH_k) := 
        \sup_{f \in \ovB_{\cH_k}(0,1)}
        \left| u(f) - \vph(f) \right|
        =
        \sup_{f \in \ovB_{\cH_k}(0,1)}
        \left| \int_{\Omega} f(x) du(x) - 
        \int_{\Omega} f(x) d \vph(x) \right|,
\eeq
and may be explicitly computed in this setting by the 
formula provided in \cite{HLO21}.
For each method we record the average 
$\log( \text{WCE}(u,\vph,\cH_k)^2 )$ over 20 trials.
The results are illustrated in Figures \ref{3Dnet_results}
and \ref{PPlant_results}.

For both the 3D Road Network and the Combined Cycle Power Plant
data sets the novel recombination-based convex kernel quadrature 
method developed by Satoshi Hayakawa, the first author, and 
Harald Oberhauser in \cite{HLO21} and our GRIM approach
comfortably out perform the other methods, each of which is 
either purely growth-based or purely thinning-based. 
Moreover, the convex kernel quadrature method of \cite{HLO21} is 
specifically tailored to the task of kernel quadrature. 
Whilst being significantly slower, GRIM $+$ opt
nevertheless matches the performance of N. $+$ emp $+$ opt
despite not being specially designed for the task of 
kernel quadrature.
Moreover, even without the additional '$+$ opt'
convex optimisation step, GRIM remains within the same
class of performance as the N. $+$ emp $+$ opt method.

\begin{figure}[H]
\center
\includegraphics[width=0.7\textwidth]
{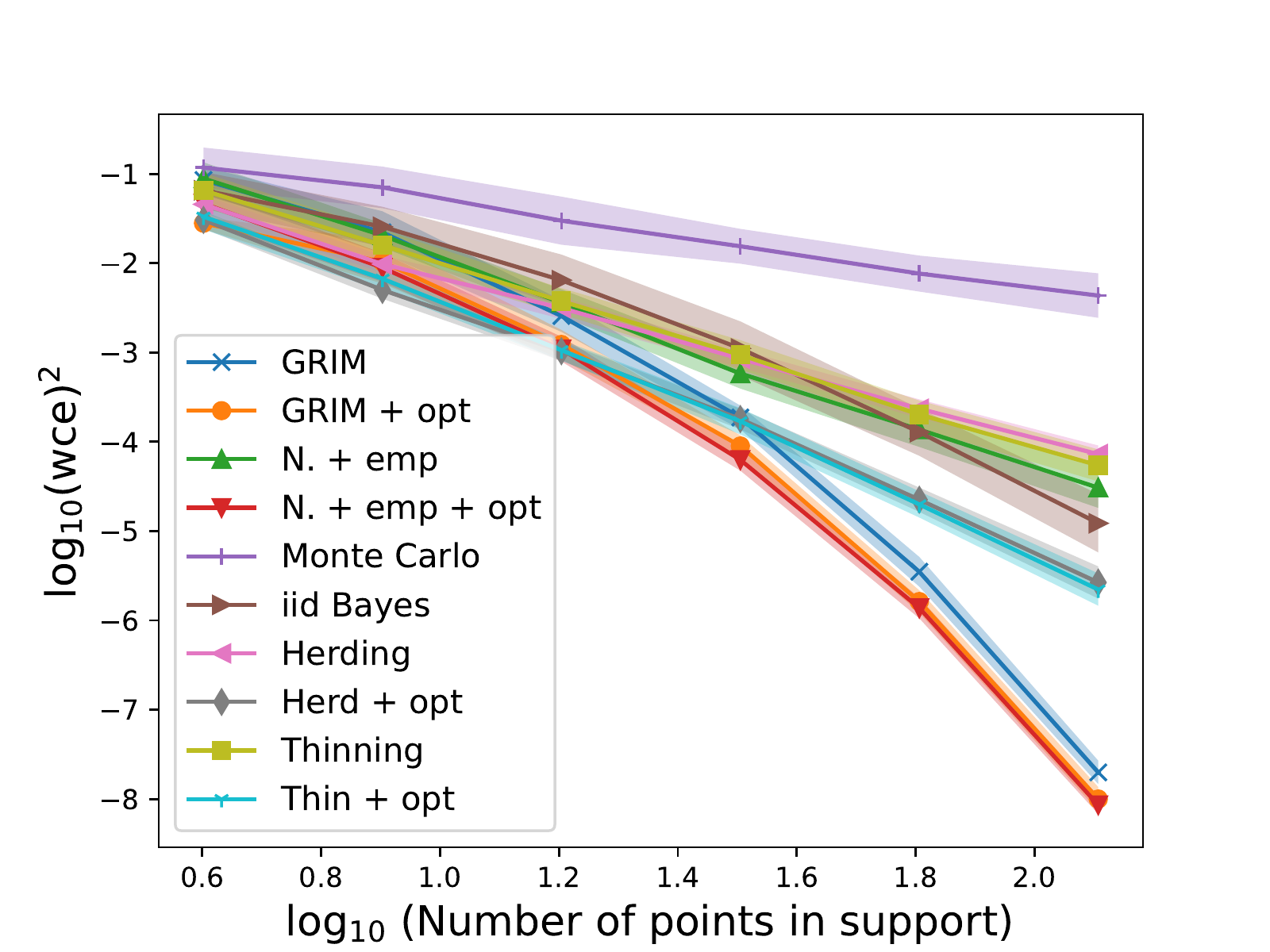}
\caption{3D Road Network Results - The average 
$\log( \text{WCE}(u,\vph,\cH_k)^2 )$ over 20 trials is 
plotted for each method. The shaded regions show their 
standard deviations.}
\label{3Dnet_results}
\end{figure}

\begin{figure}[H]
\center
\includegraphics[width=0.7\textwidth]
{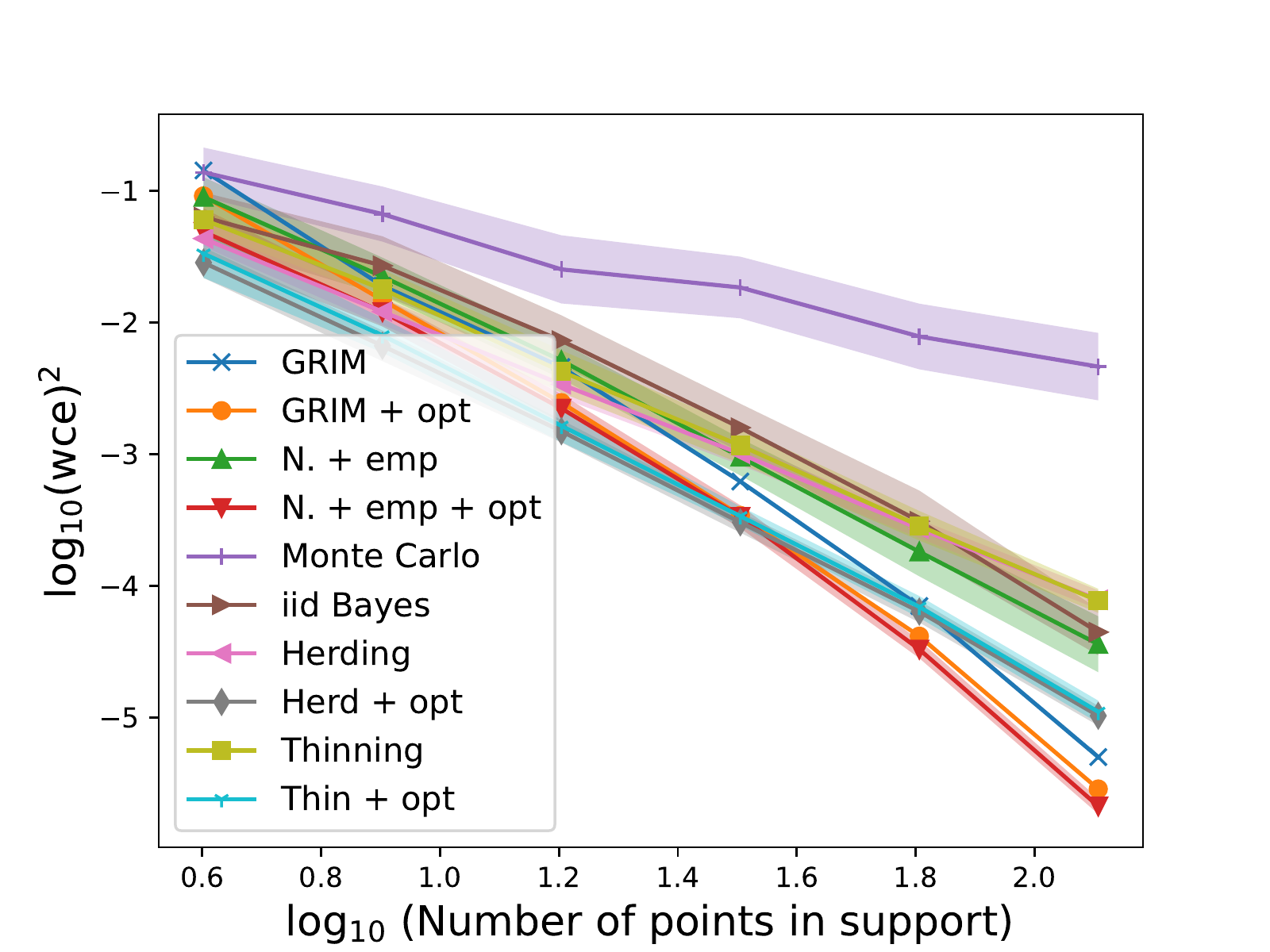}
\caption{Combined Cycle Power Plant Results - The average 
$\log( \text{WCE}(u,\vph,\cH_k)^2 )$ over 20 trials is 
plotted for each method. The shaded regions show their 
standard deviations.}
\label{PPlant_results}
\end{figure}
\vskip 4pt
\noindent
The third and final task we consider is a machine learning 
inference acceleration task motivated by \cite{JLNSY17}. 
The problem is \emph{outside} the Hilbert space framework of the previous examples.
We consider the \emph{path signature} based \emph{Landmark Human Action Recognition} (LHAR) 
model developed in \cite{JLNSY17}.

This model is trained to determine an action from video clips 
of the evolution of 15-40 markers placed on a persons body. 
It utilises signatures of streams generated by the locations of these markers.
An introduction to the use of path signatures in machine 
learning contexts can be found in the survey papers \cite{CK16} and \cite{LM22}.
We do not assume any familiarity with signatures and treat them 
as a 'black box' tool used by the LHAR model of \cite{JLNSY17}.

We restrict consider the LHAR model from \cite{JLNSY17} 
on the JHMDB data set \cite{BGJSZ13} consisting of 928 clips 
of 21 distinct actions, with each clip containing 15-40 frames.
Following \cite{JLNSY17}, the dataset is split into 
660 clips to be used for training and 268 clips to be used for testing.

The pipeline for the LHAR model from \cite{JLNSY17} can be summarised as 
\beq
    \label{LHAR_model}
        \text{Clip} \quad
        \stackrel{\text{Signatures}}{\rightarrow}
        \quad \R^{430920} \quad
        \stackrel{\text{Linear Map}}{\rightarrow}
        \quad \R^{21} \quad
        \stackrel{\text{Bias}}{\rightarrow}
        \quad \R^{21} \quad 
        \stackrel{\text{Softmax}}{\rightarrow}
        \quad \{0, \ldots , 20\}.
\eeq
A variety of truncated path signatures are computed of 
augmentations of the stream of marker locations provided 
by the clip; see \cite{JLNSY17} for full details.
The result is that each clip is transformed into a 
vector of $N := 430920$ features, 
i.e. an element in $\R^{430920} = \R^N$.
Let $\Omega_{\train} \subset \R^{430920}$ denote the collection 
of these vectors for the 660 training clips, 
and $\Omega_{\test} \subset \R^{430920}$ denote the collection 
of these vectors for the 268 testing clips.
A single-hidden-layer neural network is trained as
the classifier outputting a probability distribution 
given by a Softmax over the 21 class labels 
$\{0, \ldots , 20\}$.

Let $A : \R^{N} \to \R^{21}$ denote the Linear Map 
part of the model in \eqref{LHAR_model}.
For later convenience, given any map $v : \R^N \to \R^{21}$
we let $\cl[v]$ denote the model from \cite{JLNSY17} with 
the linear map $A$ replaced by $v$ (i.e. the pipeline 
described in \eqref{LHAR_model} but with the Linear Map 
$A$ replaced by the function $v$). We consider approximating the model $\cl[A]$. 
Our approach is to find an approximation $v$ of $A$, and 
then use the model $\cl[v]$ as the approximation of $\cl[A]$.
For this purpose, we consider both $\R^N$ and $\R^{21}$ to be 
equipped with their respective $l^{\infty}$ norms.
We first observe that the task of approximating $A$ is 
within the mathematical framework of this paper.

Let $(a_{i,j})_{i=1,j=1}^{21,N}$ denote the 
coefficients of the matrix associated to $A$, and 
for each $i \in \{1, \ldots , N\}$ define 
$f_i \in C^0\left( \R^N ; \R^{21} \right)$ by, 
for $x = (x_1 , \ldots , x_N ) \in \R^N$, setting 
$f_i(x) := x_i \left( a_{1,i} , \ldots , a_{21,i} \right)$. 
Let $e_1 , \ldots , e_{21} \in \R^{21}$ be the standard basis of $\R^{21}$ that 
is orthonormal with respect to the standard Euclidean dot product 
$\left< \cdot , \cdot \right>_{\R^{21}}$ on $\R^{21}$.
For each $p \in \Omega_{\train}$ and each $j \in \{1, \ldots , 21\}$ let 
$\de_{p,j} : C^0(\R^N;\R^{21}) \to \R$ be defined by 
$\de_{p,j}[f] := \left< f(p) , e_j \right>_{\R^{21}}$.
Then $A = \sum_{i=1}^N f_i$ and hence,
by choosing $X := C^0 \left( \Omega ; \R^{21} \right)$, 
$\n := N (= 430920)$, $\vph := A$, and 
$\Sigma := \left\{ \de_{p,j} : p \in \Omega_{\train} \text{ and } j \in \{1, \ldots , 21\} \right\}$, 
we observe that we are within the 
framework for which GRIM is designed (cf. Section \ref{Motivation}).

To provide a benchmark we use the Scikit-Learn 
MultiTaskLasso \cite{BBCDDGGMPPPPTVVW11} implementation 
to approximate $\vph$. 
Several different choices for the regularisation 
parameter $\al$ are considered. 
For each $\al$ we consider the Scikit-Learn MultiTaskLasso 
\cite{BBCDDGGMPPPPTVVW11} with the maximum number of iterations
set to 10000.
We record the number of non-zero weights
in the resulting approximation $u$, the training set 
accuracy achieved by $\cl[u]$, and the testing set 
accuracy achieved by $\cl[u]$. These results are 
summarised in Table \ref{LHAR_lasso_results}.

\begin{table}[H]
	\centering
	\begin{tabular}{|c|c|c|c|}
        \hline 
        \textbf{Alpha}
        &
        \textbf{Number of Non-zero Weights} 
        & 
        \textbf{Train Accuracy (\%) (2.d.p)}
        &
        \textbf{Test Accuracy (\%) (2.d.p)}
        \\ 
		\hline	
        0.0001 & 5464 & 100.00 & 81.34 \\
        0.0002 & 4611 & 100.00 & 82.09 \\
        0.0003 & 4217 & 100.00 & 83.21 \\
        0.0004 & 4021 & 100.00 & 82.84 \\
        0.0005 & 4009  & 100.00 & 82.46  \\
        0.001 & 3913 & 100.00 & 82.46  \\
        0.005 & 3157  & 99.70 & 81.72  \\
        0.01 & 2455 & 99.24 & 81.72  \\
        0.05 & 1012 & 95.30 & 79.85 \\
        \hline
	\end{tabular}
	\caption{Each row contains information corresponding
        to one application of MultiTaskLasso 
        \cite{BBCDDGGMPPPPTVVW11}
        to find an approximation $u$ of $\vph$.
        The model $\cl[u]$ then gives an approximation to 
        the model $\cl[\vph]$ from \cite{JLNSY17} on 
        the JHMDB dataset \cite{BGJSZ13}.
        The first column records the value of the regularisation 
        parameter $\al$ for this application of 
        MultiTaskLasso.
        The second column records the number of non-zero 
        weights appearing in the returned approximation $u$.
        The third column records the accuracy achieved 
        on the training set (to 2 decimal places) by
        the model $\cl[u]$.
        The fourth column records the accuracy achieved 
        on the test set (to 2 decimal places) by
        the model $\cl[u]$.}
	\label{LHAR_lasso_results}
\end{table}
\noindent
In terms of the test accuracy score achieved by the model 
$\cl[u]$, the choice of $\al := 3 \times 10^{-4}$ performed best.
For this choice of $\al$ the corresponding model $\cl[u]$ 
achieved an accuracy score of $83.21\%$ (2.d.p) on the test set 
whilst only using 4217 non-zero weights.
In comparison, the original model $\cl[\vph]$ developed in 
\cite{JLNSY17} uses 430920 non-zero weights, and achieves an 
accuracy score of $83.58\%$ (2.d.p) on the test set.

With the Lasso performance acting as a benchmark, we 
consider applying GRIM to approximate $\vph$.
We consider several choices for the number of new points
from $\Sigma$ to be added to the collection of points 
over which we require the approximation to coincide with
the target $\vph$. We fix the shuffle number for each step as 16. 
We apply GRIM to approximate $\vph$ over $\Omega_{\train}$,
i.e. we use only the training set. 
The test set is not used for constructing the approximation
$u$, and is only used to record the accuracy achieved by model 
generated by the approximation $\cl[u]$.

We make the following adaptation to the GRIM algorithm for this task. 
Our aim is to find an approximation $u$ of $\vph$ that is close in a 
pointwise sense to $\vph$ throughout $\Omega_{\train}$.
Instead of considering each linear functional in $\Sigma$ individually, 
we consider each collection $\Delta_{p} := \left\{ \de_{p,j} : j \in \{1, \ldots , 21\} \right\}$ 
for a point $p \in \Omega_{\train}$.
We modify the \textbf{Banach Extension Step} (cf. Section \ref{Banach_GRIM_alg}) 
to the \textbf{Modified Extension Step} below.
\vskip 4pt
\noindent
\textbf{Modified Extension Step}

\noindent
Assume that $L' \subset \Sigma$. Let $u \in \Span(\cf)$. Let $m \in \Z_{\geq 1}$ such that 
$\#(L') + 21m \leq \#(\Sigma)$. Take 
\beq
	\label{mod_banach_ext_step_sigma_1}
		\sigma_1 := \argmax \left\{ |\sigma(\vph-u)| : \sigma \in \Sigma \right\}.
\eeq
Let $p_1 \in \Omega_{\train}$ denote the point for which $\sigma_1 \in \Delta_{p_1}$.

\noindent
Inductively for $j=2,\ldots,m$ take 
\beq
	\label{mod_banach_ext_step_sigma_j}
		\sigma_j := \argmax \left\{ |\sigma(\vph-u)| : \sigma \in \Sigma \setminus (\Delta_{p_1} \cup \dots \cup \Delta_{p_{j-1}}) \right\}
\eeq
and let $p_j \in \Omega_{\train}$ denote the point for which $\sigma_j \in \Delta_{p_j}$.

\noindent 
Once the points $p_1 , \ldots , p_m \in \Omega_{\train}$ have been selected, extend $L'$ to 
$L := L' \cup \Delta_{p_1} \cup \dots \cup \Delta_{p_m}$.
\vskip 4pt
\noindent
Since $||u(p) - \vph(p)||_{l^{\infty}(\R^{21})} = \max \left\{ |\sigma (u-\vph)| : \sigma \in \Delta_p \right\}$, 
we observe that the \textbf{Modified Extension Step} is equivalent to taking 
\beq
	\label{point_p_1}
		p_1 := \argmax \left\{ ||u(p)-\vph(p)||_{l^{\infty}(\R^{21})} : p \in \Omega_{\train} \right\},
\eeq 
then inductively taking for $j=2,\ldots,m$ the point 
\beq
	\label{point_p_j}
		p_j := \argmax \left\{ ||u(p)-\vph(p)||_{l^{\infty}(\R^{21})} : p \in \Omega_{\train} \setminus \{p_1 , \ldots , p_{j-1} \} \right\},
\eeq 
and then extending $L'$ to $L := L' \cup \Delta_{p_1} \cup \dots \cup \Delta_{p_m}$.
Consequently, replacing the \textbf{Banach Extension Step} with the 
\textbf{Modified Extension Step} in the \textbf{Banach GRIM} algorithm in Section \ref{Banach_GRIM_alg} results 
in an algorithm in which a collection of points, at which we require an approximation $u$ of $\vph$ to coincide with 
$\vph$, is inductively greedily grown. 
It is important to keep in mind that each newly selected point $p \in \Omega_{\train}$ corresponds to choosing 21 new linear 
functionals in $\Sigma$.
Thus the restriction on the total number of points $K$ to possibly be selected is that $21K \leq \#(\Sigma)$, 
which is equivalent to requiring $K \leq \#(\Omega_{\train}) = 660$.
It is this modification of the \textbf{Banach GRIM} algorithm (cf. Section \ref{Banach_GRIM_alg}) that we consider for 
the task of approximating the linear map $\vph : \R^N \to \R^{21}$.

Our aim here is that the model $\cl[u]$ achieves a similar accuracy score as the original model $\cl[\vph]$ from 
\cite{JLNSY17}. Consequently, we alter the termination criteria to better reflect this goal.
The model from \cite{JLNSY17} achieves an accuracy score of $83.58\%$ on the test set.
We choose $\de := 10^{-3}$ and terminate if the approximation generates a model achieving an accuracy score
of at least $(1-\de)$ times the accuracy score of the original model on the testing set.
Hence we require the approximation to generate a model achieving a testing set accuracy score of at least 
$83.50\%$ to 2 decimal places.
We further record the accuracy achieved on the training set for comparison with the $100\%$ accuracy achieved by the 
original model here.

For each choice of number of points to add at each step, we run the GRIM algorithm (modified as outlined above) three times. 
We record the minimum number of non-zero weights returned 
after an approximation generating a model with the required test set accuracy score. 
Subsequently, we record the training accuracy score and testing accuracy score of the models found by each run
using this number of non-zero weights. 
In addition to recording these values for the best performing model, we additionally record the variance of the 
training accuracy score and testing accuracy score over the three runs. The results are summarised in Table 
\ref{LHAR_reduction_results} below.

\begin{table}[H]
	\centering
	\begin{tabular}{|c|c|c|c|c|c|}
        \hline
		\textbf{Points Added} 
        & 
        \textbf{Number of} 
        & 
        \multicolumn{2}{c|}{\textbf{Train Accuracy (\%)}}
        &
        \multicolumn{2}{c|}{\textbf{Test Accuracy (\%)}}
        \\ 
        \textbf{Per Step} & \textbf{Non-zero Weights} & 
        Best & Variance & Best & Variance \\
		\hline	
        10 & 2101 & 100.00 & 0.04 & 83.58 & 1.12  \\
        20 & 2941 & 100.00 & 0.00 & 83.58 & 0.09 \\
        30 & 2521 & 100.00 & 0.02 & 83.58 & 0.49 \\
        40 & 2521 & 100.00 & 0.01 & 84.33 & 1.33 \\
        50 & 2101 & 99.55 & 0.04 & 83.96 & 0.65 \\
        \hline
	\end{tabular}
	\caption{Each row contains information corresponding
        to our application of GRIM to approximate the 
        action recognition model from \cite{JLNSY17} on 
        the JHMDB dataset \cite{BGJSZ13}.
        The first column records the number of new points 
        added at each step of GRIM.
        The second column records the minimum number of non-zero 
        weights appearing in a returned approximation $u$ 
        generating a model $\cl[u]$ achieving a testing set 
        accuracy score of at least $83.50\%$ (2.d.p).
        The third column summarises the training accuracy 
        scores achieved during the three runs for each choice 
        of number of points to add per step. The first half 
        of the third column records the best training accuracy 
        score (to 2 decimal places) achieved by
        the model $\cl[u]$ for an approximation $u$ of $\vph$ 
        returned by one of the runs of GRIM. 
        The second half of the third column records the variance
        (to 2 decimal places)
        of the training accuracy scores achieved by the 
        models $\cl[u]$ for the approximations $u$ of $\vph$ 
        returned by each of the three runs of GRIM.
        The fourth column summarises the testing accuracy 
        scores achieved during the three runs for each choice 
        of number of points to add per step.
        The first half of the fourth column records the 
        best test accuracy score (to 2 decimal places) achieved 
        by the model $\cl[u]$ for an approximation $u$ of $\vph$ 
        returned by one of the runs of GRIM.
        The second half of the third column records the variance
        (to 2 decimal places)
        of the test accuracy scores achieved by the 
        models $\cl[u]$ for the approximations $u$ of $\vph$ 
        returned by each of the three runs of GRIM.}
	\label{LHAR_reduction_results}
\end{table}
\noindent
For each choice of the number of new points to be added at 
each step, GRIM successfully finds an approximation $u$ of 
$\vph$ which is both uses fewer non-zero weights than the 
best performing model found via Lasso, and achieves a 
higher test accuracy score than the best performing model 
found via Lasso.
The best performance, in terms of accuracy score on the test
set, was achieved by GRIM with the choice of adding 40 new 
points at each step. 
The best performing model uses 2521 non-zero weights and 
achieves an accuracy score of $84.33\%$ (2.d.p) on the 
test set. 
This accuracy score is actually higher than the accuracy score
of $83.58\%$ (2.d.p) achieved by the original model $\cl[\vph]$
from \cite{JLNSY17} on the test set.

\vskip 4pt 
\noindent
University of Oxford, Radcliffe Observatory,
Andrew Wiles Building, Woodstock Rd, Oxford, 
OX2 6GG, UK.
\vskip 4pt
\noindent
TL: tlyons@maths.ox.ac.uk \\
\url{https://www.maths.ox.ac.uk/people/terry.lyons}
\vskip 4pt
\noindent
AM: andrew.mcleod@maths.ox.ac.uk \\
\url{https://www.maths.ox.ac.uk/people/andrew.mcleod}

\begin{thebibliography}{1}

\bibitem[\textcolor{blue}{ACO20}]{ACO20}
    A. Abate, F. Cosentino, and H. Oberhauser,
    \emph{A randomized algorithm to reduce the support of 
    discrete measures},
    In Adavances in Neural Information Processing Systems,
    Volume \textbf{33}, pages 15100-15110, 
    2020.

\bibitem[\textcolor{blue}{ABDHP21}]{ABDHP21}
    D. Alistarh, T. Ben-Nun, N. Dryden, T. Hoefler 
    and A. Peste,
    \emph{Sparsity in Deep Learning: Pruning and 
    growth for efficient inference and training in 
    neural networks},
    J. Mach. Learn. Res., 
    \textbf{22}(241), 1-124, 2021.
    
\bibitem[\textcolor{blue}{ABCGMM18}]{ABCGMM18}
    J.-P. Argaud, B. Bouriquet, F. de Caso, 
    H. Gong, Y. Maday and O. Mula,
    \emph{Sensor Placement in Nuclear Reactors 
    Based on the Generalized Empirical Interpolation 
    Method},
    J. Comput. Phys., vol. {\bf{363}},
    pp. 354-370, 2018.
    
\bibitem[\textcolor{blue}{ABGMM16}]{ABGMM16}
    J.-P. Argaud, B. Bouriquet, H. Gong, 
    Y. Maday and O. Mula,
    \emph{Stabilization of (G)EIM in Presence
    of Measurement Noise: Application to Nuclear
    Reactor Physics},
    In Spectral and High Order Methods for 
    Partial Differential Equations-ICOSAHOM 2016,
    vol. {\bf{119}} of Lect. Notes Comput. 
    Sci. Eng., pp.133-145, 
    Springer, Cham, 2017.

\bibitem[\textcolor{blue}{AK13}]{AK13}
    M. G. Augasts and T. Kathirvalavakumar,
    \emph{Pruning algorithms of neural networks - 
    A comparative study},
    Open Computer Science \textbf{3}, 
    p.105-115, 2013.

\bibitem[\textcolor{blue}{BLL15}]{BLL15}
    Francis Bach, Simon Lacoste-Julien and 
    Fredrik Lindsten, 
    \emph{Sequential Kernel Herding: Frank-Wolfe
    Optimization for Particle Filtering},
    In Artificial Intelligence and Statistics,
    pages 544-552, PMLR, 2015.
  
\bibitem[\textcolor{blue}{BMPSZ08}]{BMPSZ08}    
    F. R. Bach, J. Mairal, J. Ponce, G. Sapiro and
    A. Zisserman,  
    \emph{Discriminative learned dictionaries for local
    image analysis},
    Compute Vision and Pattern Recognition (CVPR), 
    pp.1-8, IEEE 2008.
    
\bibitem[\textcolor{blue}{BMPSZ09}]{BMPSZ09}    
    F. R. Bach, J. Mairal, J. Ponce, G. Sapiro and
    A. Zisserman,
    \emph{Supervised dictionary learning},
    Advances in Neural Information Processing Systems,
    pp.1033-1040, 2009.
    
\bibitem[\textcolor{blue}{BMNP04}]{BMNP04}
    M. Barrault, Y. Maday, N. C. Nguyen and 
    A. T. Patera, 
    \emph{An Empirical Interpolation Method: 
    Application to Efficient Reduced-Basis 
    Discretization of Partial Differential Equations},
    C. R. Acad. Sci. Paris, 
    S\'{e}rie I., {\bf{339}}, pp.667-672,
    2004
    
\bibitem[\textcolor{blue}{BS18}]{BS18}
    A. G. Barto and R. S. Sutton,
    \emph{Reinforcement learning: an introduction},
    (2nd). Cambridge, MA, USA: MIT Press.

\bibitem[\textcolor{blue}{BT11}]{BT11}
    Alain Belinet and Christine Thomas-Agnan,
    \emph{Reproducing kernel Hilbert spaces in 
    probabiility and statistics},
    Springer Science \& Business Media, 2011.

\bibitem[\textcolor{blue}{BBCDDGGMPPPPTVVW11}]{BBCDDGGMPPPPTVVW11}
    M. Blondel, M. Brucher, D. Cournpeau, 
    V. Dubourg, E. Duchesnay, A. Gramfort, O. Grisel, 
    V. Michel, A. Passos, F. Pedregosa, M. Perrot, 
    P. Prettenhofer, B. Thirion, J. Vanderplas, 
    G. Varoquax and R. Weiss,
    \emph{Scikit-learn: Machine Learning in Python},
    Journal of Machine Learning Research, 
    \textbf{12}, 2825-2830, 
    2011.
    
\bibitem[\textcolor{blue}{BCGMO18}]{BCGMO18}
    F. -X. Briol, W. Y. Chen, J. Gorham,
    L. Mackey and C. J. Oates,
    \emph{Stein Points},
    In Proceedings of the 35th International 
    Conference on Machine Learning,
    Volume {\bf{80}}, pp. 843-852,
    PMLR, 2018.

\bibitem[\textcolor{blue}{BGJSZ13}]{BGJSZ13}
    J. Black, J. Gall, H. Jhuang, C. Schmid and S. Zuffi,
    \emph{Towards understanding action recognition},
    In IEEE International Conference on Computer Vision (ICCV),
    pp. 3192-3199, 2013.

\bibitem[\textcolor{blue}{CT05}]{CT05}    
    E. Cand\'{e}s and T. Tao,
    \emph{Decoding by linear programming},
    IEEE Trans. Inform. Theory, {\bf{51}},
    2005.
    
\bibitem[\textcolor{blue}{CRT06}]{CRT06}
    E. Cand\'{e}s, J. Romberg and T. Tao,
    \emph{Robust Uncertainty Principles:
    Exact Signal Reconstruction from Highly
    Incomplete Frequency Information},
    IEEE Trans. Inform. Theory, {\bf{52}},
    pp. 489-509, 2006.

\bibitem[\textcolor{blue}{Car81}]{Car81}
    B. Carl,
    \emph{Entropy numbers of diagonal operators with 
    an application to eigenvalue problems},
    J. Approx. Theory, 
    \textbf{32}, pp.135-150, 1981.

\bibitem[\textcolor{blue}{CS90}]{CS90}
    B. Carl and I. Stephani,
    \emph{Entropy, Compactness, and the Approximation of
    Operators},
    Cambridge University Press, Cambridge, UK, 1990.

\bibitem[\textcolor{blue}{CHXZ20}]{CHXZ20}
    L. Chen, A. Huang, S. Xu and B. Zhang, 
    \emph{Convolutional Neural Network Pruning: 
    A Survey},
    39th Chinese Control Conference (CCC), 
    IEEE, p.7458-7463, 2020.

\bibitem[\textcolor{blue}{CSW11}]{CSW11}
    Zhixiang Chen, Baohuai Sheng and Jianli Wang,
    \emph{The Covering Number for Some Mercer Kernel 
    Hilbert Spaces on the Unit Sphere},
    Taiwanese J. Math. \textbf{15}(3), 
    p.1325-1340, 2011.

\bibitem[\textcolor{blue}{CSW10}]{CSW10}
    Y. Chen, A. Smola and M. Welling,
    \emph{Super-Samples from Kernel Herding},
    In Proceedings of the Conference on Uncertainty
    in Artificial Intelligence, 2010.

\bibitem[\textcolor{blue}{CS08}]{CS08}
    A. Christmann and I. Steinwart, 
    \emph{Support Vector Machines},
    Springer Science \& Business Media, 2008.
    
\bibitem[\textcolor{blue}{CM73}]{CM73}
    Jon F. Claerbout and Francis Muir,
    \emph{Robust Modeling with Erratic Data},
    Geophysics,
    Vol. {\bf{38}},
    So. 5, pp. 826-844,
    (1973)

\bibitem[\textcolor{blue}{CK16}]{CK16}
    Ilya Chevyrev and Andrey Kormilitzin,
    \emph{A Primer on the Signature Method in Machine Learning},
    arXiv preprint, 2016.
    \url{https://arxiv.org/abs/1603.03788}

\bibitem[\textcolor{blue}{DGOY08}]{DGOY08}
    J. Darbon, D. Goldfarb, S. Osher and W. Yin,
    \emph{Bregman Iterative Algorithms for 
    L1 Minimization with Applications to 
    Compressed Sensing},
    SIAM J. Imaging Sci. {\bf{1}}, pp.143-168,
    2008.
    
\bibitem[\textcolor{blue}{Don06}]{Don06}
    D. Donoho,
    \emph{Compressed Sensing},
    IEEE Trans. Inform. Theory, 
    {\bf{52}}, pp. 1289-1306, 2006.
    
\bibitem[\textcolor{blue}{DE03}]{DE03}
    D. Donoho and M. Elad, 
    \emph{Optimally Sparse Representation 
    in General (Nonorthogonal) Dictionaries
    via $l^1$ Minimization},
    Proc. Natl. Acad. Sci. USA, 
    {\bf{100}}, pp.2197-2202, 2003.

\bibitem[\textcolor{blue}{DET06}]{DET06}
    D. L. Donoho, M. Elad and V. Templyakov,
    \emph{Stable recovery of sparse overcomplete representations 
    in the presence of noise},
    IEEE Transactions on Information Theory, 
    \textbf{52(1)}: 6-18, 
    2006.
    
\bibitem[\textcolor{blue}{DM21a}]{DM21a}
    Raaz Dwivedi and Lester Mackey,
    \emph{Kernel Thinning},
    Proceedings of Machine Learning Research 
    vol. \textbf{134}:1–1, 2021.
    
\bibitem[\textcolor{blue}{DM21b}]{DM21b}
    Raaz Dwivedi and Lester Mackey,
    \emph{Generalized Kernel Thinning},
    Published in ICLR 2022.
    
\bibitem[\textcolor{blue}{DMS21}]{DMS21}
    Raaz Dwivedi, Lester Mackey and
    Abhishek Shetty,
    \emph{Distribution Compression in Near-Linear
    Time},
    Published in ICLR 2022.
    
\bibitem[\textcolor{blue}{DF15}]{DF15} 
    Chris Dyer and Manaal Faruqui,
    \emph{Non-distributional Word Vector 
    Representations},
    Proceedings of the 53rd Annual Meeting of the 
    Association for Computational Linguistics and 
    the 7th International Joint Conference on 
    Natural Language Processing,
    Association for Computational Linguistics,
    Beijing, China,
    Volume {\bf{2}}: Short Papers,
    pp.464-469, 2015.

\bibitem[\textcolor{blue}{ET96}]{ET96}
    D. E. Edmunds and H. Triebel, 
    \emph{Function Spaces, Entropy Numbers, and 
    Differential Operators},
    Cambridge University Press, Cambridge, UK, 1996.

\bibitem[\textcolor{blue}{Ela10}]{Ela10}
    M. Elad, 
    \emph{Sparse and Redundant Representations: From 
    Theory to Applications in Signal and Image Processing},
    Springer, 
    ISBN 978-1441970107,
    2010.
    
\bibitem[\textcolor{blue}{EMS08}]{EMS08}
    M. Elad, J. Mairal and G. Sapiro,
    \emph{Sparse representation for color image
    restoration},
    IEEE Transactions on Image Processing,
    {\bf{17}}(1), 53-69, 2008.

\bibitem[\textcolor{blue}{EJOPR20}]{EJOPR20}
    E. Elsen, S. Jayakumar, S. Osindere, R. Pascanu and 
    J. Rae,
    \emph{Top-kast: Top-k always sparse training},
    Advances in Neural Information Processing Systems, 
    \textbf{33}:20744-20754, 2020.

\bibitem[\textcolor{blue}{EPP00}]{EPP00}
    T. Evgeniou, M. Pontil and T. Poggio,
    \emph{Regularization networks and support vector 
    machines},
    Adv. Comput. Math., \textbf{13}, pp.1-50, 2000.

\bibitem[\textcolor{blue}{FHLLMMPSWY23}]{FHLLMMPSWY23}
    Meng Fang, Tianjin Huang, Gen Li, Shiwei Liu, 
    Xiaolong Ma, Vlado Menkovski, Mykola Pechenizkiy, 
    Li Shen, Zhangyang Wang and Lu Yin, 
    \emph{Dynamic Sparsity in Channel-Level Sparsity Learner},
    NeurIPS 2023.

\bibitem[\textcolor{blue}{FJM19}]{FJM19}
    Dan Feldman, Ibrahim Jubran and Alaa Maalouf,
    \emph{Fast and Accuract Least-Mean-Squares Solvers},
    Advances in Neural Information Processing Systems 
    \textbf{32} (NeurIPS2019), 2019.

\bibitem[\textcolor{blue}{FJM22}]{FJM22}
    Dan Feldman, Ibrahim Jubran and Alaa Maalouf,
    \emph{Fast and Accurate Least-Mean-Squares Solvers for 
    High Dimensional Data},
    in IEEE Transactionf on Pattern Analysis and Machine 
    Intelligence, 
    vol. \textbf{44}, no. 12, pp. 9977-9994, 
    2022.

\bibitem[\textcolor{blue}{FS21}]{FS21}
    S. Fischer and I. Steinwart, 
    \emph{A closer look at covering number bounds 
    for Gaussian kernels},
    Journal of Complexity, 
    Volume \textbf{62}, 2021.

\bibitem[\textcolor{blue}{FMMT14}]{FMMT14}
    Alona Fyshe, Tom M. Mitchell, Brian Murphy 
    and Partha P. Talukdar,
    \emph{Interpretable semantic Vectors from a 
    joint model of brain-and text-based meaning},
    Proceedings of the 52nd Annual Meeting of the 
    Association for Computational Linguistics,
    Association for Computational Linguistics,
    Baltimore, Maryland,
    Volume {\bf{1}}: Long Papers,
    pp.489-499, 2014.

\bibitem[\textcolor{blue}{GMSWY09}]{GMSWY09}
    A. Ganesh, Y. Ma, S. S. Sastry, J. Wright 
    and A. Yang,
    \emph{Robust face recognition via sparse 
    representation},
    TPAMI {\bf{31}}(2), 210-227, 2009.

\bibitem[\textcolor{blue}{GLMNS18}]{GLMNS18}
    M. Gibescu, A. Liotta, D. C. Mocanu, E. Mocanu, 
    P. H. Nguyen and P. Stone,
    \emph{Scalable training of artificial neural networks with 
    adaptive sparse connectivity inspired by network science},
    Nature communications, 
    \textbf{9}(1):2383, 2018.

\bibitem[\textcolor{blue}{GLSWZ21}]{GLSWZ21}
    John Glossner, Tailin Liang, Shaobo Shi, 
    Lei Wang and Xiaotong Zhang,
    \emph{Pruning and quantization for deep neural 
    network acceleration: A survey},
    Neurocomputing, Volume \textbf{461}, 
    pages 370-403, 2021.

\bibitem[\textcolor{blue}{GO09}]{GO09}    
    T. Goldstein and S. Osher,
    \emph{The Split Bregman Method for 
    L1-Regularized Problems},
    SIAM J. Imaging Sci. {\bf{2}},
    pp.323-343, 2009.
    
\bibitem[\textcolor{blue}{GMNP07}]{GMNP07}
    M. A. Grepl, Y. Maday, N. C. Nguyen and 
    A. T. Patera,
    \emph{Efficient Reduced-Basis Treatment of 
    Nonaffine and Nonlinear Partial Differential
    Equations},
    M2AN (Math. Model. Numer. Anal.),
    2007.

\bibitem[\textcolor{blue}{GKT12}]{GKT12}
    F. S. G\"{u}rgen, H. Kaya and P. T\"{u}fekci,
    \emph{Local and global learning methods for predicting
    power of a combined gas \& steam turbine},
    In Proceedings of the Internationl Conference on 
    Emerging Trends in Computer and Electronics Engineering,
    pages 13-18, 
    2012.

\bibitem[\textcolor{blue}{HLO21}]{HLO21}
    Satoshi Hayakawa, Terry Lyons and Harald Oberhauser,
    \emph{Positively weighted kernel Quadrature via
    Subsampling}, 
    In Adavnces in Neural Information Processing Systems 
    (NeurIPS 2022).
    
\bibitem[\textcolor{blue}{HS19}]{HS19}
    J. Hernandez-Garcia and R. Sutton,
    \emph{Learning Sparse Representations Incrementally
    in Deep Reinforcement Learning},
    \url{https://arxiv.org/abs/1912.04002},
    [cs.LG], 9 Dec 2019.

\bibitem[\textcolor{blue}{HLLL18}]{HLLL18}
    J. Huang, H. Lian, H. Lin and S. Lv, 
    \emph{Oracle Inequalities for Sparse Additive Quantile
    Regression in Reproducing Kernel Hilbert Space},
    Annals of Statistics \textbf{46}, p.781-813, 2018.

\bibitem[\textcolor{blue}{JLPST21}]{JLPST21}
    S. Jayakumar, P. E. Latham, R. Pascanu, J. Schwarz and 
    Y. Teh,
    \emph{Powerpropagation: A sparsity inducing weight 
    reparameterisation},
    Advances in Neural Information Processing Systems,
    \textbf{34}:28889-28903, 
    2021.

\bibitem[\textcolor{blue}{JKY13}]{JKY13}
    C. S. Jensen, M. Kaul and B. Yang,
    \emph{Building accurate 3d spatial networks to enable
    next generation intelligent transportation systems},
    In 2013 IEEE 14th International Conference on Mobile Data
    Management, Volume \textbf{1}, pages 137-146, 
    IEEE, 2013.

\bibitem[\textcolor{blue}{JJWWY20}]{JJWWY20}
    C. Jin, M. I. Jordan, M. Wang, Z. Wang and 
    Z. Yang, 
    \emph{On Function Approximation in Reinforcement
    Learning: Optimism in the Face of Large State Spaces},
    34th Conference on Neural Information Processing 
    Systems (NeurIPS 2020), Vancouver, Canada, 2020

\bibitem[\textcolor{blue}{JLNSY17}]{JLNSY17} 
    Lianwen Jin, Terry Lyons, Hao Ni, Cordelia Schmid and 
    Weixin Yang,
    \emph{Developing the Path Signature Methodology
    and its Application to Landmark-based Human 
    Action Recognition},
    In: Yin, G., Zariphopoulou, T. (eds)
    Stochastic Analysis, Filtering and Stochastic Optimization, 
    Springer, Cham. 2022.
    \url{https://doi.org/10.1007/978-3-030-98519-6_18}

\bibitem[\textcolor{blue}{KK20}]{KK20}
    M. Kalini\'{c} and P. K\'{o}m\'{a}r,
    \emph{Denoising DNA encoded library screens
    with sparse learning},
    ACS Combinatorial Science,
    Vol. {\bf{22}}, no. 8, pp.410-421, 
    2020.

\bibitem[\textcolor{blue}{Kol56}]{Kol56}
    A. N. Kolmogorov, 
    \emph{Asymptotic characteristics of some completely 
    bounded metric spaces},
    Dokl. Akad. Nauk. SSSR, \textbf{108}, pp.585-589,
    1956.

\bibitem[\textcolor{blue}{Kuh11}]{Kuh11}
    T. K\"{u}hn, 
    \emph{Covering numbers of Gaussian reproducing 
    kernel Hilbert spaces}, 
    J. Complexity, \textbf{27}, pp.489-499, 2011.

\bibitem[\textcolor{blue}{LY07}]{LY07}
    Y. Lin and M. Yuan,
    \emph{On the Non-Negative Garrote Estimator},
    J. R. Stat. Soc. Ser. B {\bf{69}},
    pp. 143-161, 2007

\bibitem[\textcolor{blue}{LL16}]{LL16}
    W. Lee and T. Lyons,
    \emph{The Adaptive Patched Cubature Filter 
    and its Implementation},
    Communications in Mathematical Sciences,
    {\bf{14}}(3), pp.799-829, 2016.

\bibitem[\textcolor{blue}{LL99}]{LL99}
    W. V. Li and W. Linde, 
    \emph{Approximation, metric entropy and small ball 
    estimates for Gaussian measures},
    Ann. Probab., \textbf{27}, pp.1556-1578, 1999.
    
\bibitem[\textcolor{blue}{LL12}]{LL12}
    C. Litterer and T. Lyons,
    \emph{High order recombination and an 
    application to cubature on wiener space},
    The Annals of Applied Probability,
    Vol. {\bf{22}}, No. 4, 1301-1327,
    2012.

\bibitem[\textcolor{blue}{LMPY21}]{LMPY21}
    S. Liu, D. C. Mocanu, M. Pechenizkiy and L. Yin,
    \emph{Do we actually need dense over-parameterization? 
    in-time over-parameterization in sparse training},
    In Proceedings of the $39^{\text{th}}$ International 
    Conference on Machine Learning, pages 6989-7000, 
    PMLR 2021.

\bibitem[\textcolor{blue}{LP04}]{LP04}
    H. Luschgy and G. Pag\'{e}s,
    \emph{Sharp asymptotics of the Kolmogorov entropy 
    for Gaussian measures},
    J. Funct. Anal., \textbf{212}, pp.89-120, 2004.

\bibitem[\textcolor{blue}{LM22}]{LM22}
    Terry Lyons and Andrew D. McLeod,
    \emph{Signature Methods in Machine Learning},
    arXiv preprint, 2022.
    \url{https://arxiv.org/abs/2206.14674}
    
\bibitem[\textcolor{blue}{MM13}]{MM13}
    Y. Maday, O. Mula, 
    \emph{A generalized empirical interpolation method: 
    Application of reduced basis techniques to data 
    assimilation}, 
    F. Brezzi, P. Colli Franzone, U. Gianazza, 
    G. Gilardi (Eds.), Analysis and Numerics of 
    Partial Differential Equations, Vol. 4 of 
    Springer INdAM Series,
    Springer Milan, 2013, pp. 221–235.

\bibitem[\textcolor{blue}{MMPY15}]{MMPY15}
    Y. Maday, O. Mula, A. T. Patera and M. Yano,
    \emph{The Generalized Empirical Interpolation Method: 
    Stability Theory On Hilbert Spaces With An Application To 
    Stokes Equation},
    Comput. Methods Appl. Mech. Engrg. \textbf{287},
    310-334, 
    2015.

\bibitem[\textcolor{blue}{MMT14}]{MMT14}
    Y. Maday, O. Mula and G. Turinici,
    \emph{Convergence analysis of the Generalized 
    Empirical Interpolation Method},
    SIAM J. Numer. Anal.,
    {\bf{54(3)}} 1713-1731, 2014.
    
\bibitem[\textcolor{blue}{MNPP09}]{MNPP09}
    Y. Maday, N. C. Nguyen, A. T. Patera and 
    G. S. H. Pau,
    \emph{A General Multipurpose Interpolation
    Procedure: The Magic Points},
    Commun. Pure Appl. Anal., 
    {\bf{81}}, pp. 383-404, 
    2009.

\bibitem[\textcolor{blue}{MZ93}]{MZ93}
    S. G. Mallat and Z. Zhang,
    \emph{Matching Pursuits with Time-Frequency Dictionaries},
    IEEE Transactions on Signal Processing, 
    \textbf{12}: 3397-3415, 
    1993.

\bibitem[\textcolor{blue}{MRT12}]{MRT12}
    Mehryar Mohri, Afshin Rostamizadeh and Ameet 
    Talwalkar, 
    \emph{Foundations of Machine Learning},
    Massachusetts: MIT Press, 
    USA, 2012

\bibitem[\textcolor{blue}{NT09}]{NT09}
    D. Needell and J. A. Tropp, 
    \emph{CoSaMP: Iterative signal recovery from 
    incomplete and inaccurate samples},
    Applied and Computational Harmonic Analysis,
    \textbf{26(3)}: 301-321, 
    2009.

\bibitem[\textcolor{blue}{NPS22}]{NPS22}
    M. Nikdast, S. Pasricha and F. Sunny,
    \emph{SONIC: A Sparse Neural Network Inference 
    Accelerator with Silicon Photonics for 
    Energy-Efficient Deep Learning},
    27th Asia and South Pacific Design Automation 
    Conference (ASP-DAC), pp. 214-219, 2022.

\bibitem[\textcolor{blue}{NS21}]{NS21}
    S. Ninomiya and Y. Shinozaki, 
    \emph{On implementation of high-order recombination 
    and its application to weak approximations of 
    stochastic differential equations},
    In Proceedings of the NFA $29^{\text{th}}$ Annual Conference,
    2021.

\bibitem[\textcolor{blue}{PTT22}]{PTT22}
    Sebastian Pokutta, Ken'ichiro Tanaka and
    Kazuma Tsuji, 
    \emph{Sparser Kernel Herding with Pairwise
    Conditional Gradients without Swap Steps},
    \url{https://arxiv.org/abs/2110.12650},
    [math.OC], 8 February 2022.

\bibitem[\textcolor{blue}{Ree93}]{Ree93}
    R. Reed, 
    \emph{Pruning Algorithms - A Survey},
    IEEE Transactions on Neural Networks 
    \textbf{4}, p.74-747, 1993.
    
\bibitem[\textcolor{blue}{SS86}]{SS86}    
    Fadil Santosaf and William W. Symes,
    \emph{Linear Inversion of Band-Limited
    Reflection Seismograms},
    SIAM J. ScI. STAT. COMPUT.
    Vol. {\bf{7}}, No. 4, 
    1986.

\bibitem[\textcolor{blue}{SSW01}]{SSW01}
    B. Sch\"{o}lkopf, A. J. Smola and R. C. Williamson,
    \emph{Generalization performance of regularization 
    networks and support vector machines via entrop numbers 
    of compact operators},
    IEEE Trans. Inform. Theory, \textbf{47}, 
    pp.2516-2532, 2001.

\bibitem[\textcolor{blue}{Ste03}]{Ste03}
    I. Steinwart, 
    \emph{Entropy numbers of convex hulls and an 
    application to learning algorithms},
    Arch. Math., \textbf{80}, pp.310-318, 2003.

\bibitem[\textcolor{blue}{SS07}]{SS07}
    I. Steinwart and C. Scovel, 
    \emph{Fast rates for support vector machines using 
    Gaussian kernels},
    Ann. Statist. \textbf{35} (2), 2007.

\bibitem[\textcolor{blue}{SS12}]{SS12}
    I. Steinwart and C. Scovel, 
    \emph{Mercer's theorem on general domains:
    On the interaction between measures, kernels, and 
    RKHSs},
    Constructive Approximation,
    \textbf{35}(3):363-417, 2012.

\bibitem[\textcolor{blue}{Str71}]{Str71}
    A. H. Stroud,
    \emph{Approximate calculation of multiple 
    integrals},
    Series in Automatic Computation, 
    Englewood Cliffs, NJ: Prentice-Hall, 
    1971.

\bibitem[\textcolor{blue}{Suz18}]{Suz18}
    T. Suzuki, 
    \emph{Fast Learning Rate of Non-Sparse Multiple
    Kernel Learning and Optimal Regularization 
    Strategies},
    Electronic Journal of Statistics \textbf{12},
    p.2141-2192, 2018.

\bibitem[\textcolor{blue}{SS13}]{SS13}
    M. Sugiyama and T. Suzuki, 
    \emph{Fast Learning Rate of Multiple Kernel Learning:
    Tradeoff Between Sparsity and Smoothness},
    Annals of Statistics \textbf{41},
    p.1381-1405, 2013.

\bibitem[\textcolor{blue}{TT21}]{TT21}
    Ken'ichiro Tanaka and Kazuma Tsuji,
    \emph{Acceleration of the Kernel Herding
    Algorithm by Improved Gradient Approximation},
    \url{https://arxiv.org/abs/2105.07900},
    [math.NA], 17 May 2021. 

\bibitem[\textcolor{blue}{Tch15}]{Tch15}
    M. Tchernychova, 
    \emph{Carath\'{e}odry cubature measures},
    PhD thesis, 
    University of Oxford, 
    \url{https://ora.ox.ac.uk/objects/uuid:a3a10980-d35d-467b-b3c0-d10d2e491f2d}
    2015.
    
\bibitem[\textcolor{blue}{TW19}]{TW19}
    GL. Tian and M. Wang,
    \emph{Adaptive Group LASSO for High-Dimensional
    Generalized Linear Models},
    Stat Papers {\bf{60}}, pp. 1469-1486, 
    2019.

\bibitem[\textcolor{blue}{Tib96}]{Tib96}
    Robert Tibshirani,
    \emph{Regression Shrinkage and Selection 
    via the Lasso},
    Journal of the Royal Statistical Society.
    Series B (Methodological), Vol. {\bf{58}},
    No. 1, pp. 267-288, 1996.

\bibitem[\textcolor{blue}{Tro06}]{Tro06}
    J. A. Tropp, 
    \emph{Just relax: Convex programming methods for 
    identifying sparse signals in noise},
    IEEE Transactions on Information Theory,
    \textbf{52(3)}: 1030-1051, 
    2006.
    
\bibitem[\textcolor{blue}{Wel09a}]{Wel09a}
    M. Welling,
    \emph{Herding Dynamical Weights to Learn},
    In Proceedings of the 21st International 
    Conference on Machine Learning, Montreal,
    Quebec, CAN, 2009.
    
\bibitem[\textcolor{blue}{Wel09b}]{Wel09b}
    M. Welling, 
    \emph{Herding Dynamic Weights for Partially 
    Observed Random Field Models}, 
    In Proc. of the Conf. on
    Uncertainty in Artificial Intelligence, 
    Montreal, Quebec, CAN, 2009.
    
\bibitem[\textcolor{blue}{XZ16}]{XZ16}
    Y. Xiang and C. Zhang,
    \emph{On the Oracle Property of Adaptive
    Group LASSO in High-Dimensional Linear Models},
    Stat. Pap. {\bf{57}}, pp. 249-265, 
    2016.

\bibitem[\textcolor{blue}{Zho02}]{Zho02}
    D. -X. Zhou, 
    \emph{The covering number in learning theory},
    J. Complexity, \textbf{18}, pp.739-767, 2002.

\end{thebibliography}
\end{document}